\newcommand*{\rom}[1]{\expandafter\@slowromancap\romannumeral #1@}
\newcommand{\RNum}[1]{\uppercase\expandafter{\romannumeral #1\relax}}
\newcommand{\N}{\mathbb{N}}  
\newcommand{\Z}{\mathbb{Z}}  
\newcommand{\R}{\mathbb{R}}  
\newcommand{\Q}{\mathbb{Q}}
\newcommand{\acts}{\curvearrowright}
\newcommand{\Rat}{\mathrm{Rat}}
\newcommand{\Eig}{\mathrm{Eig}}
\newcommand{\denom}{\mathrm{denom}}
\newcommand{\lcm}{\mathrm{lcm}}
\newcommand\labs[1]{\left\lvert#1\right\rvert} 
\newcommand\norm[1]{\lVert#1\rVert}
\newcommand\lnorm[1]{\left\lVert#1\right\rVert}
\newcommand{\T}{\mathbb{T}}
\theoremstyle{plain} 
\newtheorem{thm}{Theorem}[section]
\newtheorem{lem}[thm]{Lemma}
\newtheorem{pro}[thm]{Proposition}
\newtheorem{qu}{Question}
\newtheorem{thmAlph}{Theorem}[section]
\theoremstyle{definition} 
\newtheorem{exam}[thm]{Example}
\newtheorem{defn}[thm]{Definition}
\theoremstyle{remark} 
\newtheorem*{ack}{Acknowledgements}
\title{Quantitative Expansivity for ergodic $\Z^d$-actions}
\author{Alexander Fish and Sean Skinner}
\address{School of Mathematics and Statistics, University of Sydney, Australia}
\curraddr{}
\email{alexander.fish@sydney.edu.au}
\email{sean.skinner@sydney.edu.au}
\thanks{}
\date{\today}                                           
\begin{document}
\begin{abstract}
We study expansiveness properties of positive measure subsets of ergodic $\Z^d$-actions along two different types of structured subsets of $\Z^d$, namely, cyclic subgroups and images of integer polynomials. We prove quantitative expansiveness properties in both cases and strengthen combinatorial results obtained by Björklund and Fish in \cite{BjF} and Bulinski and Fish in \cite{BuF}.
Our methods unify and strengthen earlier approaches used in \cite{BjF} and \cite{BuF} and to our surprise, also yield a counterexample to a certain pinned variant of the polynomial Bogolyubov theorem.
\end{abstract}

\maketitle
\section{\textbf{Introduction}}
An influential result of Furstenberg, Katznelson and Weiss \cite{FKW} states that if $A\subset \R^2$ has positive upper density with respect to the Lebesgue measure $m$, i.e.
\[ \lim_{N\to \infty} \frac{m(A\cap [-N,N]^2)}{m([-N,N]^2)}>0,\]
then the set of all distances between pairs of points in $A$ satisfies 
\[[m_0,\infty) \subset \{|x-y|\, : \, x,y\in A\}\]
for some $m_0=m_0(A)>0$. In \cite{Mag} Magyar established a discrete analogue of this result for sets of positive upper Banach density in $\Z^d$. Recall that the upper Banach density of a set $E\subset \Z^d$ is defined to be
\begin{equation*}
d^*(E):= \lim_{N\to \infty} \sup_{t \in \Z^d} \frac{|E\cap (Q_N+t)|}{|Q_N|}
\end{equation*}
where $Q_N:= [-N,N]^d \cap \Z^d$.
\begin{thm}[Quantitative distances \cite{Mag}]\label{Theorem: Unpinned distances}
Let $d \geq 5$ be a positive integer. Then for all $E\subset \Z^d$ with $d^*(E)>0$ there exist some positive integers $k=k(d^*(E))$ and $m_0 = m_0(E)$ such that
\begin{equation*}
km \in \{|x-y|^2 \, : \, x,y\in E\} \quad \text{for all integers }m\geq m_0.
\end{equation*}
\end{thm}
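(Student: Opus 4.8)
\smallskip
\noindent\emph{Sketch of the intended approach.}
Throughout write $e(t):=e^{2\pi i t}$ and $S_\lambda:=\{v\in\Z^d:|v|^2=\lambda\}$. The plan is to transfer the statement to ergodic theory by the Furstenberg correspondence principle and then study the averaging operators $A_\lambda f:=\frac1{|S_\lambda|}\sum_{v\in S_\lambda}f\circ T^v$ along discrete spheres via the circle method for sums of $d\ge 5$ squares, organised through the spectral theorem. By correspondence one attaches to $E$ a measure-preserving $\Z^d$-system $(X,\mathcal B,\mu,(T^v)_v)$ and a set $A\in\mathcal B$ with $\mu(A)=\delta:=d^*(E)$ such that $\mu(A\cap T^{-v}A)>0$ forces $|v|^2\in\{|x-y|^2:x,y\in E\}$. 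Since $\langle 1_A,A_\lambda 1_A\rangle=\frac1{|S_\lambda|}\sum_{v\in S_\lambda}\mu(A\cap T^{-v}A)$, the theorem follows once we produce $k=k(\delta)$ and $m_0=m_0(E)$ with $\langle 1_A,A_{km}1_A\rangle_{L^2(\mu)}>0$ for every $m\ge m_0$.

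Applying the spectral theorem to the Koopman representation gives $\langle 1_A,A_\lambda 1_A\rangle=\int_{\T^d}\widehat{\sigma_\lambda}(\theta)\,d\nu(\theta)$, where $\widehat{\sigma_\lambda}(\theta)=\frac1{|S_\lambda|}\sum_{v\in S_\lambda}e(v\cdot\theta)$ is the Fourier transform of the normalised surface measure on $S_\lambda$ and $\nu$ is the fixed positive finite spectral measure of $1_A$, with $\nu(\T^d)=\|1_A\|_2^2=\delta$ and, by Cauchy--Schwarz, $\nu(\{0\})=\|P_{\mathrm{inv}}1_A\|_2^2\ge\langle 1_A,1\rangle^2=\delta^2$. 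So everything comes down to understanding $\widehat{\sigma_\lambda}$, and this is precisely where $d\ge 5$ is used: for a parameter $Q$ the circle method furnishes a splitting $\widehat{\sigma_\lambda}=\mathfrak M_{Q,\lambda}+\mathfrak m_{Q,\lambda}$ in which $\|\mathfrak m_{Q,\lambda}\|_\infty\ll_{\eps} Q^{-c}+\lambda^{-c}$ for some $c=c(d)>0$ (both the minor-arc power saving and the convergence of the singular series require $d\ge 5$), while $\mathfrak M_{Q,\lambda}$ is a finite sum of bumps sitting at the rationals $a/q\in\T^d$ of denominator $q\le Q$, each of height a normalised quadratic Gauss sum $G(a,q,\lambda)$ and of width tending to $0$ as $\lambda\to\infty$.

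Now choose $k$ divisible by a suitable power of $\lcm(1,\dots,Q)$, which freezes every $G(a,q,km)$ as $m$ varies. Feeding the splitting into the spectral integral and letting $m\to\infty$ with $Q$ fixed, the term $\mathfrak m_{Q,\lambda}$ contributes $O(Q^{-c}\delta)+o_m(1)$, the shrinking bumps annihilate the continuous part of $\nu$ together with all atoms of denominator exceeding $Q$, and what remains converges to $\langle\bar g_Q,M_Q\bar g_Q\rangle$, where $\bar g_Q=\mathbb E[1_A\mid\mathcal F_Q]\ge 0$ is the conditional expectation onto the finite Kronecker-type factor $\mathcal F_Q$ carrying the eigenvalues of denominator $\le Q$, and $M_Q=\sum_b\mu_Q(b)R_b$ is the Markov operator driven by the limiting distribution $\mu_Q$ of $S_{km}\bmod\lcm(1,\dots,Q)$. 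Since $M_Q$ is a \emph{non-negative} combination of translation operators and $\bar g_Q\ge 0$, we get $\langle\bar g_Q,M_Q\bar g_Q\rangle=\sum_b\mu_Q(b)\int\bar g_Q(y)\bar g_Q(y+b)\,dy\ge 0$, and in fact $\ge c(\delta)>0$; this positivity, uniform in the increasing parameter $Q$, is the dynamical incarnation of the classical lower bound on Magyar's singular series. Taking $Q=Q(\delta)$, hence $k=k(\delta)$, large enough that $O(Q^{-c}\delta)<\tfrac12c(\delta)$ yields $\langle 1_A,A_{km}1_A\rangle\ge\tfrac12c(\delta)-o_m(1)>0$ for all $m\ge m_0(E)$, with the quantitative lower bound that the title of the paper advertises.

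I expect the main obstacle to be exactly the analysis underlying the previous paragraph: proving the circle-method splitting with an error bound uniform over all $\lambda$ in the chosen progression, handling cleanly the passage between major, intermediate and minor arcs, and --- hardest of all --- showing that the local term $\langle\bar g_Q,M_Q\bar g_Q\rangle$ is bounded below by a positive constant depending only on $\delta$ that does not deteriorate as $Q$ is forced to grow (it must grow in order to beat the error term). This is the point where Magyar's estimates on the Fourier transform of the discrete sphere, and the hypothesis $d\ge 5$, are indispensable; the correspondence step, the spectral decomposition, and the elementary manipulations above are comparatively routine.
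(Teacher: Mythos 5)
This statement is Magyar's theorem, quoted from \cite{Mag} purely as motivation; the paper contains no proof of it, so there is nothing internal to compare your argument against. Judged on its own terms, your sketch follows the standard route (Furstenberg correspondence, spectral measure of $1_A$, Magyar's circle-method decomposition of $\widehat{\sigma_\lambda}$ for $d\ge 5$, and the trick of taking $k$ divisible by $\lcm(1,\dots,Q)$ to freeze the Gauss sums), and you correctly locate where $d\ge5$ enters. But the argument has a genuine gap exactly at the step you flag: the lower bound $\langle\bar g_Q,M_Q\bar g_Q\rangle\ge c(\delta)>0$ \emph{uniformly in $Q$}. What you actually prove is only nonnegativity: each term $\mu_Q(b)\int\bar g_Q(y)\bar g_Q(y+b)\,dy$ is $\ge 0$ because $\bar g_Q\ge 0$, but this is compatible with the whole sum being arbitrarily small. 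The naive way to extract positivity, $\langle\bar g_Q,M_Q\bar g_Q\rangle\ge\mu_Q(0)\,\|\bar g_Q\|_2^2\ge\mu_Q(0)\,\delta^2$, fails to be uniform: the proportion $\mu_Q(0)$ of points of $S_{km}$ lying in the residue class $0$ modulo $\lcm(1,\dots,Q)$ decays like a negative power of that modulus, so it tends to $0$ precisely as you are forced to enlarge $Q$ to beat the minor-arc error $O(Q^{-c})$. The two requirements therefore pull in opposite directions, and the sketch as written does not reconcile them.

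The missing ingredient is Magyar's actual treatment of the main term: one shows that the full major-arc multiplier (summed over all $a/q$ with $q\le Q$) is, up to a small error, the Fourier transform of a \emph{probability-like} measure whose total mass is the truncated singular series, and that for $d\ge5$ (no local obstructions for five or more squares) this singular series is bounded below by a constant $c(d)>0$ independent of $Q$ and of $\lambda$ in the chosen progression. The contribution of the atom of $\nu$ at $0$ then already yields $\ge c(d)\,\delta^2$, and the remaining major-arc contributions are controlled so as not to cancel it. Without this input --- which is the analytic heart of \cite{Mag}, not a routine consequence of positivity of $\bar g_Q$ --- the conclusion $\langle 1_A,A_{km}1_A\rangle>0$ does not follow. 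So the proposal is a correct map of the terrain but not yet a proof.
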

The term \textit{quantitative} in the title of Theorem \ref{Theorem: Unpinned distances} refers to the fact that the integer $k$ depends only on $d^*(E)$ and not on the set $E$ itself. In \cite{LyMag} Lyall and Magyar went on to prove a strengthened, \emph{pinned} variant of Theorem \ref{Theorem: Unpinned distances}.
\begin{thm}[Quantitative pinned distances \cite{LyMag}] \label{Theorem: Pinned distances}
Let $d \geq 5$ be a positive integer. Then for all $E\subset \Z^d$ with $d^*(E)>0$ there exists some positive integers $k=k(d^*(E))$ and $m_0 = m_0(E)$ such that for every $m_1\geq m_0$ there exists a fixed point $x \in E$ such that
\begin{equation*}
km \in \{|x-y|^2 \, : \, y\in E\} \quad \text{for all integers } m_0\leq m \leq m_1.
\end{equation*}
\end{thm}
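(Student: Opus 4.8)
The plan is to deduce Theorem \ref{Theorem: Pinned distances} from the quantitative expansivity results for ergodic $\Z^d$-actions that this paper develops, via the Furstenberg correspondence principle. First I would pass from the combinatorial setting to the dynamical one: given $E \subset \Z^d$ with $d^*(E) > 0$, the correspondence principle produces an ergodic (or at least a general, which one then decomposes into ergodic components) measure-preserving $\Z^d$-system $(X, \mathcal{B}, \mu, T)$ and a set $A \in \mathcal{B}$ with $\mu(A) \geq d^*(E)$ such that, for any finite collection of vectors $v_1, \dots, v_n \in \Z^d$, the density $d^*\bigl(\bigcap_i (E - v_i)\bigr)$ is bounded below by $\mu\bigl(\bigcap_i T^{-v_i} A\bigr)$. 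Here the relevant vectors are the ``distance spheres'' $\{v \in \Z^d : |v|^2 = km\}$ — so I would reformulate the target as a statement about the non-emptiness (indeed, positivity of measure) of the return-set $A \cap \bigcap_{v : |v|^2 = km} \dots$, or rather about the existence of many $v$ on each admissible sphere for which $\mu(A \cap T^{-v} A) > 0$, uniformly in the radius.

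The heart of the matter is the quantitative expansivity along polynomial images — here the relevant polynomial map is $v \mapsto |v|^2$ from $\Z^d$ to $\Z$, whose image (for $d \geq 4$, say, with $d \geq 5$ giving the clean Gauss-circle/Kloosterman bounds Magyar uses) contains a full arithmetic progression $k\N$ past some point. I would invoke the paper's main polynomial expansivity theorem to get: for any $A$ with $\mu(A) > 0$, there is $k = k(\mu(A))$ such that for all large enough $m$, the set of $v$ on the sphere $|v|^2 = km$ with $\mu(A \cap T^{-v}A) > c(\mu(A)) > 0$ has proportion bounded below on that sphere. The \emph{pinning} is the extra twist: rather than just concluding $km \in \{|x - y|^2 : x, y \in E\}$ for each individual large $m$, one wants a single $x$ that works simultaneously for the whole window $m_0 \le m \le m_1$. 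Dynamically this corresponds to finding a point $x$ (equivalently a positive-measure set of points) lying in $A \cap \bigcap_{m_0 \le m \le m_1}\bigcup_{|v|^2 = km} T^{-v} A$; the key input is that the ``bad'' event for a given $m$ — namely $x \in A$ but $x \notin T^{-v}A$ for \emph{every} $v$ on the $km$-sphere — has measure controlled via an $L^2$ / spectral estimate on the spherical averaging operators, so that a union bound (or rather a second-moment argument, exploiting that the spherical averages converge to the ergodic average $\mu(A)$ with a quantitative rate coming from the equidistribution of lattice points on spheres) keeps the total bad measure below $\mu(A)$ no matter how large the window $[m_0, m_1]$ is, once $m_0$ is chosen large depending on $E$.

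Concretely the steps in order: (1) apply Furstenberg correspondence to reduce to the ergodic dynamical statement; (2) invoke the quantitative polynomial expansivity theorem of this paper for the squaring/norm-form map to get the proportion lower bound $\mu(A \cap T^{-v}A) \geq c$ for a positive density of $v$ on each large admissible sphere, with $k$ and $c$ depending only on $\mu(A)$; (3) upgrade to the pinned version by estimating, for each $m$, the measure of the set of $x \in A$ failing to have \emph{any} partner on the $km$-sphere, showing this is summable (or small in aggregate) over $m \geq m_0$ by a quantitative ergodic theorem / spectral gap estimate for the normalized spherical averaging operators $\frac{1}{r(km)}\sum_{|v|^2 = km} T^v$ acting on $L^2(\mu)$ — the decay coming from Kloosterman-sum bounds on the Fourier coefficients of the measure on the sphere; (4) intersect over the window $m_0 \le m \le m_1$ to get a positive-measure set of good $x$, pick such an $x \in X$, and transfer back via correspondence to get the desired $x \in E$. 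The main obstacle I anticipate is step (3): getting the error terms in the quantitative mean ergodic theorem for spherical averages to be summable over the radius (rather than merely tending to zero for each fixed radius), which is exactly where the dimension restriction $d \geq 5$ and the arithmetic of representations of integers by sums of squares enter; one has to be careful that the ``local'' obstructions (congruence conditions on $km$, which is why $k$ must be allowed to absorb a fixed modulus) do not accumulate, and that the single pinned point survives the intersection over an arbitrarily long window — this is handled by noting the bound on the bad set for radius $m$ decays fast enough in $m$ that $\sum_{m \ge m_0}(\text{bad measure}) < \mu(A)$ for $m_0$ large.
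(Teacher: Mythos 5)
This theorem is not proved in the paper at all: it is quoted verbatim from Lyall--Magyar \cite{LyMag} as background motivation, so there is no internal proof to compare against, and the actual argument is the Fourier-analytic one of \cite{LyMag} (quantitative $L^2$ bounds for the Magyar--Stein--Wainger spherical averaging operators, using Kloosterman-sum estimates, which is where $d\geq 5$ enters). Your plan to rederive it from this paper's expansivity machinery has two concrete obstructions. First, a mismatch of shape: Theorem \ref{Theorem: Quant Poly Exp} controls unions of the form $\bigcup_{n} T^{P(n)}A$, which via correspondence yields sumset statements such as $k\Z \subset E-E+P(E-E)$; the distance theorem instead asserts that the \emph{level set} $\{v \in E-E : |v|^2 = km\}$ is nonempty for each admissible $m$, i.e.\ it constrains the difference vector itself to lie on a prescribed sphere. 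The map $v\mapsto |v|^2$ appearing here is not in the role of the polynomial $P$ in Theorem \ref{Theorem: Quant Poly Exp} (where $P$ parametrises which group elements act), and no statement in the paper localises $\mu(A\cap T^vA)>0$ to spheres $|v|^2=km$.

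Second, and more seriously, your step (3)--(4) --- summing the ``bad'' measures over the window $m_0\leq m\leq m_1$ and intersecting --- is exactly the kind of upgrade that this paper demonstrates \emph{cannot} be extracted from its expansivity framework. The expansivity theorems only give mass $>1-\varepsilon$ with $\varepsilon>0$, and Section \ref{Section: Counter examples} constructs systems where the $\varepsilon=0$ versions fail; Theorem \ref{Theorem: Counter-example to pinned Bog} then shows the analogous pinned statement in the polynomial Bogolyubov setting is actually \emph{false}. So the pinning in Theorem \ref{Theorem: Pinned distances} genuinely requires the quantitative decay of the spherical-average error terms in $m$ (summability over the radius), an input specific to the arithmetic of sums of five or more squares and entirely absent from the ergodic-theoretic toolkit developed here. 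Your sketch correctly identifies this as the main obstacle but does not supply it, and it cannot be supplied by citing this paper's results.
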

In a series of works by Björklund, Bulinski and Fish \cite{BjF,BuF,BjBu}, it was realised that similar results hold if one replaces the squared Euclidean distance with other functions. We will focus on two of these results.
\begin{thm}[Quantitative polynomial Bogolyubov theorem \cite{BuF}]\label{Theorem: Bogolyubov}
Let $P:\Z \to \Z$ be an integer polynomial with zero constant term and $\mathrm{deg}(P)\geq 2$. Then for every $\delta>0$ there exists a positive integer $k_0=k_0(P,\delta)$ such that the following holds. For every $E\subset \Z$ with upper Banach density $d^*(E)\geq \delta$ there exists a positive integer $k\leq k_0$ with
\[k\Z \subset E-E + P(E-E).\]
\end{thm}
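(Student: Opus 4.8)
The plan is to pass to the ergodic setting via Furstenberg's correspondence principle and then extract the desired configuration from a polynomial ergodic average studied spectrally. Given $E\subset\Z$ with $d^*(E)\geq\delta$, the correspondence principle provides an ergodic measure-preserving system $(X,\mu,T)$ and $A\subset X$ with $\mu(A)\geq\delta$ such that positivity of $\mu(A\cap T^{-n}A)$ implies $n\in E-E$. Hence it suffices to produce $k\leq k_0(P,\delta)$ such that for every $j\in\Z$ there exists $v\in\Z$ with $\mu(A\cap T^{-v}A)>0$ and $\mu\bigl(A\cap T^{-(kj-P(v))}A\bigr)>0$; writing $v=c-d$ and $kj-P(v)=a-b$ with $a,b,c,d\in E$ then yields $kj\in (E-E)+P(E-E)$.

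To locate such a $v$ I would examine, for a target $t\in\Z$, the nonnegative average
\[
\Psi_N(t)\ :=\ \frac1{|Q_N|}\sum_{v\in Q_N}\mu(A\cap T^{-v}A)\,\mu\bigl(A\cap T^{-(t-P(v))}A\bigr),
\]
and observe that $\Psi_N(t)>0$ for a single $N$ already exhibits the required $v$. Writing $c(n):=\mu(A\cap T^{-n}A)=\widehat\sigma(n)$ for the spectral measure $\sigma\geq0$ of $1_A$ (so $\sigma(\T)=\mu(A)$ and, by ergodicity, $\sigma(\{0\})=\mu(A)^2$), one expands $\Psi_N(t)$ as an integral against $\sigma\otimes\sigma$ of the weight $|Q_N|^{-1}\sum_{v\in Q_N}e\bigl(v\alpha-P(v)\beta\bigr)$, where $e(x)=e^{2\pi ix}$. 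Since $\deg P\geq 2$, this weight converges, as $N\to\infty$, to a limit $W_P(\alpha,\beta)$ that vanishes unless both $\alpha$ and $\beta$ are rational (Weyl equidistribution applied to the $v$-polynomial $v\alpha-P(v)\beta$), and moreover vanishes unless $\denom(\alpha)\mid\denom(\beta)$ (grouping $v$ modulo $\denom(\beta)$). Thus
\[
\lim_{N\to\infty}\Psi_N(t)\ =\ \sum_{\alpha,\beta\in\Q\cap[0,1)}e(t\beta)\,W_P(\alpha,\beta)\,\sigma(\{\alpha\})\,\sigma(\{\beta\}),
\]
a sum over the at most countably many rational atoms of $\sigma$, with main term $\sigma(\{0\})^2=\mu(A)^4\geq\delta^4$ coming from $\alpha=\beta=0$.

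The remaining task is to choose $k$ so that the off-diagonal contribution stays below $\tfrac12\delta^4$. Here the degree-$\geq2$ hypothesis enters quantitatively through Weyl's inequality for complete exponential sums: $|W_P(\alpha,\beta)|\leq C_P\,\denom(\beta)^{-c_P}$ for constants $C_P,c_P>0$. Consequently the total contribution of atoms $\beta$ with $\denom(\beta)\geq Q_0$ is at most $C_P\,\mu(A)^2\,Q_0^{-c_P}$, which one makes $<\tfrac14\delta^4$ by choosing $Q_0=Q_0(P,\delta)$; I would then set $k:=\lcm\{q:1\leq q<Q_0\}$, depending only on $P$ and $\delta$. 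For $t=kj$ the surviving atoms, having denominator dividing $k$, satisfy $e(t\beta)=1$, so the $j$-dependence disappears; the contribution from $\beta=0$ is exactly $\mu(A)^4$ (since $\denom(\alpha)\mid\denom(\beta)$ then forces $\alpha=0$); and one is left with a \emph{finite} sum of off-diagonal terms $W_P(\alpha,\beta)\,\sigma(\{\alpha\})\,\sigma(\{\beta\})$ with $\beta\neq0$, ranging over the bounded-denominator rational atoms of $\sigma$.

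Ruling out destructive cancellation in this last finite sum, with a bound uniform in the system (depending only on $\mu(A)\geq\delta$, not on the fine structure of $\sigma$), is the main obstacle, and it is precisely here that a quantitative expansivity statement is required: since crude term-by-term estimates on $\sum_{\alpha,\beta\neq0}|W_P|\,\sigma(\{\alpha\})\,\sigma(\{\beta\})$ are too lossy when $\mu(A)$ is small, one must instead exploit the structure of the rational part of the system — organizing the atoms into the few heavy ones (there are at most $\mu(A)/\eta$ of mass $\geq\eta$) and a negligible light remainder, and controlling the joint distribution of $\bigl(v,P(v)\bigr)$ modulo all relevant periods simultaneously — to show that the rational Kronecker data witnessed by $A$ behaves like a bounded-period rotation once the mixing effect of $P$ is accounted for. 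I expect this step to absorb the bulk of the technical work, after which $\lim_N\Psi_N(kj)\geq c(P,\delta)>0$ for every $j$, and the theorem follows.
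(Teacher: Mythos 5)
Your setup (correspondence principle, the average $\Psi_N(t)$, spectral expansion, Weyl/Hua to kill irrational spectrum and high-denominator rationals, and taking $k$ to be an lcm of small denominators so that $e(kj\beta)=1$) follows the same spectral skeleton as the argument in \cite{BuF} and in this paper's Proposition \ref{Proposition: Small Rat(M) gives polynomial conclusion}. But the proof is not complete: the step you defer as ``the main obstacle'' --- bounding the finite sum of off-diagonal terms $W_P(\alpha,\beta)\,\sigma(\{\alpha\})\,\sigma(\{\beta\})$ over the low-denominator atoms with $\beta\neq 0$ --- is precisely the entire difficulty of the theorem, and no term-by-term or ``heavy atoms versus light remainder'' estimate can close it. The trivial bound on that sum is of order $\mu(A)^2$ (since $\sum_\alpha\sigma(\{\alpha\})\leq\mu(A)$), which overwhelms the main term $\mu(A)^4$ whenever $\mu(A)$ is small, and the example of a finite rotation $X=\Z/q\Z$ with $A$ a single point shows that the off-diagonal rational mass really can be comparable to the total mass, so there is no hidden positivity to exploit on the original system.

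The paper (following \cite{BuF}) resolves exactly this point not by estimating the off-diagonal sum but by eliminating it: if $\sigma(\Rat(M))$ is not already small compared to $\mu(A)^2\varepsilon^2$, Lemma \ref{Lemma: Increment} produces a $T^k$-ergodic component $\nu$ with $\nu(A)^2\geq\mu(A)^2+\sigma(\Rat(M))$ --- the low-denominator eigenfunctions become invariant for the sub-action and their spectral mass is absorbed into the new ``main term'' $\nu(A)^2$. One then restarts the spectral argument on $(X,\nu,T^k)$ with the rescaled polynomial $P(kn)/k$; since $\nu(A)\leq 1$, this measure increment can only happen a bounded number of times (bounded in terms of $\delta,\varepsilon,P$), after which the hypothesis $\sigma(\Rat(M))<\mu(A)^2\varepsilon^2/4$ of Proposition \ref{Proposition: Small Rat(M) gives polynomial conclusion} holds and the crude Cauchy--Schwarz/Bessel bound you already have does suffice. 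This iteration is also what makes the final $k$ depend only on $P$ and $\delta$. Without some such mechanism (or another genuinely new idea for the cancellation), your argument does not yield the theorem; I would encourage you to replace the last paragraph with a measure increment over $T^k$-ergodic components.
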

\begin{thm}[Non-quantitative simplicies \cite{BjF}]\label{Theorem: Non quant Triangles}
Let $d\geq 2$ be an integer. For every $E\subset \Z^d$ with upper Banach density $d^*(E)>0$ there exists some positive integer $k=k(E)$ such that the set of all signed volumes of $d$-simplicies whose vertices are in $E$ contains the set $k\Z$.
\end{thm}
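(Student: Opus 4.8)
We sketch how one could prove Theorem~\ref{Theorem: Non quant Triangles} by ergodic methods, in the spirit of this paper.

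\emph{Step 1: correspondence.} First I would apply the Furstenberg correspondence principle, in its ergodic form, to produce a measure-preserving ergodic $\Z^d$-system $(X,\mathcal B,\mu,T)$ and a set $A\in\mathcal B$ with $\mu(A)=d^*(E)>0$, such that whenever $w_1,\dots,w_d\in\Z^d$ satisfy $\mu\big(A\cap T^{-w_1}A\cap\dots\cap T^{-w_d}A\big)>0$ there is a base vertex $v_0\in\Z^d$ with $v_0,v_0+w_1,\dots,v_0+w_d\in E$. The signed volume of that simplex is, up to the fixed factor $d!$, the integer $\det[w_1\,|\cdots|\,w_d]$, and it is multilinear in the edge vectors; so (rescaling $k$ to absorb $d!$) the theorem reduces to finding $k\in\N$ such that every $N\in k\Z$ equals $\det[w_1|\cdots|w_d]$ for some $d$-tuple with positive $(d{+}1)$-fold intersection.

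\emph{Step 2: fixing all but one edge.} The plan is to choose $d-1$ edge vectors once and for all. Fix any primitive linear functional $\ell_0\colon\Z^d\to\Z$ and a basis $u_1,\dots,u_{d-1}$ of the saturated rank-$(d-1)$ lattice $\ker\ell_0$; running Poincaré recurrence successively along $u_1,\dots,u_{d-1}$ yields positive integers $m_1,\dots,m_{d-1}$ with $B:=A\cap T^{-m_1u_1}A\cap\dots\cap T^{-m_{d-1}u_{d-1}}A$ of positive measure. With $w_i=m_iu_i$ for $i<d$, the signed volume becomes $\pm\big(\prod_{i<d}m_i\big)\ell_0(w_d)$, and Step~1 only asks that, for every $N$ in a suitable progression, there be some $w_d$ in the coset $\ell_0^{-1}(N)$ with $\mu(B\cap T^{-w_d}A)>0$. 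That coset is a translate of $\ker\ell_0$, of rank $d-1\ge 1$ --- this is the single point at which the hypothesis $d\ge 2$ is used, and it is what gives room to move inside each fibre. Averaging $\mu(B\cap T^{-w_d}A)$ over $w_d\in\ell_0^{-1}(N)$ by the mean ergodic theorem for the $\Z^{d-1}$-subaction generated by $\ker\ell_0$, the existence of a good $w_d$ follows from positivity of the single $\Z$-correlation
\[
\langle g_B,\,S^{-N}g_A\rangle_{L^2(Y)}>0,
\]
where $(Y,\nu,S)$ is the ergodic quotient $\Z$-system of $X$ modulo the $\ker\ell_0$-invariant $\sigma$-algebra $\mathcal I_{\ker\ell_0}$ (with $S$ induced by any $T^u$, $\ell_0(u)=1$) and $g_B\le g_A$ are the images of $\mathbb E(\mathbf 1_B\mid\mathcal I_{\ker\ell_0})$ and $\mathbb E(\mathbf 1_A\mid\mathcal I_{\ker\ell_0})$, nonnegative with $\int g_B\,d\nu>0$.

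\emph{Step 3: handling the spectrum.} It remains to make the $\Z$-correlation positive on a whole progression $k'\Z$. The eigenvalues of $Y$ are precisely those eigenvalues $\chi$ of $(X,\mu,T)$ that are trivial on $\ker\ell_0$, i.e. of the form $\chi=\zeta^{\ell_0(\cdot)}$ for some $\zeta\in\T$. Since the point spectrum of $X$ is countable and each infinite-order eigenvalue lies on only finitely many of the one-parameter subtori $\{\zeta^{\ell_0(\cdot)}:\zeta\in\T\}$, a generic choice of the primitive functional $\ell_0$ --- and again $d\ge 2$ is what lets $\ell_0$ range over infinitely many primitives --- forces every such $\zeta$ to be a root of unity, so that $Y$ is an inverse limit of finite rotations. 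For such $Y$ one gets $\langle g_B,S^{-N}g_A\rangle>0$ for every $N$ in a suitable $k'\Z$: approximate $\operatorname{supp}g_B$ by a cylinder set invariant under $S^{k'\Z}$ and use $g_B\le g_A$. Finally the torsion part of the point spectrum of $X$ changes the eventual modulus only by a bounded multiple, as one checks by passing through the finite rotation factor of $(X,\mu,T)$; that some such multiple is genuinely needed is already seen by $E=q\Z^d$. Assembling these inputs yields a single $k=k(E)$.

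\emph{Main obstacle.} The substantive point is the generic choice of Step~3: producing one primitive direction $\ell_0$ for which the induced $\Z$-quotient of $X$ carries only rational spectrum while the transverse $\Z^{d-1}$-subaction $\ker\ell_0$ remains rich enough to reach every fibre $\ell_0^{-1}(N)$. Equivalently, this is a geometry-of-numbers statement --- a Bohr set in $\Z^d$ with $d\ge 2$ contains $d$-tuples realising every prescribed determinant, up to a fixed modulus --- and it is exactly the expansivity phenomenon that the present paper isolates and makes quantitative. For $d=1$ this freedom is absent and the statement indeed fails there; the hypothesis $d\ge 2$ enters precisely at this step. The remaining ingredients (correspondence, iterated Poincaré recurrence, the mean ergodic theorem) are routine, and combining them with Step~3 completes the argument.
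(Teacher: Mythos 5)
Your Steps 1 and 2 are sound, and you have correctly located the crux in Step 3. But the resolution you propose there does not work. You want a ``generic'' primitive functional $\ell_0$ such that no irrational eigenvalue of $(X,\mu,T)$ lies in the annihilator of $\ker\ell_0$, so that the quotient $\Z$-system $Y$ has purely rational spectrum. It is true that a fixed irrational $\alpha\in\T^d$ lies in the annihilator of $\ker\ell_0$ for at most one primitive $\ell_0$ (two distinct hyperplane kernels span a finite-index sublattice, forcing $\alpha$ rational). However, the point spectrum is a countable group and the set of primitive functionals is also countable, so no counting argument produces a good $\ell_0$; indeed one can realise, as an ergodic $\Z^d$-rotation on a compact group, an eigenvalue group containing an irrational point of $(\ker\ell_0)^\perp$ for \emph{every} primitive $\ell_0$. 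For such a system every quotient $Y$ has irrational spectrum, and then $\langle g_B,S^{-N}g_A\rangle$ need not be positive on any full progression (the supports can simply miss each other for infinitely many $N$ in every residue class). The Bergelson--Ward example in Section 8 of this paper is exactly a system witnessing that this kind of ``full'' expansivity fails, so any argument that requires the fibre $\ell_0^{-1}(N)$ to be hit for \emph{every} $N$ in a progression, with the transverse data fixed in advance, cannot be completed as stated.

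The paper's route avoids this in two ways that your sketch is missing. First, it never tries to annihilate the irrational spectrum exactly: Lemma \ref{Lemma: Line with small irrational mass} only makes the spectral mass $\sigma(L_v^\perp\setminus\{0\})$ small ($\le\gamma$) by averaging the bound $\sum_{v\in H_M}1_{L_v}\le d-1$ over a haystack against $\sigma$, yielding expansivity $>1-\varepsilon$ rather than $=1$ (Proposition \ref{Proposition: Small Rat(M) gives directional conclusion} together with Lemma \ref{Lemma: Directional orbit bounded from below by spectral mass of line}). Second, the residual rational spectral mass of small denominator is removed not by passing to a rational-spectrum factor but by the measure increment over $T^k$-ergodic components (Lemma \ref{Lemma: Increment}), iterated finitely often. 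The combinatorial deduction is then also arranged differently from yours: one direction $v$ is ``thickened'' into the orbit union $\bigcup_nT^{nv+kv_i}A$ of measure $>1-\varepsilon$, the remaining $d-1$ edge vectors $v_1,\dots,v_{d-1}$ are \emph{arbitrary}, and the ``for all $N$'' quantifier is discharged by the unimodular completion of $v$, not by hitting every fibre of a fixed functional. So the gap is genuine: your Step 3 needs to be replaced by the $\varepsilon$-expansivity plus ergodic-component machinery rather than a generic choice of $\ell_0$.
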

Three natural questions arise. Firstly, does a quantitative version of Theorem \ref{Theorem: Non quant Triangles} hold? Secondly, does a pinned variant of Theorem \ref{Theorem: Bogolyubov} hold? Thirdly, does a pinned variant of Theorem \ref{Theorem: Non quant Triangles} hold?
There is some ambiguity in the phrase \emph{pinned variant}, so let us be more precise.
\begin{qu}\label{Question: Quant triangles}
Can one ensure that the integer $k$ in Theorem \ref{Theorem: Non quant Triangles} depends only on $d^*(E)$ and not the set $E$ itself.
\end{qu}
\begin{qu}\label{Question: Pinned polynomial Bogolyubov theorem}
Let $P:\Z \to \Z$ be an integer polynomial with $P(0)=0$ and $\mathrm{deg}(P)\geq 2$. Is it true that for every $E\subset \Z$ with $d^*(E)>0$ there exists some positive integer $k$ such that for every positive integer $m$ there exist some $x,y\in E$ such that
\[\{-km,-k(m-1),\ldots,k(m-1),km\}\subset E-x + P(E-y) ?\]
\end{qu}
\begin{qu}\label{Question: Pinned Triangles}
Let $d\geq 2$ be an integer and suppose $E\subset \Z^d$ has $d^*(E)> 0$. For a point $x\in E$ denote by $\mathrm{VolSpec}_d(E,x)$ the set of all signed volumes of $d$-simplicies with vertex set $V$ satisfying that $x\in V$ and that $V\subset E$. Must there exist some positive integer $k$ such that for every finite subset $F\subset \Z$ there exists a point $x\in E$ with
\[ kF \subset \mathrm{VolSpec}_d(E,x)?\]
\end{qu}
In this paper we show that the answer to Question \ref{Question: Quant triangles} is \emph{yes} and that the answer to Question \ref{Question: Pinned polynomial Bogolyubov theorem} is \emph{no}. Question \ref{Question: Pinned Triangles} remains open.

As is now routine in density Ramsey theory, our combinatorial results, i.e. those about positive density subsets of $\Z^d$, are obtained by first proving analogous recurrence statements in the context of measure preserving $\Z^d$-actions and then translating these dynamical statements into combinatorial statements via the means of Furstenberg's correspondence principle. In particular, we use the following ergodic version of Furstenberg's correspondence principle. Recall that a measure preserving $\Z^d$-action $T:\Z^d \acts (X,\mu)$ on a probability space $(X,\mu)$\footnote{We choose not to include the underlying $\sigma$-algebra in our notation and moving forward all considered subsets of a measurable space will be assumed to be measurable.} is ergodic if every set $A\subset X$ satisfying $\mu(T^v A) = \mu(A)$ for all $v\in \Z^d$ has $\mu$-measure equal to $0$ or $1$.
\begin{pro}[Furstenberg's Correspondence Principle {\cite{BM}[Theorem 2.8]}]\label{Prop: FCP}
Let $E\subset \Z^d$ have $d^*(E)>0$. Then there exists an ergodic action $T: \Z^d \acts (X,\mu)$ and a set $A\subset X$ with $\mu(A) = d^*(E)$ satisfying that
\begin{equation}\label{eq: FCP conclusion}
\mu \left(\bigcap_{v\in F} T^v A\right) \leq d^*\left(\bigcap_{v\in F} (E+v)\right) \quad \text{for every finite } F\subset \Z^d.
\end{equation}
\end{pro}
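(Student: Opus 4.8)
The plan is to run Furstenberg's correspondence inside the full shift on $\{0,1\}^{\Z^d}$, and to force ergodicity by realising $d^*(E)$ as the maximum of an affine functional over the shift-invariant measures supported on the orbit closure of $\mathbf 1_E$. Set $X_0=\{0,1\}^{\Z^d}$ with the product topology (compact metrizable), let $\sigma$ be the shift $\Z^d$-action $(\sigma^m\omega)(k)=\omega(k+m)$, put $\xi=\mathbf 1_E\in X_0$, let $Y=\overline{\{\sigma^m\xi:m\in\Z^d\}}$, and let $A_0=\{\omega:\omega(0)=1\}$, which is clopen. Writing $E_y=\{k:y(k)=1\}$ (so $E_\xi=E$), a one-line computation gives, for all $n\in\Z^d$ and all finite $F\subset\Z^d$,
\[ \sigma^n y\in C_F:=\bigcap_{w\in F}\sigma^w A_0 \iff n\in\bigcap_{w\in F}(E_y+w), \]
with each $C_F$ clopen. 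I will also use the elementary fact that every $y\in Y$ agrees with a translate of $\xi$ on any prescribed finite window: for each finite $W$ there is an $m=m(y,W)$ with $E_y\cap W=(E-m)\cap W$. This is the only mechanism transferring information about $Y$ back to $E$ itself.

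The main step is to prove $d^*(E)=\max\{\nu(A_0):\nu\text{ a }\sigma\text{-invariant Borel probability on }Y\}$. The inequality ``$\ge$'' is the classical construction: pick cubes $Q_{N_k}+t_k$ realising the density, let $\mu_k=|Q_{N_k}|^{-1}\sum_{n\in Q_{N_k}+t_k}\delta_{\sigma^n\xi}$, and pass to a weak-$*$ subsequential limit $\mu$; it is $\sigma$-invariant (the $Q_N$ are Følner), supported on $Y$, and $\mu(A_0)=\lim_k|E\cap(Q_{N_k}+t_k)|/|Q_{N_k}|=d^*(E)$ since $A_0$ is clopen. For ``$\le$'' it suffices, the extreme points of the simplex of invariant measures being the ergodic ones, to bound $\nu(A_0)$ for ergodic $\nu$: such $\nu$ has a generic point $y\in Y$, so $|E_y\cap Q_{N_j}|/|Q_{N_j}|\to\nu(A_0)$ along a subsequence, and the local-translate property identifies $E_y\cap Q_{N_j}$ with $E$ on a translated cube, whence $\nu(A_0)\le d^*(E)$. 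Since $\nu\mapsto\nu(A_0)$ is affine and weak-$*$ continuous, the maximum is attained at an extreme point, i.e.\ at an \emph{ergodic} measure $\mu$ with $\mu(A_0)=d^*(E)$.

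It then remains to verify $\mu(C_F)\le d^*\big(\bigcap_{w\in F}(E+w)\big)$ for every finite $F$. There are countably many $F$, so a diagonalisation in the (mean, then subsequential pointwise) ergodic theorem yields a single $y\in Y$ generic for $\mu$ relative to all the clopen sets $C_F$ simultaneously: $|Q_{N_j}|^{-1}\sum_{n\in Q_{N_j}}\mathbf 1_{C_F}(\sigma^n y)\to\mu(C_F)$. By the computation above the sum equals $\big|\big(\bigcap_{w\in F}(E_y+w)\big)\cap Q_{N_j}\big|$, and choosing $m_j$ with $E_y\cap Q_{N_j+R}=(E-m_j)\cap Q_{N_j+R}$, where $R=\max_{w\in F}\|w\|_\infty$, rewrites this count as $\big|\big(\bigcap_{w\in F}(E+w)\big)\cap(Q_{N_j}+m_j)\big|$; dividing by $|Q_{N_j}|$ and letting $j\to\infty$ produces the bound. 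Taking $(X,\mu,T)=(X_0,\mu,\sigma)$ with $A=A_0$ is then the desired ergodic system.

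The hard part is precisely the ergodicity, since everything else is routine bookkeeping with clopen cylinders, Følner cubes and generic points. A plain weak-$*$ limit of empirical measures need not be ergodic, and the ergodic components of an invariant measure obeying the intersection bound need not each obey it, so one cannot simply pass to an ergodic component of the first measure one writes down. The device that resolves this is to choose $\mu$ as a maximiser of $\nu\mapsto\nu(A_0)$ over the invariant measures on the \emph{orbit closure} $Y$ — an affine functional on a compact convex set is maximised at an extreme point, which here is an ergodic measure — and only afterwards to re-establish the intersection inequality for that particular $\mu$ through one of its generic points. A minor technical point, easily absorbed, is the choice of a tempered subsequence of cubes so the pointwise ergodic theorem applies while still computing the correct upper Banach densities.
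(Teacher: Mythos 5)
Your proof is correct. Note that the paper does not prove this proposition at all: it is imported wholesale from Bergelson--Moragues \cite{BM}[Theorem 2.8], so there is no in-paper argument to compare against. What you have written is essentially the standard proof of the \emph{ergodic} correspondence principle from the literature: the one genuinely non-routine point, as you correctly identify, is that a weak-$*$ limit of empirical measures need not be ergodic and one cannot simply pass to an ergodic component without losing the intersection inequality; the fix of taking $\mu$ to be a maximiser of the continuous affine functional $\nu \mapsto \nu(A_0)$ over the (compact, convex, nonempty) set of invariant measures on the orbit closure, invoking Bauer/Krein--Milman to land on an ergodic extreme point, and only then re-deriving \eqref{eq: FCP conclusion} through a point generic for all the countably many clopen cylinders $C_F$ simultaneously, is exactly the right device and is how the cited reference proceeds. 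All the supporting steps check out: the identity $\sigma^n y \in C_F \iff n \in \bigcap_{w\in F}(E_y+w)$, the local agreement of any $y$ in the orbit closure with a translate of $\mathbf 1_E$ on finite windows, and the passage from counts over $Q_{N_j}$ to counts over $Q_{N_j}+m_j$ which are then dominated by $\sup_t$ and hence by $d^*$. The only remark worth adding is that for centered cubes in $\Z^d$ the classical Wiener pointwise ergodic theorem applies directly, so the temperedness caveat you mention at the end is not actually needed.
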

Our main new dynamical contributions are two \emph{expansivity theorems} for ergodic $\Z^d$-actions, the first of which is a quantitative strengthening of the notion of \emph{directional expansiveness} as introduced in \cite{BjF} by Björklund and the first author. For us, a direction in $\Z^d$ is a cyclic subgroup generated by a primitive\footnote{By primitive we mean that the greatest common divisor of all of the components of $v$ is equal to $1$.} vector in $\Z^d$. The term \emph{directional} then refers to properties of the sub-action of some direction in $\Z^d$.

A first natural directional question to ask is whether or not every ergodic action $T: \Z^d \acts (X,\mu)$ admits some direction for which the directional sub-action is ergodic. The answer to this question is no, and amongst other things, Robinson Jr, Rosenblatt and Sahin in \cite{RRS} provide an example of a weak-mixing $\Z^d$-system which admits no ergodic directions.

Notice that if some direction $v\in \Z^d$ was ergodic for an action $T: \Z^d \acts (X,\mu)$, then every positive measure set $A\subset X$ would satisfy that
\[ \mu\left(\bigcup_{n\in \Z} T^{nv}A \right)=1.\]
In light of this observation and the negative answer provided by the authors of \cite{RRS} to the aforementioned question regarding ergodic directions, in \cite{BjF} Björklund and the first author asked instead if for every $\varepsilon>0$ and every positive measure set $A\subset X$ must there exist some direction $v\in \Z^d$ for which
\[ \mu\left(\bigcup_{n\in \Z} T^{nv}A \right)>1-\varepsilon?\]
Again the answer is no as shown by the following example from \cite{BjF}.
\begin{exam}[A set which is not directionally expandable]\label{Example: Set which is not directionally expandable}
For some integer $N\geq 2$, equip the space $X:= \Z^d/(N\Z)^d$ with the counting probability measure $\mu$. The action $T:\Z^d \acts (X,\mu)$ by translations preserves $\mu$, however for any singleton $A=\{x\}\subset X$ and any vector $v\in \Z^d$,
\[ \bigcup_{n\in \Z}T^{nv} A\]
is a coset of a cyclic subgroup of $X$, and so must have $\mu$-measure at most $1/N^{d-1}$.
\end{exam}
However, as was the central to their proof of Theorem \ref{Theorem: Non quant Triangles}, the authors of \cite{BjF} showed that highly expansive directions can always be found provided that one first passes to some suitable ergodic component of the sub-action of $k\Z^d$, for some $k$ depending on the set $A$ and on $\varepsilon$.  As eluded to earlier, our first expansivity theorem is a quantitative strengthening of this observation.
To state the theorem precisely, we require the notion of a $T^k$-ergodic component.
\begin{pro}[$T^k$-ergodic components {\cite{Bu}[Proposition A.2]}]\label{Prop: Tk ergodic components}
Let $T: \Z^d \curvearrowright (X,\mu)$ act ergodically. For any positive integer $k$ there exist finitely many $k\Z^d$-invariant and ergodic probability measures $\nu_1,\ldots,\nu_n$ with disjoint supports such that
\begin{equation*}
\mu = \frac{1}{n}\sum_{i=1}^n \nu_i.
\end{equation*}
Moreover each $\nu_i$ is of the form
\[ \nu_i(\cdot) = \frac{\mu(\cdot \cap C_i)}{\mu(C_i)}\]
for some $k\Z^d$-invariant set $C_i\subset X$.
We call $\nu_1,\ldots,\nu_n$ the $T^k$-ergodic components of $\mu$.
\end{pro}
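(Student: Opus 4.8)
The plan is to analyse the sub-$\sigma$-algebra $\mathcal{I}_k$ of the measure algebra of $\mu$ consisting of (classes of) $k\Z^d$-invariant sets, i.e. sets $A$ with $\mu(T^{kv}A\,\triangle\,A)=0$ for every $v\in\Z^d$. The finite abelian group $G:=\Z^d/k\Z^d$, of order $k^d$, acts on $\mathcal{I}_k$: for a coset $g=v+k\Z^d$ the assignment $A\mapsto T^vA$ is independent of the chosen representative modulo $\mu$, it maps $\mathcal{I}_k$ into itself because $\Z^d$ is abelian, and it preserves $\mu$. The first step is a quantitative ergodicity bound: every $A\in\mathcal{I}_k$ with $\mu(A)>0$ satisfies $\mu(A)\ge k^{-d}$. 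To see this, fix coset representatives $v_1,\dots,v_{k^d}$ of $G$ and put $B:=\bigcup_{j}T^{v_j}A$. Each $T^{v_j}A$ is again $k\Z^d$-invariant, and for any $v_i$ the map $A'\mapsto T^{v_i}A'$ permutes the sets $\{T^{v_j}A\}_j$ modulo $\mu$ (since $v_i+v_j$ differs from the representative of its coset by an element of $k\Z^d$, under which $A$ is invariant); hence $B$ is $\Z^d$-invariant, so $\mu(B)\in\{0,1\}$ by ergodicity of $\mu$. As $\mu(A)>0$ this forces $\mu(B)=1$, whence $\mu(A)\ge k^{-d}\mu(B)=k^{-d}$.

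From this lower bound it follows that $\mathcal{I}_k$ is purely atomic with at most $k^d$ atoms: a non-atomic part would contain sets of arbitrarily small positive measure, and the atomic part can support only finitely many atoms since each has measure at least $k^{-d}$. Let $C_1,\dots,C_n$ (with $n\le k^d$) be these atoms; modulo $\mu$ they partition $X$ and generate $\mathcal{I}_k$. The $G$-action, being by measure-algebra automorphisms preserving $\mathcal{I}_k$, permutes $\{C_1,\dots,C_n\}$. I would then check that this permutation action is transitive: for a fixed atom $C_i$ the set $\bigcup_j T^{v_j}C_i$ is $\Z^d$-invariant (same computation as above), hence of full measure, and being a union of atoms it must equal $\bigsqcup_i C_i$; in particular every $C_{i'}$ equals $T^{v_j}C_i$ for some $j$. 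Since $\mu$ restricted to $\mathcal{I}_k$ is $G$-invariant and $G$ acts transitively on the $n$ atoms, all atoms have the same measure $1/n$.

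To conclude, set $\nu_i(\cdot):=\mu(\cdot\cap C_i)/\mu(C_i)=n\,\mu(\cdot\cap C_i)$. Each $\nu_i$ is $k\Z^d$-invariant because $C_i$ is; it is concentrated on $C_i$ and the sets $C_1,\dots,C_n$ are pairwise disjoint, giving disjoint supports; one has $\frac1n\sum_{i=1}^n\nu_i=\mu$ because the $C_i$ partition $X$ modulo $\mu$; and $\nu_i$ is $k\Z^d$-ergodic because any $k\Z^d$-invariant $D\subset C_i$ lies in $\mathcal{I}_k$ and, $C_i$ being an atom, satisfies $\mu(D)\in\{0,\mu(C_i)\}$, i.e. $\nu_i(D)\in\{0,1\}$.

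I do not expect a serious obstacle: the statement is a soft structural fact, and one could alternatively deduce it from the abstract ergodic decomposition of $\mu$ over $k\Z^d$ together with the observation that $G$ acts ergodically on the resulting (necessarily atomic) space of components. The only point requiring care is the bookkeeping around the ``modulo $\mu$'' identifications — that the $G$-action on $\mathcal{I}_k$ is well defined independently of coset representatives, that the atoms of $\mathcal{I}_k$ are representable by honest $k\Z^d$-invariant subsets of $X$, and that the permutation and transitivity statements persist after these identifications.
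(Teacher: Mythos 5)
Your proposal is correct and follows essentially the same route as the paper's proof in the appendix: both hinge on the bound $\mu(A)\ge k^{-d}$ for positive-measure $k\Z^d$-invariant sets (obtained by the same coset-representative union argument plus ergodicity of $\mu$), identify the minimal invariant sets as the ergodic components, and observe that their $\Z^d$-translates cover $X$. The only cosmetic difference is that you extract the components as atoms of the invariant measure algebra, whereas the paper takes a near-minimizer of the measure among invariant sets and shows it is already ergodic.
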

\begin{thmAlph}[Quantitative directional expansivity]
\label{Theorem: Quant Direct Exp}
For every $\delta>0$ and $\varepsilon>0$ there exists some positive integer $k_0=k_0(\delta,\varepsilon)$ such that the following holds. For every ergodic action $T: \Z^d \curvearrowright (X,\mu)$ and every $A\subset X$ with $\mu(A)\geq \delta$ there exists some positive integer $k\leq k_0$, some $T^k$-ergodic component $\nu$ of $\mu$ with $\nu(A)\geq \mu(A)$, and some primitive vector $v\in \Z^d$ such that
\[ \nu\left( \bigcup_{n\in \Z} T^{n v}A \right) > 1-\varepsilon.\]
\end{thmAlph}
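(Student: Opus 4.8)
The plan is to reduce Theorem \ref{Theorem: Quant Direct Exp} to a finitary statement about functions on finite abelian groups, following the strategy of \cite{BjF} but tracking the dependence on $\delta$ and $\varepsilon$ carefully enough to extract a uniform $k_0$. First I would fix $\delta,\varepsilon>0$ and argue by contradiction: suppose that for every positive integer $k$ there is an ergodic $\Z^d$-action $T_k:\Z^d\acts(X_k,\mu_k)$ and a set $A_k$ with $\mu_k(A_k)\geq\delta$ such that \emph{no} $T_k^{k'}$-ergodic component with $k'\leq k$ and no primitive vector $v$ gives $\nu\bigl(\bigcup_{n\in\Z}T_k^{nv}A_k\bigr)>1-\varepsilon$. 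The key observation is that the quantity $\nu\bigl(\bigcup_{n}T^{nv}A\bigr)$ is controlled by the behaviour of $A$ along the cyclic subgroup $\l v\r$, and more precisely by the Fourier coefficients of $1_A$ restricted to ergodic components: if the spectrum of the sub-action $T^{v}$ contains no small-denominator rationals, then $\bigcup_n T^{nv}A$ is forced to have measure close to $1$. So the failure of the conclusion for all $k$ says that, no matter how large $k$ is, passing to a $T^{k}$-ergodic component fails to ``kill'' the low-frequency obstructions in every direction simultaneously.

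Next I would make the spectral picture precise. For a fixed direction $v$ and a $T^k$-ergodic component $\nu$, the sub-action of $\l v\r\cong\Z$ on $(X,\nu)$ decomposes, via its Kronecker factor, and $\nu\bigl(\bigcup_n T^{nv}A\bigr)<1-\varepsilon$ forces $1_A$ to have a nontrivial projection onto an eigenfunction with eigenvalue a root of unity of order $O_\varepsilon(1)$ — otherwise the orbit union would equidistribute and cover all but an $\varepsilon$-fraction. Quantitatively: there exists $q=q(\varepsilon)$ and a character $\chi$ on $X$ (arising from the rational Kronecker factor) of ``order'' dividing $q$ such that $1_A$ correlates with $\chi$ along $v$. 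Running this over all primitive $v$ in, say, a large finite box, one obtains that $1_A$ has a substantial $L^2$-mass on the sub-$\sigma$-algebra generated by all rational eigenfunctions of bounded order — equivalently, $A$ is essentially measurable with respect to a factor $X\to G$ where $G$ is a finite abelian group whose exponent is bounded in terms of $\varepsilon$ (and whose size we do not control, but do not need to). Passing to the $T^k$-ergodic component for $k=\mathrm{exp}(G)$ then trivialises this factor and yields the conclusion, a contradiction. The role of $\delta$ is to guarantee, via a pigeonhole/mass argument, that at least one such component $\nu$ additionally satisfies $\nu(A)\geq\mu(A)$: since $\mu(A)$ is the average of $\nu_i(A)$ over the components, some component has $\nu_i(A)\geq\mu(A)$, and one checks the expansivity conclusion survives on that particular component (this is where one must be slightly careful, choosing $v$ after fixing the good component rather than before).

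I expect the main obstacle to be the \emph{uniformity}: extracting a single $k_0=k_0(\delta,\varepsilon)$ rather than a bound depending on the system. The natural route is a compactness argument — take a weak-$*$ limit of the hypothetical counterexamples $(X_k,\mu_k,A_k)$ in the space of $\Z^d$-systems (using that the correspondence-principle systems can be taken on a fixed Cantor space, or invoking the Furstenberg--Zimmer structure theory / an ultralimit), obtain a limit ergodic system with a positive-measure set $A_\infty$ for which the conclusion fails at \emph{every} finite level $k$, and then derive a contradiction with the qualitative directional-expansivity result of \cite{BjF} applied to $A_\infty$. The delicate points are (i) ensuring the limit action is still ergodic, or else passing to an ergodic component while preserving $\mu(A)\geq\delta$, and (ii) verifying that ``conclusion fails for all $k$'' passes to the limit — this requires that the orbit-union measures $\nu\bigl(\bigcup_{|n|\leq M}T^{nv}A\bigr)$ for finite $M$ are continuous functionals, so one should phrase the failure using finite truncations and then let $M\to\infty$. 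Once the compactness is set up, the spectral/Kronecker argument above supplies the contradiction, since \cite{BjF} guarantees \emph{some} finite $k$ works for the limit system.
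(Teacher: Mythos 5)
Your proposal diverges substantially from the paper's argument, and it contains two genuine gaps. The paper does not use compactness at all: it runs an effective Roth-style \emph{measure increment} iteration. The dichotomy (Proposition \ref{Proposition: Small Rat(M) gives directional conclusion} plus Lemma \ref{Lemma: Increment}) says that either the spectral measure $\sigma$ of $A$ gives mass less than $\kappa(\delta,\varepsilon)$ to $\Rat(M)$ for a single $M=M(\delta,\varepsilon)$ — in which case a haystack argument produces an expansive direction already for $\mu$ — or else some $T^{k_1}$-ergodic component $\nu_1$ with $k_1\leq M!$ satisfies $\nu_1(A)\geq\mu(A)+\kappa/3$. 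One then repeats the dichotomy for the sub-action of $k_1\Z^d$ on $(X,\nu_1)$; since each failure buys a measure gain of at least $\kappa(\delta,\varepsilon)/3$ and $\nu(A)\leq 1$, the process halts after $O(1/\kappa)$ rounds, and $k_0$ is the product of the at most $O(1/\kappa)$ factorials $M_i!$. Uniformity in the system is automatic, with no limit object needed.

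The first gap is your claim that failure of expansivity in every direction forces $A$ to be essentially measurable with respect to a factor $X\to G$ with $G$ finite abelian of exponent bounded in terms of $\varepsilon$, so that a \emph{single} passage to a $T^{\exp(G)}$-ergodic component ``trivialises'' the obstruction. What the spectral argument actually yields is only that a fixed positive \emph{amount} of spectral mass (namely $\kappa$) sits on rationals of denominator at most $M(\delta,\varepsilon)$; the remaining rational spectrum of $1_A$ can be spread over unboundedly many denominators (consider rotations on $\prod_i\Z/q_i\Z$ as in Section \ref{Section: Counter examples}), and after passing to one ergodic component the set $A$ acquires new low-denominator spectrum relative to the sub-action. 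This is precisely why an iteration with a termination criterion is unavoidable, and your proposal supplies neither the iteration nor a reason the one-step version would suffice. The second gap is in the compactness step: even if you realise all counterexamples on a fixed symbolic space so that $\mu\mapsto\mu\bigl(\bigcup_{|n|\leq M}T^{nv}A\bigr)$ is weak-$*$ continuous, the hypothesis you must transport to the limit is quantified over all $T^k$-ergodic components of $\mu_j$, and there is no continuity relating the $T^k$-ergodic components of the $\mu_j$ to those of the limit measure (nor is the limit ergodic in general, and passing to an ergodic component of the limit destroys the quantified hypothesis rather than preserving it). Finally, even a repaired compactness argument would only give an ineffective $k_0$, whereas the increment argument gives an explicit bound.
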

We remark that a non-quantitative version of Theorem \ref{Theorem: Quant Direct Exp} is implicit in \cite{BjF}, where $k$ and $\nu$ depend on $A$ and $\varepsilon$.
The affirmative answer to Question \ref{Question: Quant triangles} can then be deduced from Theorem \ref{Theorem: Quant Direct Exp} via the means of Proposition \ref{Prop: FCP}, and the details are provided in Section \ref{Section: Deducing combinatorial theorems}.
\begin{thmAlph}[Quantitative simplicies]\label{Theorem: Quantitative triangles}
Let $d\geq 2$ be an integer. For every $\delta>0$ there exists a positive integer $k_0 = k_0(\delta)$ such that the following is true. For every $E\subset \Z^d$ with upper Banach density $d^*(E)\geq \delta$ there exists some positive integer $k\leq k_0$ such that the set of all signed volumes of $d-$simplicies whose vertices are in $E$ contains the set $k\Z$.
\end{thmAlph}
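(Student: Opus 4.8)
The strategy is to transfer Theorem \ref{Theorem: Quant Direct Exp} to the combinatorial setting via Furstenberg's correspondence principle (Proposition \ref{Prop: FCP}) and then run the inductive "simplex from lower-dimensional simplex plus an expansive direction" argument of \cite{BjF}, keeping careful track of the dependence of all parameters on $\delta$ alone. First I would apply Proposition \ref{Prop: FCP} to obtain an ergodic action $T:\Z^d\acts(X,\mu)$ and a set $A\subset X$ with $\mu(A)=d^*(E)\geq\delta$ such that multiple intersections $\mu(\bigcap_{v\in F}T^vA)$ are dominated by $d^*(\bigcap_{v\in F}(E+v))$. The key point is that a signed volume $w$ is realized by a $d$-simplex with vertices $x_0,x_1,\dots,x_d\in E$ exactly when there is a common translate of the configuration $\{0,u_1,\dots,u_d\}$ inside $E$ with $\det(u_1,\dots,u_d)=\pm d!\, w$ (up to the standard normalization of signed volume); equivalently, $\bigcap_{i}(E - u_i)$ together with $E$ itself is nonempty, which follows once the corresponding intersection of sets in $X$ has positive $\mu$-measure.

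**The inductive core.** I would argue by induction on $d$, building the simplex one vertex at a time. At each stage we have, inside some ergodic component, a set of positive measure that already "contains" all $(j{-}1)$-dimensional simplices with volumes in $k_j\Z$; applying Theorem \ref{Theorem: Quant Direct Exp} to this set produces a primitive direction $v$ and a $T^{k}$-ergodic component $\nu$ with $\nu(A)\geq\mu(A)\geq\delta$ on which $\bigcup_{n\in\Z}T^{nv}A$ has measure $>1-\varepsilon$. Because $\nu$ is supported on a $k\Z^d$-invariant set and the direction $v$ is primitive, the orbit $\{T^{nv}x\}$ sweeps out a full "column," so intersecting the expansive union with the lower-dimensional configuration forces a new vertex displaced by some multiple of $v$; taking $\varepsilon$ small enough (depending only on the measure $\delta$ of the sets being intersected, via a union bound over the finitely many sets in the configuration) guarantees the intersection is nonempty. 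The new volume is the old volume times the height contributed by that multiple of $v$ in the $v$-direction, and choosing the step appropriately lets us hit an arithmetic progression; the resulting $k$ is the least common multiple of the $k_j$'s arising at each of the $d$ stages, each bounded by $k_0(\delta,\varepsilon_j)$ from Theorem \ref{Theorem: Quant Direct Exp}, hence bounded in terms of $\delta$ alone.

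**Main obstacle.** The delicate point is not any single step but the \emph{bookkeeping of quantifiers}: after passing to a $T^{k_1}$-ergodic component in the first stage, one must re-apply the expansivity theorem inside that component (which is itself an ergodic $k_1\Z^d$-action, after rescaling the lattice) while ensuring the density has not dropped — this is exactly what the clause $\nu(A)\geq\mu(A)$ in Theorem \ref{Theorem: Quant Direct Exp} is designed to preserve — and then one must track how the lattice rescalings at successive stages compose, so that the final modulus $k$ divides a quantity depending only on $\delta$. A secondary technical wrinkle is converting "the intersection of translates has positive measure in $X$" into "the intersection of translates of $E$ is nonempty in $\Z^d$" with the correct sign/orientation of the volume, which requires choosing the multiples of the direction vectors with a definite sign so that the determinant computing the signed volume comes out as the desired element $km$ rather than merely $\pm km$; since we want all of $k\Z$ (both signs), running the construction symmetrically handles this. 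I expect the cleanest write-up to isolate the one-step passage ("from a $(d{-}1)$-dimensional quantitative volume spectrum to a $d$-dimensional one") as a lemma and then iterate it $d$ times.
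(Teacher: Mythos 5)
Your overall architecture (Furstenberg correspondence, one use of the expansivity theorem per ``new vertex,'' a union bound, then a determinant computation) is the right flavour, but the inductive core as you describe it has a genuine gap in the volume computation. In your scheme each stage produces a \emph{new} direction $v$ and a new vertex displaced by an \emph{uncontrolled} multiple $n v$ coming from the expansive union $\bigcup_{n}T^{nv}A$: the integer $n$ is existentially quantified by the positivity of the intersection, not chosen by you. The signed volume is $\det(u_1,\ldots,u_d)/d!$, and a column of the form $n v + k v_i$ contributes a term $n\det(\ldots,v,\ldots)$ that does not vanish when the stages use different directions; so ``the new volume is the old volume times the height'' and ``choosing the step appropriately'' cannot be carried out --- the unknown $n$'s contaminate the determinant and you cannot land in a fixed arithmetic progression. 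The paper avoids this by applying Theorem \ref{Theorem: Quant Direct Exp} exactly \emph{once}, obtaining a single direction $v$ and a single component $\nu$, and then taking all $d-1$ remaining displacement columns of the form $n_i v + k v_i$ (same $v$, freely chosen $v_i$, using $k\Z^d$-invariance of $\nu$ to translate the expansive union by $kv_i$) together with one column that is a \emph{pure} multiple $mv$. Multilinearity then kills every term in which $v$ appears twice, leaving $mk^{d-1}\det(v,v_1,\ldots,v_{d-1})/d!$ with no $n_i$'s, and primitivity of $v$ supplies $v_1',\ldots,v_{d-1}'$ with $\det(v,v_1',\ldots,v_{d-1}')=1$ so that all multiples of $mk^{d-1}$ are hit.

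A second, related omission: the column $mv$ with $m$ \emph{bounded in terms of $\delta$} does not come from the expansivity theorem at all. The paper gets it from a separate Cauchy--Schwarz/pigeonhole argument on the sets $A, T^{kv}A,\ldots,T^{(M-1)kv}A$, producing $m\le 2k/\nu(A)$ with $\nu(A\cap T^{mv}A)>\nu(A)^2/2$. Without this step your final modulus would involve an uncontrolled integer and the theorem would lose its quantitative character (indeed it would not yield a single modulus at all). Your remarks about quantifier bookkeeping across successive ergodic components are sensible, but they address a difficulty the paper's proof never encounters, since only one passage to an ergodic component is needed; the iteration over components lives entirely inside the proof of Theorem \ref{Theorem: Quant Direct Exp} itself.
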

Our proof of Theorem \ref{Theorem: Quantitative triangles} shares much in common with the proof of Theorem \ref{Theorem: Non quant Triangles} in \cite{BjF}, however the use of Theorem \ref{Theorem: Quant Direct Exp} both shortens and strengthens a key part of the proof.

The main new idea in the proof Theorem \ref{Theorem: Quant Direct Exp} is to use a new measure increment argument which is a direct measure theoretic analogue of the original density increment argument used by Roth \cite{Roth} in the proof of his famous theorem on three-term arithmetic progressions. The details of this measure increment argument are discussed in Section \ref{Section: Measure Increment}. A different type of measure increment argument was used in \cite{BuF} by Bulinski and the first author in their proof of Theorem \ref{Theorem: Bogolyubov}, and our measure increment argument also allows us to establish an expansivity theorem in this polynomial setting. In fact, we prove a multivariable polynomial expansivity theorem.
\begin{thmAlph}[Quantitative polynomial expansivity]\label{Theorem: Quant Poly Exp}
Let $P=(P_1,\ldots,P_d):\Z^r \to \Z^d$ be an integer polynomial in $r$ variables with zero constant term such that the component polynomials $P_1,\ldots,P_d$ are linearly independent. Then for every $\delta>0$ and every $\varepsilon>0$ there exists some positive integer $k_0=k_0(P,\delta,\varepsilon)$ such that the following holds. For every ergodic action $T: \Z^d \curvearrowright (X,\mu)$ and every $A\subset X$ with $\mu(A)\geq \delta$ there exists some positive integer $k\leq k_0$ and some $T^k$-ergodic component $\nu$ of $\mu$ with $\nu(A)\geq \mu(A)$ satisfying that
\[ \nu\left( \bigcup_{n\in \Z^r} T^{P(n)}A \right) > 1-\varepsilon.\]
\end{thmAlph}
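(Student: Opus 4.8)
The plan is to run a \emph{measure increment argument} in direct analogy with Roth's density increment for three–term progressions \cite{Roth}. Assume, toward a contradiction, that for a (large, to be chosen) $k_0$ there is an ergodic $T\colon\Z^d\acts(X,\mu)$ and a set $A$ with $\mu(A)\geq\delta$ such that $\nu\pan{\bigcup_{n\in\Z^r}T^{P(n)}A}\leq 1-\varepsilon$ for \emph{every} $k\leq k_0$ and every $T^k$-ergodic component $\nu$ with $\nu(A)\geq\mu(A)$. Applying this with $\nu=\mu$, the set $C:=X\setminus\bigcup_n T^{P(n)}A$ satisfies $\mu(C)\geq\varepsilon$, and since $C$ is disjoint from $T^{P(n)}A$ for every $n$ (this is one of the sets in the defining union), we get $A\cap T^{-P(n)}C=\varnothing$, i.e. $1_A\cdot T^{-P(n)}1_C=0$ $\mu$-a.e. for all $n\in\Z^r$.

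The first and main step is to extract quantitative structure from the polynomial ergodic averages $\frac{1}{|\Phi_N|}\sum_{n\in\Phi_N}T^{-P(n)}1_C$ along a Følner sequence $\Phi_N$ in $\Z^r$. I expect these to converge in $L^2(\mu)$ to a limit $g$ which, by spectral theory together with Weyl's equidistribution theorem, is $\geq 0$, is measurable with respect to the profinite (rational–spectrum) factor of $(X,\mu,T)$, has $\int g\,d\mu=\mu(C)\geq\varepsilon$, and satisfies $1_A\,g=0$ a.e. by the previous paragraph: the continuous spectral part of $1_C$ is annihilated in the limit, while an eigenfunction with eigenvalue $\theta\in\widehat{\Z^d}$ contributes only if the polynomial $n\mapsto\theta\cdot P(n)$ has all non-constant coefficients rational, which — because $P_1,\dots,P_d$ are linearly independent integer polynomials — forces $\theta\in(\Q/\Z)^d$. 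The decisive point is then to upgrade this to \emph{bounded} complexity: standard estimates on complete exponential sums $\frac{1}{q^r}\sum_{n\in(\Z/q)^r}e(\theta\cdot P(n))$ (Weyl, or direct evaluation in the linear case) should give $Q_0=Q_0(P,\varepsilon,\delta)$ so that eigenvalues of denominator exceeding $Q_0$ carry negligible weight; replacing $g$ by its conditional expectation $g'$ onto the finite factor $Z=\Z^d/\Lambda$ generated by eigenvalues of denominator dividing $Q_0$ (so $[\Z^d:\Lambda]$ is bounded in terms of $P,\varepsilon,\delta,d$ only), I keep $0\leq g'\leq 1$, $\int g'\,d\mu\geq\varepsilon$, and $\lVert 1_A g'\rVert_{L^1}$ as small as I wish.

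The second step is the increment and the iteration. Since $g'$ is $Z$-measurable with $\int g'\geq\varepsilon$, the union of $Z$-fibres $W:=\{g'<\varepsilon/2\}$ has $\mu(W)\leq 1-\varepsilon/2$ while $A$ lies in $W$ up to a tiny error; taking $k$ to be the exponent of $\Z^d/\Lambda$ (so $k\Z^d\subseteq\Lambda$ and $k$ is bounded in terms of $P,\varepsilon,\delta,d$), every $T^k$-ergodic component from Proposition \ref{Prop: Tk ergodic components} lies inside a single fibre, and averaging $\nu(A)$ over the components sitting inside $W$, compared with $\mu(A\cap W)/\mu(W)$, yields a $T^k$-ergodic component $\nu_1$ with $\nu_1(A)\geq\mu(A)+c$ where $c:=\delta\varepsilon/4$. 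To iterate I pass to the ergodic $\Z^d$-system $(\mathrm{supp}\,\nu_1,\nu_1,T^k)$, with the set $A_1:=A\cap\mathrm{supp}\,\nu_1$ and the polynomial $P_1(m):=P(km)/k$ — which has integer coefficients, zero constant term, and linearly independent components, and whose image lands in $k\Z^d$ so that $T^{P(km)}$ preserves $\mathrm{supp}\,\nu_1$. Then $\bigcup_m (T^k)^{P_1(m)}A_1\subseteq\bigcup_n T^{P(n)}A$, so the failure hypothesis (valid since $\nu_1(A)\geq\mu(A)$ and $k\leq k_0$) gives $\nu_1\pan{\bigcup_m (T^k)^{P_1(m)}A_1}\leq 1-\varepsilon$: we are exactly back in the starting configuration with $\mu(A)$ replaced by $\nu_1(A_1)\geq\mu(A)+c$. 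After at most $\lceil(1-\delta)/c\rceil$ repetitions the measure of the ambient copy of $A$ would exceed $1$, so the inequality $>1-\varepsilon$ must hold at some stage; since the accumulated modulus after that many steps is bounded by an explicit function of $P,\delta,\varepsilon,d$, choosing $k_0$ to exceed it completes the proof.

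I expect the genuine difficulty to be entirely in the \emph{quantitative} half of the first step, namely producing a finite factor whose index is bounded purely in terms of $P,\varepsilon,\delta$ and not of the system; this is where Weyl-type decay of polynomial exponential sums in the modulus $q$ must be quoted, and it plays precisely the role that Dirichlet's approximation theorem plays in Roth's original argument, while the linear-independence and zero-constant-term hypotheses enter exactly here and in the verification that the transformation $P\mapsto P(k\cdot)/k$ preserves the hypotheses. Checking this preservation, the convergence of the polynomial averages (via Bergelson–Leibman or directly via the spectral identity above), and the elementary averaging in the increment should all be routine.
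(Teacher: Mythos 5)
Your proposal is correct in outline and follows the same architecture as the paper: a Roth-style measure increment driven by a dichotomy between ``the rational spectrum with small denominators carries little mass, hence expansivity holds'' and ``it carries substantial mass, hence $A$ concentrates on a $T^k$-ergodic component.'' Within that architecture you differ in two places. First, your increment mechanism is not the paper's: the paper shows directly (Lemma \ref{Lemma: Increment}) that $\nu(A)\geq\sqrt{\mu(A)^2+\sigma(\Rat(M))}$ for some $T^{M!}$-ergodic component, via Parseval applied to the projection of $1_A$ onto the low-denominator eigenspaces; you instead observe that $A$ essentially avoids the fibres of the finite rational factor where the limit function $g'$ is $\geq\varepsilon/2$, which have total mass $\geq\varepsilon/2$, and pigeonhole over the components of the complement. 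Both work; yours is closer to Roth's original ``the set lives in few residue classes'' step and gives the cleaner increment $\delta\varepsilon/4$, while the paper's is phrased purely in terms of the spectral measure and so plugs into both the polynomial and the directional theorems with one lemma. Second, you run the Weyl/exponential-sum analysis directly in $r$ variables, whereas the paper first reduces to $r=1$ via the substitution $x_j\mapsto n^{(D+1)^j}$ (which preserves the coefficient set and hence linear independence) precisely so that it can quote the single-variable Hua bound from \cite{BuF}.

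That second point is the one place where your ``standard estimates'' is carrying real weight. What you need is not Weyl equidistribution but the uniform statement
\[\sup_{\denom(\theta)=q}\labs{\frac{1}{q^r}\sum_{n\in(\Z/q\Z)^r}e\big(\theta\cdot P(n)\big)}\longrightarrow 0\quad\text{as }q\to\infty,\]
and for fixed $q$ one must rule out the polynomial $\theta\cdot P$ degenerating modulo $q$ (e.g.\ all its non-constant coefficients sharing a large factor with $q$). The paper derives this from linear independence via a Smith normal form argument (Lemma \ref{Lemma: Independence implies bounded complexity}: linear independence implies bounded multiplicative complexity), after which Hua's inequality applies. You correctly flag this as the crux and the required input is available, so I would not call it a gap, but your sketch should either reduce to one variable as the paper does or supply a multivariable version of the bounded-complexity argument; ``direct evaluation in the linear case'' does not address it. The remaining ingredients of your argument --- $L^2$ convergence of the polynomial averages, the identification of the limit with the rational point spectrum (this is Lemma \ref{Lemma: Can ignore irrationals}, and linear independence is exactly what forces contributing eigenvalues into $\Q^d/\Z^d$), the verification that $P(k\cdot)/k$ keeps integer coefficients, zero constant term and independent components, and the bookkeeping showing the accumulated modulus is bounded in terms of $P,\delta,\varepsilon$ only --- all check out and match the paper.
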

From Theorem \ref{Theorem: Quant Poly Exp} we are able to prove a multidimensional extension of Theorem \ref{Theorem: Bogolyubov}.
\begin{thmAlph}[Quantitative multi-dimensional polynomial Bogolyubov theorem]\label{Theorem: Multidimensional Bog}
Let $P=(P_1,\ldots,P_d):\Z^d \to \Z^d$ be an integer polynomial in $d$-variables with zero constant term satisfying that no non-trivial linear combination of its component polynomials $P_1,\ldots,P_d$ has degree less than $2$. Then for every $\delta>0$ there exists a positive integer $k=k(P,\delta)$ such that the following holds. For every $E\subset \Z^d$ with upper Banach density $d^*(E)\geq \delta$ we have that
\[ k\Z^d \subset E-E + P(E-E).\]
\end{thmAlph}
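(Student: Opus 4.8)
The plan is to feed $E$ into Furstenberg's correspondence principle, run the polynomial expansivity theorem on the resulting system, and then exploit the $k\Z^d$-invariance of the $T^k$-ergodic component to get a conclusion uniform in $w\in\Z^d$, transferring back to $E$ at the very end.

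First I would apply Proposition~\ref{Prop: FCP} to $E$, obtaining an ergodic action $T:\Z^d\acts(X,\mu)$ and a set $A\subset X$ with $\mu(A)=d^*(E)\ge\delta$ and $\mu(\bigcap_{v\in F}T^vA)\le d^*(\bigcap_{v\in F}(E+v))$ for every finite $F\subset\Z^d$. The hypothesis on $P$ — that no nontrivial linear combination of $P_1,\dots,P_d$ has degree less than $2$ — in particular forces $P_1,\dots,P_d$ to be linearly independent, so Theorem~\ref{Theorem: Quant Poly Exp} applies to $P$ with $r=d$. Applying it with $\varepsilon:=\delta/3$ produces a positive integer $k\le k_0(P,\delta,\delta/3)=:k(P,\delta)$, a $T^k$-ergodic component $\nu$ with $\nu(A)\ge\mu(A)\ge\delta$, and $\nu\bigl(\bigcup_{n\in\Z^d}T^{P(n)}A\bigr)>1-\varepsilon$.

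The key feature is that $\nu$ is $k\Z^d$-invariant. Hence, for every $w\in\Z^d$, applying the measure-preserving map $T^{kw}$ gives $\nu(T^{kw}A)=\nu(A)\ge\delta$ and $\nu\bigl(\bigcup_{n}T^{kw+P(n)}A\bigr)=\nu\bigl(\bigcup_{n}T^{P(n)}A\bigr)>1-\varepsilon$, while also $\nu\bigl(\bigcup_{m}T^{P(m)}A\bigr)>1-\varepsilon$. Therefore $A\cap\bigcup_{m}T^{P(m)}A\cap\bigcup_{n}T^{kw+P(n)}A$ has $\nu$-measure at least $\delta-2\varepsilon>0$; since these are countable unions, there are $m_0=m_0(w)$ and $n_0=n_0(w)$ in $\Z^d$ with $\mu\bigl(A\cap T^{P(m_0)}A\cap T^{kw+P(n_0)}A\bigr)>0$, and hence $a fortiori$ $\mu(A\cap T^{P(m_0)}A)>0$ and $\mu(A\cap T^{kw+P(n_0)}A)>0$. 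Feeding sets of translates such as $F=\{0,P(m_0),kw+P(n_0)\}$ into the inequality from Proposition~\ref{Prop: FCP} then yields a point in $E\cap(E+P(m_0))\cap(E+kw+P(n_0))$, i.e.\ three elements of $E$ from which one tries to read off $kw$ as $(e_1-e_2)+P(e_3-e_4)$ with all $e_i\in E$; here one also uses that $E-E$ is symmetric and is syndetic with gaps bounded in terms of $\delta$ (another consequence of Proposition~\ref{Prop: FCP} together with ergodicity of $T$), and that $P(k\Z^d)\subseteq k\Z^d$ because $P$ has zero constant term.

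I expect this last transfer step to be the main obstacle. The difficulty is that in Theorem~\ref{Theorem: Quant Poly Exp} the polynomial argument $n$ ranges over \emph{all} of $\Z^d$, whereas $P(E-E)$ only permits $P$ to be evaluated on \emph{differences} of elements of $E$; and the $k\Z^d$-invariance of $\nu$ naturally manufactures occurrences of $+P(n_0)$ rather than the $-P(n_0)$ that the decomposition wants to absorb. Bridging this — guaranteeing that the relevant polynomial argument may be taken inside $E-E$ (equivalently, inside the return set $\{v:\mu(A\cap T^vA)>0\}$) — is precisely where the degree hypothesis on $P$, which is strictly stronger than the linear independence needed for Theorem~\ref{Theorem: Quant Poly Exp}, is genuinely used. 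I would handle it by applying Theorem~\ref{Theorem: Quant Poly Exp} a second time to a suitable auxiliary polynomial (for instance $-P$, or $(n,m)\mapsto P(n)-P(m)$; the degree hypothesis ensures that such a modification still has linearly independent component polynomials), replacing $k$ by a bounded least common multiple of the two moduli and passing to a common $T^k$-ergodic component, and then combining with the inclusion $P(k\Z^d)\subseteq k\Z^d$ and the bounded-gap syndeticity of return sets to complete the rearrangement. Throughout, the final modulus $k$ depends only on $P$ and $\delta$, which gives the quantitative conclusion $k\Z^d\subseteq E-E+P(E-E)$.
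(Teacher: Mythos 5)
You have correctly isolated the crux --- Theorem \ref{Theorem: Quant Poly Exp} lets the polynomial argument $n$ range over all of $\Z^d$, whereas the conclusion needs $P$ evaluated only on $E-E$ --- but your proposed repair does not close this gap. Applying Theorem \ref{Theorem: Quant Poly Exp} a second time to $-P$ (or to $(n,m)\mapsto P(n)-P(m)$) and passing to a common $T^k$-ergodic component only fixes the sign or reshapes the polynomial orbit; it still produces some $n_0\in\Z^d$ with $\mu(A\cap T^{kw-P(n_0)}A)>0$ and gives no control whatsoever over whether $n_0$ itself lies in the return set $\{v:\mu(A\cap T^vA)>0\}\subset E-E$. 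Syndeticity of the return set does not help either: you would need to place $n_0$ in the return set \emph{while simultaneously} keeping $kw-P(n_0)$ in the return set, and these two coupled conditions cannot be delivered by union bounds over a single copy of the system.

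The paper's resolution is a product trick that handles both conditions at once. One applies Proposition \ref{Prop: FCP} not to $E$ but to $E':=E\times E\subset\Z^{2d}$ (so $\mu(A)=d^*(E')\ge\delta^2$), and applies Theorem \ref{Theorem: Quant Poly Exp} to the auxiliary polynomial $Q:\Z^d\to\Z^{2d}$ given by $Q(n)=(-P(n),n)$, with $\varepsilon<\delta^2$. The second block of coordinates of $Q$ records the argument $n$ itself, so the single dynamical statement $\mu(T^{Q(n)+(km,0)}A\cap A)>0$ translates, via the correspondence for $E'$, into $km-P(n)\in E-E$ \emph{and} $n\in E-E$ simultaneously, whence $km=(km-P(n))+P(n)\in E-E+P(E-E)$. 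This is also exactly where the full strength of the degree hypothesis enters: linear independence of the $2d$ components of $Q$ (namely $-P_1,\dots,-P_d$ together with the coordinate functions $n_1,\dots,n_d$) amounts to no non-trivial linear combination of $P_1,\dots,P_d$ having degree at most $1$, which is strictly stronger than the linear independence of $P_1,\dots,P_d$ that your direct application of Theorem \ref{Theorem: Quant Poly Exp} to $P$ uses. Your opening reduction (correspondence principle plus polynomial expansivity plus $k\Z^d$-invariance of $\nu$) is the right skeleton, but without the passage to $E\times E$ and the polynomial $Q$ the argument does not go through.
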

We remark that the degree requirements in Theorems \ref{Theorem: Multidimensional Bog} and Theorem \ref{Theorem: Bogolyubov} are both necessary, and we prove this fact in Section \ref{Section: Appendix}.

Our proofs of Theorems \ref{Theorem: Quant Direct Exp} and \ref{Theorem: Quant Poly Exp} share several techniques with the results they extend from \cite{BjF} and \cite{BuF} respectively, however one of the central achievements of this paper is the synthesis of the ideas of expansivity developed in \cite{BjF} along with the measure increment techniques studied in \cite{BuF}. In particular, this unification yields an extension of the notion of expansivity for polynomial orbits in $\Z^d$.

In addition, the change in perspective provided by the use of Theorem \ref{Theorem: Quant Poly Exp} also allows us to establish a counter example to the pinned version of the polynomial Bogolyubov theorem, providing the negative answer to Question \ref{Question: Pinned polynomial Bogolyubov theorem}.

Indeed, as will be made clear from the deductions of Theorems \ref{Theorem: Quantitative triangles} and \ref{Theorem: Multidimensional Bog} from Theorems \ref{Theorem: Quant Direct Exp} and \ref{Theorem: Quant Poly Exp} respectively, pinned variants of both Theorems \ref{Theorem: Non quant Triangles} and \ref{Theorem: Bogolyubov} would follow if one could first establish strengthened versions of Theorems \ref{Theorem: Quant Direct Exp} and \ref{Theorem: Quant Poly Exp} in which one can take $\varepsilon=0$. In Section \ref{Section: Counter examples} we provide examples to show that both of these strengthenings fail. To our surprise, our counter example to the $\varepsilon=0$ version of Theorem \ref{Theorem: Quant Poly Exp} also yields a counter-example to the pinned version of the polynomial Bogolyubov's theorem described in Question \ref{Question: Pinned polynomial Bogolyubov theorem}. Indeed, in Section \ref{Section: Counter example to Bog} we prove the following.
\begin{thmAlph}[Counter-example to the pinned version of the polynomial Bogoylubov theorem]\label{Theorem: Counter-example to pinned Bog}
Let $P\in \Z[n]$ have $P(0)=0$ and $\deg P \geq 2$.
There exists a set $E\subset \Z$ with $d^*(E)>0$ such that for every positive integer $k$, there exists a positive integer $m$ with
\[ \{k,2k,\ldots, km\} \not \subset E-x + P(E-y) \quad \text{for every }x,y\in E.\]
\end{thmAlph}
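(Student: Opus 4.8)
The plan is to produce $E$ by hand, guided by the finite models that defeat the ``$\varepsilon=0$'' version of Theorem~\ref{Theorem: Quant Poly Exp}. The basic input is that, since $\deg P\ge 2$, the map $P$ is never a bijection of $\Z/q$ for suitable $q$; quantitatively, a Weil/Chebotarev estimate for value sets gives an absolute constant $c=c(\deg P)>0$ and infinitely many primes $q$ with $|P(\Z/q)|\le(1-c)q$. For such a $q$, fix any $z_q\in\Z/q$ and put $C_q:=\Z/q\setminus(z_q-P(\Z/q))$; then $|C_q|\ge cq$, and since $0=P(0)\in P(\Z/q)$ we get $z_q\notin C_q$ and, crucially, $z_q\notin C_q+P(\Z/q)\supseteq C_q+P(C_q-b)$ for every $b\in\Z/q$. (The rotation $\Z\acts\Z/q$ with $A=C_q$ is itself a counterexample to the $\varepsilon=0$ form of Theorem~\ref{Theorem: Quant Poly Exp} for all $k<q$: every $T^k$-ergodic component $\nu$ satisfies $\nu(A)\ge\mu(A)$ and $\nu\bigl(\bigcup_n T^{P(n)}A\bigr)\le|C_q+P(\Z/q)|/q<1$ whenever $\gcd(k,q)=1$.)

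Next, build $E\subseteq\Z_{>0}$ as a \emph{hierarchy of blocks}. Fix distinct primes $q_1<q_2<\cdots$ as above, with associated data $z_{q_i},C_{q_i}$, and choose integers $s_1<s_1+w_1<s_2<\cdots$ with $w_i\to\infty$, the scales $s_i$ extremely sparse, and the $s_i$ chosen so that in addition $\mathcal P:=\{P(n):n\in\Z\}$ is disjoint from each inter‑block difference window $[\,s_i-s_j-w_j,\ s_i-s_j+w_i\,]$ for $j<i$ — possible because $\mathcal P$ has gaps of order $V^{1-1/\deg P}$ near $V$, leaving room to slide each $s_i$. On the coarsest level set $E\cap[s_i,s_i+w_i)=\{n\in[s_i,s_i+w_i): n\bmod q_i\in C_{q_i}\}$, and inside each gap repeat the construction at finer scales with smaller moduli, arranging that from any point of $\Z_{>0}$ one meets, within $O(\mathrm{poly}\log k)$ steps, some block whose modulus is coprime to $k$. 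The coarsest level already gives $d^*(E)\ge c>0$ (each block carries density $\ge c$ because the $C_{q_i}$ are equidistributed), while the sparsity/disjointness conditions force that inside every block $I$ of modulus $q$ in the construction, the set $E+\mathcal P$ — hence also $x+(E-x+P(E-y))=E+P(E-y)\subseteq E+\mathcal P$ for any pins $x,y$ — meets $I$ only in residues of $C_q+P(\Z/q)$, i.e.\ never in the residue $z_q$ (a representation landing in $I$ must use a summand from $E\cap I$).

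To finish, given $k\ge1$ take $m=m(k)$ a sufficiently large constant ($O(\mathrm{poly}\log k)$ in the favourable cases). For an arbitrary pin $x$, follow $x+k,x+2k,\dots$ until it enters a block $I$ of modulus $q$ with $q\nmid k$; then, using $\gcd(q,k)=1$, continue until some $x+jk\in I$ has $x+jk\equiv z_q\pmod q$. By the previous paragraph $x+jk\notin E+\mathcal P$, and since the huge gaps and $E\subseteq\Z_{>0}$ prevent blocks far from $x$ and $y$ from contributing a representation of a small integer, one concludes $jk\notin E-x+P(E-y)$; hence $\{k,2k,\dots,mk\}\not\subseteq E-x+P(E-y)$ for all $x,y\in E$, which is the assertion.

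The main obstacle, and the reason for the hierarchical rather than single‑scale construction, is that one $m=m(k)$ must work simultaneously for \emph{every} pin $x\in E$, including pins lying deep inside the large ``thick'' blocks that are needed to keep $d^*(E)$ positive: a modular obstruction at a block of modulus $q$ is visible only within $O(q)$ steps, and the moduli necessarily tend to infinity, so no single scale can be exploited uniformly. Interleaving finer blocks (and, where needed, subdividing the thick blocks themselves) so that a $k$‑coprime modulus is always within a bounded‑in‑$k$ distance is what repairs this; making this precise while still forcing $d^*(E)>0$, and while controlling the residual interference between scales, is the technical heart of the argument.
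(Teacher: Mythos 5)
Your arithmetic engine is the same as the paper's: since $\deg P\ge 2$, infinitely many primes $q$ have $|P(\Z/q)|\le(1-c)q$, so one can forbid a residue class $z_q$ from $C_q+P(\Z/q)$, and an arithmetic progression of difference $k$ with $\gcd(k,q)=1$ must eventually hit that class. But your construction of $E$ is incomplete precisely at the step you yourself flag as ``the technical heart'': making a single $m=m(k)$ work for \emph{every} pin $x\in E$. A pin sitting inside a block whose modulus shares a factor with $k$ sees no obstruction from that block, and the next block at a different scale is at a distance $s_{i+1}-s_i$ that grows without bound, so no $m(k)$ can reach it uniformly over $i$. Your proposed fix --- recursively subdividing the thick blocks --- runs into the density problem you mention: each additional congruence condition multiplies the local density by a factor bounded away from $1$, and handling all $k$ requires unbounded depth, so the density inside the blocks containing the pins degenerates unless the exceptional densities at successive levels decay summably. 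You do not arrange this, and without it the argument does not close.

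The paper resolves exactly this tension by abandoning the block hierarchy and imposing \emph{all} congruence conditions \emph{everywhere simultaneously}: it takes moduli $q_i$ that are products of $i$ distinct primes from your list, so that the image $S(q_i)$ of $P$ mod $q_i$ has density $\le\lambda^i$ (Chinese remainder theorem), sets $A_i=\Z/q_i\Z\setminus(-S(q_i))$, and lets $E=E_x$ be the return-time set of a generic point of the product rotation on $\prod_i\Z/q_i\Z$ to $A=\prod_i A_i$ (via the pointwise ergodic theorem rather than an explicit block construction). Then $d^*(E)\ge\prod_i(1-\lambda^i)>0$, and every integer --- hence every pin --- carries the mod-$q_i$ obstruction for every $i$ at once; given $k$ one simply picks $i$ with $\gcd(k,q_i)=1$ and takes $m=q_i$, since a length-$q_i$ progression of difference $k$ covers all residues mod $q_i$ and $E+P(\Z)$ avoids the class $0\bmod q_i$. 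If you want to salvage your combinatorial construction, the fix is to replace the union of single-modulus blocks by the intersection over all $i$ of the conditions $n\bmod q_i\in C_{q_i}$ with geometrically decaying exceptional densities --- which is the paper's construction in disguise. As written, your proof has a genuine gap.
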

\begin{ack}
A. Fish was supported by the ARC via grants DP210100162 and DP240100472. We are grateful to Nick Bridger for enlightening discussions on the topic of permutation polynomials and providing the reference to \cite{Turnwald}.
\end{ack}
\section{\textbf{Deduction of combinatorial theorems}}\label{Section: Deducing combinatorial theorems}
\begin{proof}[Proof of Theorem \ref{Theorem: Multidimensional Bog} via Theorem \ref{Theorem: Quant Poly Exp}]
Let $P=(P_1,\ldots,P_d):\Z^d \to \Z^d$ be as in the statement of the Theorem. Fix some $\delta>0$ and let $E\subset \Z^d$ have $d^*(E)\geq \delta$. Consider the product set $E':= E\times E \subset \Z^{2d}$. By Proposition \ref{Prop: FCP} there exists an ergodic action $T:\Z^{2d}\acts (X,\mu)$ and a set $A\subset X$ with $\mu(A)=d^*(E')\geq \delta^2$ satisfying
\begin{equation}\label{eq: 1}
\mu(A\cap T^{v}A) \leq d^*(E\cap (E+v)) \quad \text{for every } v\in \Z^{2d}.
\end{equation}
Define an auxiliary integer polynomial $Q:\Z^d \to \Z^{2d}$ by
\[Q(n)=(-P(n),n) \quad\text{for every }n\in \Z^d.\]
Our assumptions on $P$ ensure that the polynomial $Q:\Z^d \to \Z^{2d}$ has zero constant term and linearly independent component polynomials. We can then apply Theorem \ref{Theorem: Quant Poly Exp} to some $\varepsilon<\delta^2$, the system $(X\supset A,\mu)$ and the polynomial $Q$ to find some positive integer $k\leq k_0(Q,\delta,\varepsilon)$ and a $T^k$-ergodic component $\nu$ of $\mu$ with $\nu(A)\geq \mu(A)$ satisfying that
\begin{equation}
\nu\left( \bigcup_{n\in\Z^d}T^{Q(n)}A \right) > 1-\varepsilon.
\end{equation}
Fix any $m\in \Z^d$. Using that $\nu$ is invariant under the action of $k\Z^{2d}$ we also have that
\begin{equation}\label{eq: 2}
\nu\left( \bigcup_{n\in\Z^d}T^{Q(n)+(km,0)}A \right) > 1-\varepsilon.
\end{equation}
Since $\nu(A)\geq \mu(A)\geq \delta^2 > \varepsilon$ then the intersection of $A$ with the set measured in the left hand side of equation \eqref{eq: 2} has positive $\nu$-measure. This implies that there exists some $n\in \Z^d$ such that
\[\nu(T^{Q(n)+(km,0)}A \cap A)>0.\]
Of course $\nu$ is a $T^k$-ergodic component of $\mu$ so we also have that
\[\mu(T^{Q(n)+(km,0)}A \cap A)>0.\]
By equation \eqref{eq: 1} it follows that
\begin{equation*}
0<\mu(T^{Q(n)+(km,0)}A \cap A) \leq d^*\big(E' \cap \left(E' + Q(n)+(km,0)\right)\big),
\end{equation*}
which in particular establishes that $Q(n)+(km,0)\in E'-E'$, or equivalently the points
\begin{equation*}
x: = km-P(n)\quad \text{and}\quad  y:= n
\end{equation*}
are both in $E-E$.
Hence
\[  E-E + P(E-E) \ni x + P(y) = km.\]
Since $m$ was arbitrary the result follows.
\end{proof}
Theorem \ref{Theorem: Quantitative triangles} follows from the following dynamical consequence of Theorem \ref{Theorem: Quant Direct Exp} which is a quantitative strengthening of Theorem 1.4 in \cite{BjF}.
\begin{thm} \label{Theorem: Dynamical Prelim to Triangles}
For every integer $d\geq 2$ and every $\delta>0$ there exist positive integers $k_0 = k_0(\delta,d)$ and $m_0=m_0(\delta,d)$ such that the following holds. 
For every ergodic action $T:\Z^d \acts (X,\mu)$ and every set $A\subset X$ with $\mu(A)\geq \delta$ there exist some positive integers $k\leq k_0$, $m\leq m_0$ and a primitive vector $v\in \Z^d$ such that for every $v_1,\ldots,v_{d-1}\in \Z^d$ there exist $n_1,\ldots,n_{d-1}\in \Z$ with
\[\mu(A\cap T^{mv}A\cap T^{n_1v+ kv_1}A\cap \ldots \cap T^{n_{d-1}v+ k v_{d-1}}A)>0.\]
\end{thm}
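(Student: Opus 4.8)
The plan is to derive the theorem from Theorem~\ref{Theorem: Quant Direct Exp} together with a quantitative recurrence estimate, both applied inside the ergodic component furnished by that theorem. Fix $d\geq 2$ and $\delta>0$, and provisionally fix a small parameter $\varepsilon=\varepsilon(\delta,d)>0$ whose value will be pinned down in the second step. Given an ergodic action $T\colon \Z^d\acts (X,\mu)$ and $A\subset X$ with $\mu(A)\geq \delta$, Theorem~\ref{Theorem: Quant Direct Exp} supplies a positive integer $k\leq k_0(\delta,\varepsilon)$, a $T^k$-ergodic component $\nu$ of $\mu$ with $\nu(A)\geq \mu(A)\geq\delta$, and a primitive vector $v\in\Z^d$ such that, writing $B:=\bigcup_{n\in\Z}T^{nv}A$, we have $\nu(B)>1-\varepsilon$. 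Two features of $\nu$ will be used throughout: it is invariant under the subgroup $k\Z^d$, in particular under $T^{kv}$ and under every $T^{kv_i}$; and, by Proposition~\ref{Prop: Tk ergodic components}, it has the form $\nu(\cdot)=\mu(\cdot\cap C)/\mu(C)$ for some $k\Z^d$-invariant $C$, so that $\nu(S)>0$ implies $\mu(S)\geq \mu(C)\nu(S)>0$ for every $S\subset X$.

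The second step produces the integer $m$ together with a lower bound that does not involve $\varepsilon$. Since $\nu$ is $T^{kv}$-invariant, the sets $A,T^{kv}A,\ldots,T^{Lkv}A$ with $L:=\lceil 2/\delta\rceil$ all have $\nu$-measure $\geq\delta$, so $(L+1)\delta>1$ forces, via the Bonferroni inequality $\nu\big(\bigcup_j T^{jkv}A\big)\geq \sum_j\nu(T^{jkv}A)-\sum_{i<j}\nu(T^{ikv}A\cap T^{jkv}A)$, the existence of $0\leq i<j\leq L$ with $\nu(T^{ikv}A\cap T^{jkv}A)$ bounded below by an explicit constant $c(\delta)>0$ depending on $\delta$ alone. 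Translating by $T^{-ikv}$ and setting $m:=(j-i)k$ gives $\nu(A\cap T^{mv}A)\geq c(\delta)$ with $m\leq L\,k_0(\delta,\varepsilon)$. We now fix $\varepsilon=\varepsilon(\delta,d)$ small enough that $(d-1)\varepsilon<c(\delta)$; this retroactively legitimises the choices made so far and determines $k_0=k_0(\delta,d)$ and $m_0:=L\,k_0=m_0(\delta,d)$, with $k\leq k_0$ and $m\leq m_0$.

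Finally, fix arbitrary $v_1,\ldots,v_{d-1}\in\Z^d$. Because the constraints indexed by distinct $n_i$ are independent and $\bigcup_{n_i\in\Z}T^{n_iv+kv_i}A=T^{kv_i}B$, one has the set identity
\[ \bigcup_{n_1,\ldots,n_{d-1}\in\Z}\Big( A\cap T^{mv}A\cap\bigcap_{i=1}^{d-1}T^{n_iv+kv_i}A\Big)\;=\;A\cap T^{mv}A\cap\bigcap_{i=1}^{d-1}T^{kv_i}B. \]
The $k\Z^d$-invariance of $\nu$ yields $\nu\big((T^{kv_i}B)^c\big)=\nu(B^c)<\varepsilon$, so the right-hand side has $\nu$-measure at least $\nu(A\cap T^{mv}A)-(d-1)\varepsilon\geq c(\delta)-(d-1)\varepsilon>0$. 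Since the left-hand side is a countable union, some single term $A\cap T^{mv}A\cap\bigcap_{i=1}^{d-1}T^{n_iv+kv_i}A$ has positive $\nu$-measure, hence positive $\mu$-measure by the form of $\nu$ noted above, which is exactly the asserted conclusion.

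The only step carrying genuine content is the appeal to Theorem~\ref{Theorem: Quant Direct Exp}, which does the work of locating an almost fully expansive direction inside a quantitatively controlled ergodic component; the remainder is a quantitative recurrence estimate and elementary set algebra. The one point demanding care is the order in which parameters are fixed: $\varepsilon$ must be chosen as a function of $\delta$ and $d$ \emph{before} Theorem~\ref{Theorem: Quant Direct Exp} is invoked, which is consistent precisely because the recurrence constant $c(\delta)$ governing that choice depends on $\delta$ only.
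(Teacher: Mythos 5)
Your proposal is correct and follows essentially the same route as the paper: invoke Theorem~\ref{Theorem: Quant Direct Exp} with an $\varepsilon$ depending only on $\delta$ and $d$, locate a return time $m$ with $\nu(A\cap T^{mv}A)$ bounded below by a constant depending only on $\delta$, and finish with a union bound using the $k\Z^d$-invariance of $\nu$. The only (immaterial) difference is that you extract the recurrence constant via a Bonferroni/inclusion--exclusion bound where the paper uses a second-moment (Jensen) argument; both give $m\leq O(k_0/\delta)$ and a lower bound of order $\delta^2$.
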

\begin{proof}[Proof of Theorem \ref{Theorem: Dynamical Prelim to Triangles} via Theorem \ref{Theorem: Quant Direct Exp}] 
Let $T:\Z^d \acts (X,\mu)$ act ergodically and suppose $A\subset X$ has $\mu(A)\geq \delta$.
Set
\[\varepsilon: = \frac{\delta^2}{4d}\]
and apply Theorem \ref{Theorem: Quant Direct Exp} to obtain some positive integer $k\leq k_0(\mu(A),\varepsilon)$, a $T^k$-ergodic component $\nu$ of $\mu$ with $\nu(A)\geq \mu(A)$ and a primitive vector $v \in \Z^d$ such that
\begin{equation*}
\nu\left( \bigcup_{n\in \Z} T^{nv}A\right) > 1-\varepsilon.
\end{equation*}
We claim there exists some positive integer $m\leq \frac{2k}{\nu(A)}$ such that
\[\nu(A\cap T^{mv}A)>\frac{\nu(A)^2}{2}.\]
Indeed consider the sets $A,T^{kv}A,T^{2kv}A,\ldots,T^{k(M-1)v}A$, which all have $\nu$-measure equal to $\nu(A)$. If 
\[\nu(T^{ikv}A\cap T^{jkv})\leq \frac{\nu(A)^2}{2}\quad \text{for every }0\leq i<j\leq M-1,\]
then Jensen's inequality implies that
\begin{align*}
(M \nu(A))^2 = \left(\int \sum_{i=0}^{M-1} 1_{T^{ikv}A} \, d\nu \right)^2 &\leq \int \left(\sum_{i=0}^{M-1} 1_{T^{ikv}A}\right)^2 \, d\nu \\
&\leq M\nu(A) + \frac{M^2-M}{2} \nu(A)^2
\end{align*}
which is a contradiction if $M>\frac{2}{\nu(A)}$ say. Hence there exist some $0\leq i<j\leq \frac{2}{\nu(A)}$ such that
\[\nu(T^{ikv}A\cap T^{jkv}A) >\frac{\nu(A)^2}{2},\]
and since $k\Z^d$ preserves $\nu$ the claim follows with $m:=(j-i)k$.

For any $v_1,\ldots,v_{d-1} \in \Z^d$ set
\[ A_0:= A \cap T^{mv}A \quad \text{and} \quad A_i := \bigcup_{n\in \Z}T^{nv+kv_i}A \quad \text{for }i=1,\ldots,d-1.\]
Then $\nu(A_0)>\frac{\nu(A)^2}{2}$ and since $\nu$ is $k\Z^d$-invariant we also have that $\nu(A_i)>1-\varepsilon$ for $i=1,\ldots,d-1$. We then calculate
\begin{align*}
\nu(A_0\cap A_1\cap\ldots\cap A_{d-1}) &= 1-\nu(A_0^c \cup A_1^c \cup \ldots A_{d-1}^c)\\
&\geq 1- \sum_{i=0}^{d-1} \nu(A_i^c)\\
&\geq 1-\left((d-1)\varepsilon\right)-\left(1-\frac{\nu(A)^2}{2}\right)\\
&\geq \frac{\delta^2}{2}-(d-1)\varepsilon>0
\end{align*}
where the final inequality follows from our choice of $\varepsilon$. Using the definition of the $A_i$'s then we have shown that for any $v_1,\ldots ,v_{d-1}\in \Z^d$ there exist some $n_1,\ldots,n_{d-1}\in \Z$ such that
\begin{equation*}\label{eq: dynamical version of triangles}
\nu(A\cap T^{mv}\cap T^{n_1v+ kv_1}A\cap \ldots \cap T^{n_{d-1}v + k v_{d-1}}A)>0.
\end{equation*}
Since $\nu$ is a $T^k$-ergodic component of $\mu$ then the set measured in the above inequality also has positive $\mu$-measure. The theorem then follows with $m_0(\delta,d):= \frac{2}{\delta}k_0$.
\end{proof}
The following deduction of Theorem \ref{Theorem: Quantitative triangles} from Theorem \ref{Theorem: Dynamical Prelim to Triangles} is identical to the argument presented in \cite{BjF}, but we include it for completeness.
\begin{proof}[Proof of Theorem \ref{Theorem: Quantitative triangles} via Theorem \ref{Theorem: Dynamical Prelim to Triangles}]
Let $E\subset \Z^d$ have $d^*(E)>0$. By Proposition \ref{Prop: FCP} there exists an ergodic action $T:\Z^d \acts (X,\mu)$ and a set $A\subset X$ with $\mu(A)=d^*(E)$ satisfying
\begin{equation}\label{eq: FCP for triangles}
\mu \left(\bigcap_{v\in F} T^v A\right) \leq d^*\left(\bigcap_{v\in F} (E+v)\right) \quad \text{for every finite } F\subset \Z^d.
\end{equation}
If we combine equation \eqref{eq: FCP for triangles} with the conclusion of Theorem \ref{Theorem: Dynamical Prelim to Triangles} then we obtain positive integers $k\leq k_0(d^*(E),d)$, $m\leq m_0(d^*(E),d)$ and a primitive vector $v\in \Z^d$ such that for any $v_1,\ldots,v_{d-1}\in \Z^d$ there exist $n_1,\ldots,n_{d-1} \in \Z$ and some $v_0\in E$ such that
\begin{equation}\label{eq: vertex vectors in E}
v_0,\: \: v_0+mv, \: \: v_0+n_1v+kv_1,\: \:\ldots, \: \:v_0+n_{d-1}v+kv_{d-1} \in E.
\end{equation}
For any $d+1$ points $\lambda_0,\lambda_1,\ldots,\lambda_d \in \Z^d$ denote by $S(\lambda_0,\ldots,\lambda_d)$ the $d$-simplex with vertex set $\{\lambda_0,\ldots,\lambda_d\}$. That is $S(\lambda_0,\ldots,\lambda_d)$ is the convex hull of the points $\{\lambda_0,\ldots,\lambda_d\}$. The signed volume of a $d$-simplex $S(\lambda_0,\ldots,\lambda_d)$ can be calculated via the formula\footnote{For a proof of this fact see \cite{Stein}.}
\begin{equation*}
\mathrm{Vol}_d(S(\lambda_0,\ldots,\lambda_d))= \frac{\mathrm{det}(\lambda_1-\lambda_0,\lambda_2-\lambda_0,\ldots,\lambda_d-\lambda_0)}{d!}.
\end{equation*}
Hence if we denote by $\mathrm{VolSpec_d(E)}$ the set of all signed volumes of $d$-simplices whose vertex set is contained in $E$, then equation \eqref{eq: vertex vectors in E} implies that for any $v_1,\ldots,v_{d-1}\in \Z^d$ there exist $n_1,\ldots,n_{d-1} \in \Z$ so that
\begin{align}
&\frac{\mathrm{det}(mv,n_1v+kv_1,\ldots, n_{d-1}v+kv_{d-1})}{d!} \nonumber \\
&\qquad \qquad = mk^{d-1}\frac{\mathrm{det}(v,v_1,\ldots,v_{d-1})}{d!}\in \mathrm{VolSpec_d(E)}. \label{eq: area in volspeec}
\end{align}
It is known\footnote{See for instance Section II, Chapter 5 in \cite{Newman}.} that $v$ being primitive ensures there exists $v_1',v_2',\ldots,v_{d-1}' \in \Z^d$ for which
\[\mathrm{det}(v,v_1',\ldots,v_{d-1}')=1.\]
It follows that for any integer $l\in \Z$ we can pick
\[v_1 = lv_1', \: \: v_2 = v_2', \: \: \ldots \: \:, v_{d-1}=v_{d-1}'\]
in equation \eqref{eq: area in volspeec} to conclude that
\[l\frac{mk^{d-1}}{d!}\in \mathrm{VolSpec}_d(E).\]
Setting $K:= mk^{d-1}$ then the above readily implies that
\[ K\Z \subset \mathrm{VolSpec}_d(E).\]
Since $K\leq m_0 k_0^{d-1}$ then $K$ is bounded in terms of $\delta$ and $d$ as required.
\end{proof}

\section{\textbf{The measure increment argument}}\label{Section: Measure Increment}
Let $T: \Z^d \acts (X,\mu)$ be an ergodic action and let $A\subset X$ have $\mu(A)>0$. Bochner's theorem says that there exists a unique finite Borel measure $\sigma$ on $\T^d:= \R^d / \Z^d$ satisfying
\begin{equation}
\mu(A\cap T^{v}A)= \int_{\T^d} e(v\cdot\alpha)\, d\sigma(\alpha) \quad \text{for every }v\in \Z^d,
\end{equation}
where
\[e(x):=\exp(2\pi i x)\]
and $\cdot$ is the standard dot product.
We call $\sigma$ the spectral measure of $A$.
Any rational $\alpha \in \T^d$ can be uniquely written in the form
\[ \alpha = \left(\frac{p_1}{q_1},\ldots,\frac{p_d}{q_d}\right)\]
for integers $0\leq p_i < q_i$ with $\gcd(p_i,q_i)=1$ for each $i=1, \ldots, d$. For any rational $\alpha$ we use this form to define
\[\denom(\alpha):=\lcm(q_1,\ldots,q_d),\]
and for a positive integer $M$ we set
\[\mathrm{Rat}(M): = \{\text{rational } \alpha \in \T^d \setminus \{0\}\, : \, \denom(\alpha) \leq M\}. \]
Both proofs of Theorems \ref{Theorem: Quant Direct Exp} and \ref{Theorem: Quant Poly Exp} proceed by a measure increment argument which is a direct ergodic theoretic analogue of the now ubiquitous density increment argument first used by Roth in \cite{Roth}. This measure increment argument relies on two key observations. The first observation says the only obstruction to $A$ being sufficiently directionally or polynomially expandable is if $\sigma$ gives a large amount of mass to rationals with small denominator, i.e. if $\sigma(\Rat(M))$ is large for some integer $M>0$. The next two propositions formalise this observation.
\begin{pro}\label{Proposition: Small Rat(M) gives directional conclusion}
Let $\delta>0$ and $\varepsilon>0$. There exists a positive integer $M=M(\delta,\varepsilon)$ and a positive constant $\kappa = \kappa(\delta,\varepsilon)$ such that the following holds.
If $T:\Z^d \acts (X,\mu)$ is an ergodic action and $A\subset X$ has $\mu(A)\geq \delta$, spectral measure $\sigma$ and
\[\sigma(\Rat(M))< \kappa,\]
then there exists some primitive vector $v\in \Z^d$ for which
\[ \mu\left(\bigcup_{n\in\Z} T^{nv}A\right)> 1-\varepsilon.\]
\end{pro}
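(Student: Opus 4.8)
The plan is to convert directional expansivity of $A$ into a concentration statement about its spectral measure $\sigma$, and then to locate a good direction by averaging over all primitive vectors in a large cube.

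\textbf{Step 1: reduction to a spectral estimate.} Fix a primitive $v\in\Z^d$ and let $g_v:=\mathbb E[1_A\mid\mathcal I_v]$, where $\mathcal I_v$ denotes the $\sigma$-algebra of $T^v$-invariant sets. By the mean ergodic theorem $g_v=L^2\text{-}\lim_{N\to\infty}\frac1{2N+1}\sum_{|n|\le N}1_{T^{nv}A}$, and (a standard fact about invariant $\sigma$-algebras) $\{g_v>0\}$ agrees, up to a $\mu$-null set, with the smallest $T^v$-invariant set containing $A$, namely $\bigcup_{n\in\Z}T^{nv}A$. Writing $v^\perp:=\{\alpha\in\T^d:\ v\cdot\alpha=0\}$, the spectral identity together with the fact that the normalized Dirichlet kernels $\frac1{2N+1}\sum_{|n|\le N}e(n\theta)$ converge boundedly to $1_{\Z}(\theta)$ gives, by dominated convergence, $\int g_v\,d\mu=\mu(A)$ and $\int g_v^2\,d\mu=\sigma(v^\perp)$. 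Applying Cauchy--Schwarz to $\int g_v\,d\mu=\int g_v\cdot 1_{\{g_v>0\}}\,d\mu$ yields
\[ \mu\Big(\bigcup_{n\in\Z}T^{nv}A\Big)=\mu(g_v>0)\ \ge\ \frac{\mu(A)^2}{\sigma(v^\perp)}.\]
Since $T$ is ergodic, the mean ergodic theorem for the full $\Z^d$-action (applied to the Fej\'er average of the sequence $v\mapsto\mu(A\cap T^vA)$) forces $\sigma(\{0\})=\mu(A)^2$. Hence, assuming $\varepsilon<1$ (else there is nothing to prove) and setting $\eta:=\varepsilon\delta^2$, it suffices to find a primitive $v$ with $\sigma\big(v^\perp\setminus\{0\}\big)<\eta$: then $\sigma(v^\perp)<\mu(A)^2+\eta\le\mu(A)^2/(1-\varepsilon)$ because $\mu(A)\ge\delta$, and the displayed bound exceeds $1-\varepsilon$.

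\textbf{Step 2: peeling off $\Rat(M)$ and averaging over directions.} Partition $\T^d\setminus\{0\}=\Rat(M)\ \sqcup\ Y_M$, where $Y_M$ consists of the $\alpha\neq0$ that are either irrational or rational with $\denom(\alpha)>M$. For \emph{every} primitive $v$ we have $\sigma\big(v^\perp\cap\Rat(M)\big)\le\sigma(\Rat(M))<\kappa$, so setting $\kappa:=\eta/2$ handles that part. For the rest, to $\alpha\in\T^d$ associate the subgroup $L_\alpha:=\{v\in\Z^d:\ v\cdot\alpha=0\}$, which has index $\denom(\alpha)$ when $\alpha$ is rational and rank $\le d-1$ otherwise, so $\alpha\in v^\perp\iff v\in L_\alpha$. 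A geometry-of-numbers estimate for lattice points in cubes --- via successive minima, using that a nonzero vector of $\Z^d$ has sup-norm $\ge1$ and that the $d$-th successive minimum of $L_\alpha$ with respect to the unit cube is $\gtrsim_d\denom(\alpha)^{1/d}$ --- gives, for all $\alpha\neq0$ and all $N$, with the convention $\denom(\alpha)=\infty$ in the irrational case,
\[ \frac{\labs{L_\alpha\cap[-N,N]^d}}{(2N+1)^d}\ \le\ \frac1N+c_d\,\denom(\alpha)^{-1/d}\]
for a constant $c_d>0$. Let $\Lambda_N$ be the set of primitive vectors in $[-N,N]^d$; since primitive vectors have positive density there is $c_d'>0$ with $\labs{\Lambda_N}\ge c_d'(2N+1)^d$ for all large $N$. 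Averaging $\sigma(v^\perp\cap Y_M)$ over $v\in\Lambda_N$ and interchanging sum and integral,
\begin{align*}
\frac1{\labs{\Lambda_N}}\sum_{v\in\Lambda_N}\sigma\big(v^\perp\cap Y_M\big)
&\le\frac1{c_d'}\int_{Y_M}\Big(\frac1N+c_d\,\denom(\alpha)^{-1/d}\Big)\,d\sigma(\alpha)\\
&\le\frac1{c_d'}\Big(\frac1N+\frac{c_d}{M^{1/d}}\Big),
\end{align*}
using $\denom(\alpha)>M$ on $Y_M$ and $\sigma(Y_M)\le\sigma(\T^d)=\mu(A)\le1$. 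Choosing $M=M(\delta,\varepsilon)$ so large that $c_d/(c_d'M^{1/d})<\eta/4$ and then $N$ so large that $1/(c_d'N)<\eta/4$, this average is $<\eta/2$, so some $v\in\Lambda_N$ satisfies $\sigma(v^\perp\cap Y_M)<\eta/2$. Combined with $\sigma(v^\perp\cap\Rat(M))<\kappa=\eta/2$, this primitive $v$ has $\sigma(v^\perp\setminus\{0\})<\eta$, and Step 1 finishes the proof.

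\textbf{The main obstacle.} The functional-analytic reduction of Step 1 and the bookkeeping are routine. The real content is the uniform lattice-point bound: one needs control of $\labs{L_\alpha\cap[-N,N]^d}$ that is good both when $\denom(\alpha)$ is huge and comparable to or larger than $N$ (where the naive ``density $\approx1/\denom(\alpha)$'' heuristic fails because boundary contributions dominate) and when $\denom(\alpha)$ is large but composed of small primes (so that $L_\alpha$ lies in an index-$p$ subgroup with $p$ small). Phrasing the bound through the successive minima of $L_\alpha$ handles both regimes at once, at the cost of the weaker exponent $\denom(\alpha)^{-1/d}$ in place of $\denom(\alpha)^{-1}$, which is harmless since we only need it to tend to $0$. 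Carrying out the elementary geometry-of-numbers inputs and the primitive-vector count $\labs{\Lambda_N}\gtrsim_d N^d$ cleanly is where attention is needed.
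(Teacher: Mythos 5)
Your proof is correct, and your Step 1 --- the Cauchy--Schwarz/mean-ergodic-theorem bound $\mu\big(\bigcup_{n}T^{nv}A\big)\ge\mu(A)^2/\sigma(L_v^\perp)$ together with $\sigma(\{0\})=\mu(A)^2$ --- is exactly the reduction the paper uses, imported there as Lemma \ref{Lemma: Directional orbit bounded from below by spectral mass of line} (from \cite{BjF}) and Lemma \ref{Lemma: Spectral mass is L2}. Where you genuinely diverge is in producing a primitive $v$ with $\sigma(L_v^\perp\setminus\{0\})$ small. The paper fixes a \emph{haystack} $H$, an infinite set of primitive vectors any $d$ of which are linearly independent, notes that for distinct $v_1,\ldots,v_d\in H_M$ the intersection $\bigcap_i L_{v_i}^\perp$ lies in $\Rat(M)$, so each $\alpha\notin\Rat(M)\cup\{0\}$ belongs to at most $d-1$ of the annihilators, and then averages over the finite set $H_M$ to get some $v$ with $\sigma\big(L_v^\perp\setminus(\Rat(M)\cup\{0\})\big)\le(d-1)/|H_M|$. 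You instead average over \emph{all} primitive vectors in $[-N,N]^d$ and control $|L_\alpha\cap[-N,N]^d|$ by successive minima. Your counting estimate is true: Minkowski's second theorem gives $\lambda_d\gg_d\denom(\alpha)^{1/d}$ for the index-$\denom(\alpha)$ lattice $L_\alpha$ (and rank $\le d-1$ in the irrational case), and the standard bound $|L\cap NQ|\ll_d\prod_i\max(1,N/\lambda_i)$ yields your display, up to a harmless $d$-dependent constant that should also sit on the $1/N$ term; together with the positive density of primitive vectors for $d\ge 2$ this closes the argument (for $d=1$ the density count degenerates, but there $L_v^\perp=\{0\}$ and Step 1 alone finishes). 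The trade-off: the haystack device is softer, its entire counting input being the one-line observation $\sum_{v\in H_M}1_{L_v}\le d-1$, whereas your route leans on a genuine geometry-of-numbers lemma that you assert rather than prove; in exchange you obtain the stronger conclusion that a positive proportion of all primitive directions in a large box are expansive, rather than one direction drawn from a sparse prepared family.
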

\begin{pro}\label{Proposition: Small Rat(M) gives polynomial conclusion}
Let $\delta>0$ and $\varepsilon>0$. Let $P=(P_1,\ldots,P_d): \Z^r \to \Z^d$ be an integer polynomial in $r$-variables with zero constant term such that the component polynomials are linearly independent. There exists a positive integer $M=M(\delta,\varepsilon,P)$ such that the following is true. If $T:\Z^d \acts (X,\mu)$ is an ergodic action and $A\subset X$ has $\mu(A)\geq \delta$, spectral measure $\sigma$ and
\[\sigma(\Rat(M))< \frac{\mu(A)^2\varepsilon^2}{4},\]
then
\[ \mu\left(\bigcup_{n\in\Z^r} T^{P(n)}A\right)> 1-\varepsilon.\]
\end{pro}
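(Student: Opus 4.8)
The plan is to run a second-moment argument that reduces the statement to an estimate for a single spectral integral, and then to control that integral using the hypothesis on $\sigma(\Rat(M))$ together with a Weyl-type equidistribution estimate. Set $B:=\bigcup_{n\in\Z^r}T^{P(n)}A$ and, for $N\geq 1$, put $\Phi_N:=[-N,N]^r\cap\Z^r$ and $f_N:=\sum_{n\in\Phi_N}1_{T^{P(n)}A}$. Since $\{f_N>0\}\subseteq B$, Cauchy--Schwarz gives
\[
\mu(B)\;\geq\;\mu(\{f_N>0\})\;\geq\;\frac{\left(\int f_N\,d\mu\right)^2}{\int f_N^2\,d\mu}\;=\;\frac{|\Phi_N|^2\,\mu(A)^2}{\sum_{n,m\in\Phi_N}\mu\!\left(A\cap T^{P(m)-P(n)}A\right)}.
\]
By the defining property of the spectral measure, $\sum_{n,m\in\Phi_N}\mu(A\cap T^{P(m)-P(n)}A)=\int_{\T^d}\big|\sum_{n\in\Phi_N}e(P(n)\cdot\alpha)\big|^2\,d\sigma(\alpha)$, and Weyl's theorem ensures that for every $\alpha\in\T^d$ the averages $\tfrac{1}{|\Phi_N|}\sum_{n\in\Phi_N}e(P(n)\cdot\alpha)$ converge as $N\to\infty$ to some $F(\alpha)$ with $|F(\alpha)|\leq 1$. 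As $\sigma$ is finite, dominated convergence gives
\[
\frac{1}{|\Phi_N|^2}\sum_{n,m\in\Phi_N}\mu\!\left(A\cap T^{P(m)-P(n)}A\right)\;\xrightarrow[N\to\infty]{}\;\int_{\T^d}|F(\alpha)|^2\,d\sigma(\alpha)=:L,
\]
so $\mu(B)\geq\mu(A)^2/L$. It therefore suffices to prove $L<\mu(A)^2(1+\varepsilon^2/2)$, since then $\mu(B)>\bigl(1+\varepsilon^2/2\bigr)^{-1}\geq 1-\varepsilon^2/2>1-\varepsilon$ (we may assume $0<\varepsilon<1$).

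I would bound $L$ by splitting the integral over $\{0\}$, over $\Rat(M)$, and over $\T^d\setminus(\{0\}\cup\Rat(M))$. At the origin, $F(0)=1$ and, by the mean ergodic theorem together with ergodicity, $\sigma(\{0\})=\lim_N\tfrac{1}{|Q_N|}\sum_{w\in Q_N}\mu(A\cap T^wA)=\mu(A)^2$, so this piece contributes exactly $\mu(A)^2$. Over $\Rat(M)$ we use $|F|\leq 1$ and the hypothesis to bound the contribution by $\sigma(\Rat(M))<\mu(A)^2\varepsilon^2/4$. The remaining piece is handled by the following Weyl-type estimate, which I regard as the crux: \emph{there is a function $\rho_P\colon\N\to[0,1]$, depending only on $P$, with $\rho_P(M)\to 0$ as $M\to\infty$, such that $|F(\alpha)|\leq\rho_P(M)$ whenever $\alpha\in\T^d\setminus(\{0\}\cup\Rat(M))$}. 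Granting this, the remaining piece contributes at most $\rho_P(M)^2\,\sigma(\T^d)\leq\rho_P(M)^2$, so choosing $M=M(\delta,\varepsilon,P)$ so large that $\rho_P(M)\leq\delta\varepsilon/2$ yields $\rho_P(M)^2\leq\delta^2\varepsilon^2/4\leq\mu(A)^2\varepsilon^2/4$, and hence $L<\mu(A)^2+\mu(A)^2\varepsilon^2/4+\mu(A)^2\varepsilon^2/4=\mu(A)^2(1+\varepsilon^2/2)$, as required.

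The main obstacle is the Weyl-type estimate. Since $P$ has zero constant term, $P(n)\cdot\alpha$ is a polynomial in $n$ with zero constant term whose coefficient of a monomial $n^j$ equals $\sum_{i=1}^d c_{i,j}\alpha_i$, where $c_{i,j}\in\Z$ is the coefficient of $n^j$ in $P_i$. Linear independence of $P_1,\dots,P_d$ forces the integer matrix $(c_{i,j})$ to have rank $d$, so $d$ suitable integer-coefficient linear combinations of $\alpha_1,\dots,\alpha_d$ form a system that is invertible over $\Q$; hence if all nonconstant coefficients of $P(n)\cdot\alpha$ lie in $\Q/\Z$ then, after lifting $\alpha$ to $\R^d$, $\alpha\in\Q^d/\Z^d$. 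By Weyl's theorem $F(\alpha)=0$ unless those coefficients are all rational, so $F$ vanishes off $\Q^d/\Z^d$; and for rational $\alpha$ of denominator $q$ one has the closed form $F(\alpha)=q^{-r}\sum_{n\in(\Z/q\Z)^r}e(P(n)\cdot\alpha)$, a complete exponential sum. A multivariable Weyl inequality, combined with multiplicativity of complete exponential sums across prime powers — which is exactly the input supplied in the one-variable setting of \cite{BuF} — yields a power-saving bound $O_P(q^{-c_P})$ for this normalized sum for all $\alpha$ outside a finite exceptional set whose denominators are bounded in terms of $P$ alone; once $M$ exceeds that bound the exceptional set lies inside $\Rat(M)$, and taking $\rho_P(M):=\sup\{|F(\alpha)|:\alpha\in\T^d\setminus(\{0\}\cup\Rat(M))\}$ completes the argument. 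Only the verification of the power-saving bound in several variables requires genuine work; the rest is bookkeeping.
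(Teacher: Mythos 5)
Your second-moment framework is sound and is a legitimate alternative to the paper's argument (the paper instead argues by contradiction, pairing the ergodic average $\frac1N\sum_n T^{Q(n)}1_A$ against the indicator of a complement set $B$ with $\mu(B)\geq\varepsilon$ and splitting the spectral expansion into $q=1$, $1<q\leq M$ and $q>M$ — the same trichotomy you use). Your reductions are correct: the Cauchy--Schwarz lower bound, the identification of the second moment with $\int|\sum_n e(P(n)\cdot\alpha)|^2\,d\sigma$, the value $\sigma(\{0\})=\mu(A)^2$, the vanishing of $F$ off $\Q^d/\Z^d$ via the rank-$d$ coefficient matrix, and the closed form $F(\alpha)=q^{-r}\sum_{n\in(\Z/q\Z)^r}e(P(n)\cdot\alpha)$ for rational $\alpha$.

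However, the statement you isolate as ``the crux'' — that $|F(\alpha)|\leq\rho_P(M)\to0$ uniformly for rational $\alpha$ with $\denom(\alpha)>M$ — is precisely the hard content of the proposition, and you do not prove it; asserting that ``a multivariable Weyl inequality \ldots yields a power-saving bound'' is not bookkeeping. The difficulty is twofold. First, $\denom(\alpha)=q$ being large does not imply that the polynomial $P(n)\cdot\alpha$ has coefficients with large denominator: the integer vector of numerators could share a large common factor with $q$, collapsing the effective denominator. This is exactly the ``multiplicative complexity'' obstruction, and it is here that linear independence must be used quantitatively (the paper's Lemma \ref{Lemma: Independence implies bounded complexity}, via Smith normal form, shows the gcd loss is bounded by an invariant factor of the coefficient matrix). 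Second, one still needs the resulting one-parameter complete sum bound (Hua's inequality, imported as Lemma \ref{Lemma: Hua} from \cite{BuF}), which is stated only for single-variable polynomials. The paper sidesteps the multivariable issue entirely by substituting $x_j\mapsto n^{(D+1)^j}$: by uniqueness of base-$(D+1)$ expansions this is injective on the monomials of $P$, so the resulting one-variable polynomial $Q:\Z\to\Z^d$ has the same coefficients as $P$ (hence linearly independent components and bounded multiplicative complexity), and since $\bigcup_{n\in\Z}T^{Q(n)}A\subseteq\bigcup_{n\in\Z^r}T^{P(n)}A$ nothing is lost by proving expansivity along $Q$ instead. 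You could import this substitution directly into your second-moment argument (replace $f_N$ by $\sum_{n=0}^{N-1}1_{T^{Q(n)}A}$) and then your proof closes using only the one-variable estimates already available; as written, the several-variable power-saving bound remains an unproven claim on which the whole argument rests.
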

If the spectral measure does not give small mass to rationals with small denominator, then the second observation allows us to obtain a measure increment of $A$ with respect to a $T^k$-ergodic component. 
\begin{lem}\label{Lemma: Increment}
Let $T:\Z^d \acts (X,\mu)$ be an ergodic action and let $A\subset X$ have $\mu(A)>0$ and spectral measure $\sigma$. For any positive integer $M$ there exists a positive integer $k\leq M!$ and some $T^k$-ergodic component $\nu$ of $\mu$ such that
\[\nu(A)\geq \sqrt{\mu(A)^2 + \sigma(\Rat(M))}.\]
\end{lem}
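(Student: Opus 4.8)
The plan is to extract the required $T^k$-ergodic component by a second-moment argument on the conditional expectation of $1_A$ onto the $T^k$-ergodic decomposition, and then to bound that second moment from below by the spectral mass that $\sigma$ places on the torsion points of $\T^d$, exploiting that any element of $\Rat(M)$ is annihilated by multiplication by $M!$.

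Concretely, I would set $k:=M!$ (so $k\le M!$ holds trivially) and apply Proposition~\ref{Prop: Tk ergodic components} to obtain the $T^k$-ergodic components $\nu_1,\dots,\nu_n$ with disjoint $k\Z^d$-invariant supports $C_1,\dots,C_n$; from $\mu=\tfrac1n\sum_i\nu_i$ and $\nu_i(\cdot)=\mu(\cdot\cap C_i)/\mu(C_i)$ one reads off $\mu(C_i)=1/n$ for each $i$ and that the $C_i$ partition $X$ modulo a $\mu$-null set. Letting $\mathcal C$ be the finite $\sigma$-algebra they generate, $\mathbb E[1_A\mid\mathcal C]=\sum_{i=1}^n\nu_i(A)1_{C_i}$, and hence
\[
\big\|\mathbb E[1_A\mid\mathcal C]\big\|_{L^2(\mu)}^2=\frac1n\sum_{i=1}^n\nu_i(A)^2\le\max_{1\le i\le n}\nu_i(A)^2 .
\]
So it is enough to show $\big\|\mathbb E[1_A\mid\mathcal C]\big\|_{L^2(\mu)}^2\ge\mu(A)^2+\sigma(\Rat(M))$ and then take $\nu$ to be a component attaining the maximum.

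For the lower bound, let $\mathcal I$ be the $\sigma$-algebra of $k\Z^d$-invariant sets. Ergodicity of each $\nu_i$ for the $k\Z^d$-action forces any $k\Z^d$-invariant set to agree $\mu$-a.e. with a union of some of the $C_i$, so $\mathcal I\subseteq\mathcal C$ modulo null sets; as conditioning onto a coarser $\sigma$-algebra contracts the $L^2$ norm, $\big\|\mathbb E[1_A\mid\mathcal C]\big\|_2^2\ge\big\|\mathbb E[1_A\mid\mathcal I]\big\|_2^2=\langle\mathbb E[1_A\mid\mathcal I],1_A\rangle$. By the mean ergodic theorem applied to the (amenable) subgroup $k\Z^d$, averaging $1_A\circ T^{kv}$ over $v\in Q_N$ converges in $L^2$ to $\mathbb E[1_A\mid\mathcal I]$; pairing with $1_A$ and inserting Bochner's identity $\mu(A\cap T^{kv}A)=\int_{\T^d}e(kv\cdot\alpha)\,d\sigma(\alpha)$ gives
\[
\big\|\mathbb E[1_A\mid\mathcal I]\big\|_2^2=\lim_{N\to\infty}\int_{\T^d}\Big(\frac1{|Q_N|}\sum_{v\in Q_N}e(kv\cdot\alpha)\Big)\,d\sigma(\alpha)=\sigma\big(\{\alpha\in\T^d:k\alpha=0\}\big),
\]
where the final equality is dominated convergence, the averages converging pointwise to $1_{\{k\alpha=0\}}$ while staying bounded by $1$. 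Finally, $\{\alpha:k\alpha=0\}\supseteq\{0\}\sqcup\Rat(M)$ since $\denom(\alpha)\le M$ implies $\denom(\alpha)\mid M!=k$; and repeating the previous display with $k=1$, using ergodicity of $T$ (so that $\mathbb E[1_A\mid\text{invariants}]=\mu(A)$), gives $\sigma(\{0\})=\mu(A)^2$. Since $\sigma\ge0$, chaining the inequalities yields $\max_i\nu_i(A)^2\ge\mu(A)^2+\sigma(\Rat(M))$, and taking square roots completes the proof.

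The main point requiring care is the identity $\big\|\mathbb E[1_A\mid\mathcal I]\big\|_2^2=\sigma(\{\alpha:k\alpha=0\})$ — i.e. that the $k\Z^d$-invariant part of $1_A$ carries precisely the spectral mass at the $k$-torsion of $\T^d$ — together with the correct matching of the $T^k$-ergodic decomposition (finitely many components, each of $\mu$-mass $1/n$) with the $k\Z^d$-invariant $\sigma$-algebra; both are standard spectral theory for unitary $\Z^d$-representations, and the remainder is bookkeeping together with the elementary observation that torsion of order at most $M$ is annihilated by $M!$.
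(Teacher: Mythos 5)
Your proof is correct, and its overall architecture is the same as the paper's: bound $\max_i\nu_i(A)^2$ from below by the squared $L^2$-norm of the projection of $1_A$ onto the $M!\Z^d$-invariant functions, identify that quantity with the spectral mass on the $M!$-torsion points of $\T^d$, observe that $\Rat(M)\cup\{0\}$ sits inside the $M!$-torsion, and finish by pigeonhole over the finitely many components. The one place you genuinely diverge is the middle step: the paper expands $1_A$ in an orthonormal basis of rational eigenfunctions, uses $\langle \mathcal{P}_{\Eig_T(\alpha)}1_A,1_A\rangle=\sigma(\{\alpha\})$ pointwise and then Parseval together with the containment $\Eig_T(R(M!))\subset L^2(X,\mu)^{T^{M!}}$ to get the inequality $\sigma(\Rat(M))+\mu(A)^2\leq\lVert \mathcal{P}_{L^2(X,\mu)^{T^{M!}}}1_A\rVert_2^2$, whereas you compute $\lVert\mathbb{E}[1_A\mid\mathcal I]\rVert_2^2$ directly via the mean ergodic theorem for the subgroup $k\Z^d$ and Bochner's formula, obtaining the exact identity $\lVert\mathbb{E}[1_A\mid\mathcal I]\rVert_2^2=\sigma(\{\alpha: M!\alpha=0\})$ with no eigenfunction basis needed. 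Your route is slightly cleaner and yields an equality where the paper only needs an inequality; the paper's route has the advantage that the per-point identity $|\langle 1_A,f_\alpha\rangle|^2=\sigma(\{\alpha\})$ is reused later (e.g.\ in the proof of Proposition \ref{Proposition: Small Rat(M) gives polynomial conclusion}), so it is established once and quoted. All the supporting details you invoke (each $\mu(C_i)=1/n$, the $k\Z^d$-invariant $\sigma$-algebra agreeing mod null sets with the one generated by the $C_i$, and $\sigma(\{0\})=\mu(A)^2$) check out.
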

\section{\bf{Proofs of Theorems \ref{Theorem: Quant Poly Exp} and \ref{Theorem: Quant Direct Exp}}}\label{Section: Proof of Quant Expan Theorems}
\begin{proof}[Proof of Theorem \ref{Theorem: Quant Poly Exp} via Proposition \ref{Proposition: Small Rat(M) gives polynomial conclusion} and Lemma \ref{Lemma: Increment}]
Let $T:\Z^d \acts (X,\mu)$ act ergodically and let $A\subset X$ have $\mu(A)>0$ and spectral measure $\sigma$. Fix $\varepsilon>0$ and let $P:\Z^r \to \Z^d$ be an integer polynomial with zero constant term and linearly independent components. Either the conclusion holds with $\nu = \mu$ or
\[ \mu \left( \bigcup_{n\in\Z^r}T^{P(n)}A\right)\leq 1-\varepsilon.\]
In the latter case Proposition \ref{Proposition: Small Rat(M) gives polynomial conclusion} ensures the existence of some positive integer $M_1$ such that
\[\sigma(\Rat(M_1))\geq \kappa_1=: \frac{\mu(A)^2\varepsilon^2}{4}.\]
By Lemma \ref{Lemma: Increment} then there exists some positive integer $k_1\leq M_1!$ and a $T^{k_1}$-ergodic component $\nu_1$ of $\mu$ such that
\[\nu_1(A) \geq \sqrt{\mu(A)^2 + \kappa_1} \geq \mu(A) + \frac{\kappa_1}{3}.\]
Either the conclusion holds with $k=k_1$ and $\nu=\nu_1$ or
\begin{equation}\label{eq: passing to next component poly}
1-\varepsilon \geq \nu_1 \left( \bigcup_{n\in\Z^r}T^{P(n)}A\right)\geq \nu_1\left( \bigcup_{n\in\Z^r}T^{P(k_1n)}A\right).
\end{equation}
Assume we are in the latter case. Since $P(0)=0$ then we can define another integer polynomial by 
$P^1(n):=P(k_1 n)/k_1$. Clearly $P^1(0)=0$. We claim that the components of $P^1$ are linearly independent. Indeed suppose some $a_1,\ldots,a_d \in \R$ have that
\[ 0=\sum_{i=1}^d a_i P^1_i(n)\quad \text{for all }n\in \Z^r.\]
Then by definition of $P^1$ we also have that
\[0= \sum_{i=1}^d a_i P_i(n) \quad \text{for all }n\in k_1\Z^r.\]
Only the zero polynomial can vanish on an entire lattice\footnote{See for example \cite{Alon}[Lemma 2.1].} and so we must conclude that
\[0= \sum_{i=1}^d a_i P_i,\]
which by linear independence of $P_1,\ldots,P_d$ implies that $a_1=\ldots=a_d=0$, proving the claim. If we denote the sub-action of $k_1\Z^d$ by $T_1$, that is
\[T_1^v = T^{k_1v} \quad \text{for all }v\in \Z^d,\]
then equation \eqref{eq: passing to next component poly} reads
\[ \nu_1 \left(\bigcup_{n\in\Z^r}T_1^{P^1(n)}A\right)  \leq 1-\varepsilon.\]
Since $T_1$ is ergodic with respect to $\nu_1$ then we can apply Proposition \ref{Proposition: Small Rat(M) gives polynomial conclusion} again to obtain some integer $M_2$ for which
\[\sigma_1(\Rat(M_2)) \geq \kappa_2 =: \frac{\nu_1(A)^2\varepsilon^2}{4}\]
where $\sigma_1$ is the spectral measure of $A$ with respect to $T_1: \Z^d \acts (X,\nu_1)$. By Lemma \ref{Lemma: Increment} there exists some positive integer $k_2 \leq M_2!$ and a $T_1^{k_2}$-ergodic component $\nu_2$ of $\nu_1$ with
\[\nu_2(A)\geq \nu_1(A) + \frac{\kappa_2}{3} \geq \mu(A) + \frac{\kappa_1}{3}+ \frac{\kappa_2}{3}.\]
It is easy to see that $\nu_2$ is a $T^{k_1 k_2}$-ergodic component of $\mu$, so either the conclusion holds with $k=k_1k_2$ and $\nu=\nu_2$ or
\[1-\varepsilon \geq \nu_2\left(\bigcup_{n\in \Z^r} T^{P(n)}A\right).\]
In the latter case we can then define $P^2(n):=\frac{P(k_1 k_2n)}{k_1k_2}$ and $(T_2^v)_{v\in \Z^d}:=(T^{k_1k_2 v})_{v\in \Z^d}$ to see that
\[ 1-\varepsilon \geq \nu_2\left(\bigcup_{n\in \Z^r} T_2^{P^2(n)}A\right)\]
and so on. If we set $\nu_0:=\mu$, then each $\kappa_i$ is of the form
\[ \kappa_i = \frac{\nu_{i-1}(A)^2\varepsilon^2}{4} \geq \frac{\mu(A)^2\varepsilon^2}{4}.\]
As $\nu_i(A)$ cannot exceed $1$ then this process must end in a finite number of steps $R$ bounded in terms of $\delta$ and $\varepsilon$. When the process terminates the conclusion of the theorem must hold with $k=k_1k_2\ldots k_R \leq M_1! M_2!\ldots M_R!$ and some $T^k$-ergodic component $\nu_R$ of $\mu$. Since each $M_i$ depended only on $\nu_i(A),\varepsilon$, and $P^i$, which in turn only depend on $\mu(A)$, $\varepsilon$ and $P$, then $k$ is bounded in terms of $\mu(A),\varepsilon$, and $P$ as claimed.
\end{proof}

\begin{proof}[Proof of Theorem \ref{Theorem: Quant Direct Exp} via Proposition \ref{Proposition: Small Rat(M) gives directional conclusion} and Lemma \ref{Lemma: Increment}]
Let $T:\Z^d \acts (X,\mu)$ act ergodically and let $A\subset X$ have $\mu(A)>0$ and spectral measure $\sigma$. Fix $\varepsilon>0$. Either the conclusion holds with $\nu = \mu$ or
\[ \mu \left( \bigcup_{n\in\Z}T^{nv}A\right) \leq 1-\varepsilon \quad \text{for every }v\in\Z^d.\]
In the latter case Proposition \ref{Proposition: Small Rat(M) gives directional conclusion} ensures the existence of some positive integer $M_1$ and some positive $\kappa = \kappa(\mu(A),\varepsilon)$ such that
\[\sigma(\Rat(M_1))\geq \kappa.\]
By Lemma \ref{Lemma: Increment} then there exists some positive integer $k_1\leq M_1!$ and a $T^{k_1}$-ergodic component $\nu_1$ of $\mu$ such that
\[\nu_1(A) \geq \sqrt{\mu(A)^2 + \kappa} \geq \mu(A) + \frac{\kappa}{3}.\]
Either the conclusion holds with $k=k_1$ and $\nu=\nu_1$ or
\[1-\varepsilon \geq \nu_1\left(\bigcup_{n\in\Z} T^{nv}A\right) \geq \nu_1\left(\bigcup_{n\in\Z} T^{nk_1v}A\right) =  \nu_1\left(\bigcup_{n\in\Z} T_1^{nv}A\right)\]
for all $v\in \Z^d$, where $(T_1^v)_{v\in \Z^d}:= (T^{k_1v})_{v\in \Z^d}$. Hence if we are in the latter case we can repeat the argument to obtain another mass increment for the set $A$ of size $\kappa/3$ with respect to some $T^{k_1k_2}$-ergodic component of $\mu$ and so on. All remaining details are as in the proof of Theorem \ref{Theorem: Quant Poly Exp}.
\end{proof}

\section{\textbf{$T^k$-ergodic components and eigenfunctions}}
\begin{defn}
Let $T:\Z^d \acts (X,\mu)$ act ergodically. For any $\alpha \in \T^d$, a function $f\in L^2(X,\mu)$ is called an $\alpha$-eigenfunction if
\[f\circ T^v = e(\alpha \cdot v) f \quad \text{for all } v\in \Z^d.\]
We denote the set of all $\alpha$-eigenfunctions by $\mathrm{Eig}_T(\alpha)$, and for any $R\subset \T^d$ we define
\[\Eig_T(R):= \overline{\mathrm{Span}\left\{f\in L^2(X,\mu)\,: \, f\in \Eig_T(\alpha)\text{ for some }\alpha \in R\right\}}.\]
\end{defn}
For $f,g\in L^2(X,\mu)$ we set
\[ \langle f,g\rangle := \int_X f \overline{g}\, d\mu.\]
It is not hard to see that $\Eig_T(\alpha)$ and $\Eig_T(\beta)$ are orthogonal whenever $\alpha \neq \beta \in \T^d$ and moreover ergodicity implies that each $\Eig_T(\alpha)$ has dimension at most $1$. Hence $\Eig_T(R)$ admits an orthonormal basis consisting $\alpha$-eigenfunctions, one for each $\alpha \in R$ whose eigenspace $\Eig_T(\alpha)$ is non-trivial.
\begin{lem}\label{Lemma: Spectral mass is L2}
Let $T:\Z^d \acts (X,\mu)$ be an ergodic action and let $A\subset X$ have $\mu(A)>0$ and spectral measure $\sigma$. For any $\alpha \in \T^d$ denote by $\mathcal{P}_{\Eig_T(\alpha)}$ the orthogonal projection onto $\Eig_T(\alpha)$. Then for any $\alpha \in \T^d$ we have that
\[\langle \mathcal{P}_{\Eig_T(\alpha)} 1_A,1_A \rangle = \sigma(\{\alpha\}),\]
and moreover $\mu(A)^2 = \sigma(\{0\})$.
\end{lem}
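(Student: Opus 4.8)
The plan is to realise the projection $\mathcal{P}_{\Eig_T(\alpha)}$ as a strong limit of twisted ergodic averages and then feed in Bochner's formula. Write $U^v f := f\circ T^v$ for the Koopman representation of $\Z^d$ on $L^2(X,\mu)$. Since the action preserves $\mu$ we have $\langle U^v 1_A, 1_A\rangle = \mu(T^{-v}A\cap A) = \mu(A\cap T^v A)$, so the defining property of the spectral measure reads
\[ \langle U^v 1_A, 1_A\rangle = \int_{\T^d} e(v\cdot\beta)\, d\sigma(\beta) \qquad \text{for all } v\in\Z^d. \]
Now fix $\alpha\in\T^d$ and consider the twisted unitary representation $V^v := e(-\alpha\cdot v)U^v$ of $\Z^d$: a vector $f$ is fixed by every $V^v$ exactly when $f\circ T^v = e(\alpha\cdot v)f$ for all $v$, that is, exactly when $f\in\Eig_T(\alpha)$. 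Hence, by the von Neumann mean ergodic theorem applied to $(V^v)$ along the F{\o}lner sequence $(Q_N)$, the averages $\frac{1}{|Q_N|}\sum_{v\in Q_N}V^v$ converge in the strong operator topology to $\mathcal{P}_{\Eig_T(\alpha)}$.

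First I would pair this convergence against $1_A$ and insert the spectral-measure identity above, which yields
\[ \langle \mathcal{P}_{\Eig_T(\alpha)}1_A, 1_A\rangle = \lim_{N\to\infty}\frac{1}{|Q_N|}\sum_{v\in Q_N} e(-\alpha\cdot v)\,\mu(A\cap T^v A) = \lim_{N\to\infty}\int_{\T^d} K_N(\beta-\alpha)\, d\sigma(\beta), \]
where $K_N(\gamma):=\frac{1}{|Q_N|}\sum_{v\in Q_N}e(v\cdot\gamma)=\prod_{i=1}^d\frac{1}{2N+1}\sum_{n=-N}^{N}e(n\gamma_i)$. Then I would use the elementary facts that $|K_N|\le 1$ everywhere (being an average of characters), that $K_N(0)=1$, and that $K_N(\gamma)\to 0$ as $N\to\infty$ for every nonzero $\gamma\in\T^d$ (some coordinate factor tends to $0$ while the others stay bounded by $1$). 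Since $\sigma$ is a finite measure, the constant $1$ serves as an integrable dominating function, so dominated convergence collapses the right-hand side to $\int_{\T^d} 1_{\{\beta=\alpha\}}\, d\sigma(\beta)=\sigma(\{\alpha\})$, proving the first identity.

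For the ``moreover'' I would specialise to $\alpha=0$. By definition $\Eig_T(0)$ is the space of $T$-invariant functions in $L^2(X,\mu)$, which by ergodicity consists exactly of the constants; hence $\mathcal{P}_{\Eig_T(0)}1_A$ is the constant function $\langle 1_A, 1\rangle = \mu(A)$, and therefore $\langle \mathcal{P}_{\Eig_T(0)}1_A, 1_A\rangle = \mu(A)\langle 1, 1_A\rangle = \mu(A)^2$. Combined with the first identity at $\alpha=0$ this gives $\sigma(\{0\}) = \mu(A)^2$.

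I do not expect a genuine obstacle, since the statement is essentially a repackaging of the spectral theorem; the only point requiring care is the bookkeeping of signs and complex conjugates — in particular that it is $V^v=e(-\alpha\cdot v)U^v$ (rather than its conjugate) whose fixed space is $\Eig_T(\alpha)$, and that $\langle U^v 1_A, 1_A\rangle$ equals $\mu(A\cap T^v A)$ and not $\mu(A\cap T^{-v}A)$. An equivalent and arguably cleaner route avoids ergodic averages altogether: apply the joint spectral theorem to the commuting unitaries $(U^v)_{v\in\Z^d}$ to obtain a projection-valued measure $E$ on $\T^d$; by uniqueness in Bochner's theorem $\sigma(\cdot) = \langle E(\cdot)1_A, 1_A\rangle$, and the atom $E(\{\alpha\})$ is precisely the orthogonal projection onto the joint eigenspace $\Eig_T(\alpha)$, from which both claims follow at once.
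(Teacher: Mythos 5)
Your proposal is correct and follows essentially the same route as the paper: the twisted mean ergodic theorem identifies $\mathcal{P}_{\Eig_T(\alpha)}$ as the strong limit of the averages $\frac{1}{|Q_N|}\sum_{v\in Q_N}e(-\alpha\cdot v)T^v$, Bochner's formula converts the pairing with $1_A$ into $\int K_N(\beta-\alpha)\,d\sigma(\beta)$, and dominated convergence (with the bound $|K_N|\le 1$ and finiteness of $\sigma$) collapses this to $\sigma(\{\alpha\})$, while the case $\alpha=0$ uses ergodicity to identify $\Eig_T(0)$ with the constants. Your explicit verification that $K_N(\gamma)\to 0$ for $\gamma\neq 0$ and your sign bookkeeping are slightly more careful than the paper's write-up, but the argument is the same.
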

\begin{proof}
The mean ergodic theorem applied to the unitary action $\left( e(-\alpha \cdot v)T^v\right)_{v\in \Z^d}$ says that any $f\in L^2(X,\mu)$ satisfies
\[ \mathcal{P}_{\Eig_T(\alpha)} f = \lim_{N\to \infty}\frac{1}{|Q_N|}\sum_{v\in \Z^d} e(-v \cdot \alpha)T^v f.\]
So by continuity of the inner product and the dominated convergence theorem we can calculate
\begin{align*}
\langle \mathcal{P}_{\Eig_T(\alpha)} 1_A, 1_A \rangle &= \left\langle \lim_{N\to \infty}\frac{1}{|Q_N|}\sum_{v\in \Z^d} e(-v \cdot \alpha)T^v 1_A, 1_A \right\rangle \\
&= \lim_{N\to \infty}\frac{1}{|Q_N|}\sum_{v\in \Z^d} e(-v \cdot \alpha) \int_{\T^d} e(v\cdot \beta)\, d\sigma(\beta)\\
&=\int_{\T^d} \lim_{N\to \infty}\frac{1}{|Q_N|}\sum_{v\in \Z^d} e(v\cdot (\beta - \alpha)) \, d\sigma(\beta) \\
&= \int_{\T^d} 1_{\{\alpha-\beta = 0\}}\, d\sigma(\beta) = \sigma(\{\alpha\}).
\end{align*}
Notice that $\Eig_T(0)$ is exactly the space of $T$-invariant functions, so by ergodicity $\Eig_T(0)$ is the space of almost everywhere constant functions. Hence
\[ \langle \mathcal{P}_{\Eig_T(0)}1_A,1_A \rangle =\langle \mu(A) 1_X,1_A \rangle  = \mu(A)^2\]
as required.
\end{proof}
\begin{proof}[Proof of Lemma \ref{Lemma: Increment}]
Fix a positive integer $M$. Let 
\[R(M!)=\{\alpha \in \T^d\,:\, M!\alpha =0\in\T^d\}.\]
Pick an orthonormal basis for $\Eig_T(R(M!))$ consisting of one eigenfunction $f_\alpha \in \Eig_T(\alpha)$ for each $\alpha \in R(M!)$ such that $\Eig_T(\alpha)$ is non-trivial. In the case that some $\alpha \in R(M!)$ has $\Eig_T(\alpha) = \{0\}$, it will be convenient for notational purposes to let $f_\alpha = 0$. In any case then $\{f_\alpha\}_{\alpha \in R(M!)}$ is an orthonormal spanning set of $\Eig_T(R(M!))$. Since each $\Eig_T(\alpha)$ is at most $1$-dimensional then
\[\langle \mathcal{P}_{\Eig_T(\alpha)}1_A,1_A\rangle = |\langle 1_A,f_\alpha \rangle|^2 \quad \text{for every }\alpha \in R(M!),\]
and so Lemma \ref{Lemma: Spectral mass is L2} then implies that
\[\sigma(\Rat(M))+\sigma(\{0\}) =\sigma(\Rat(M))+\mu(A)^2  =  \sum_{\alpha \in \Rat(M)\cup\{0\}} |\langle 1_A,f_\alpha\rangle|^2.\]
Of course $\Rat(M)\cup\{0\} \subset R(M!)$ so we also have that
\begin{equation}\label{eq: big l2}
\sigma(\Rat(M))+\mu(A)^2 \leq  \sum_{\alpha \in R(M!)} |\langle 1_A,f_\alpha\rangle|^2.
\end{equation}
It is easy to see that
\[\Eig_T(R(M!)) \subset L^2(X,\mu)^{T^{M!}},\]
where $L^2(X,\mu)^{T^{M!}}$ is the space of all $M!\Z^d$-invariant functions in $L^2(X,\mu)$,
and so we can apply Parseval's formula to see that
\begin{align}
\sum_{\alpha \in R(M!)} |\langle 1_A,f_\alpha\rangle|^2 &= \int | \mathcal{P}_{\Eig_T(R(M!))}1_A|^2\, d\mu \nonumber \\
&\leq \int | \mathcal{P}_{L^2(X,\mu)^{T^{M!}}}1_A|^2\, d\mu.\label{eq: big L2}
\end{align}
By Proposition 
\ref{Prop: Tk ergodic components} there exists a finite number of $T^{M!}$-ergodic components $\nu_1,\ldots,\nu_n$ of $\mu$ for which
\[ \mu = \frac{1}{n}\sum_{i=1}^n \nu_i,\]
so combining equations \eqref{eq: big l2} and \eqref{eq: big L2} we see that
\begin{equation}\label{eq: big projections}
\sigma(\Rat(M))+\mu(A)^2 \leq \frac{1}{n}\sum_{i=1}^n \int | \mathcal{P}_{L^2(X,\mu)^{T^{M!}}}1_A|^2\, d\nu_i.
\end{equation}
Since each $\nu_i$ is $M!\Z^d$ ergodic then any $f\in L^2(X,\mu)^{T^{M!}}$ is constant $\nu_i$-almost everywhere for $i=1,\ldots,n$. It follows that
\[\int| \mathcal{P}_{L^2(X,\mu)^{T^{M!}}}1_A|^2 \, d\nu_i= \nu_i(A)^2 \quad \text{for each } i=1,\ldots,n,\]
and so equation \eqref{eq: big projections} reads
\[\sigma(\Rat(M))+\mu(A)^2 \leq \frac{1}{n}\sum_{i=1}^n \nu_i(A)^2.\]
The pigeonhole principle then yields some $i$ for which
\[ \nu_i(A) \geq \sqrt{\sigma(\Rat(M))+\mu(A)^2}.\]
\end{proof}
\section{\textbf{The polynomial dichotomy}}
During the proof of Proposition \ref{Proposition: Small Rat(M) gives polynomial conclusion} we will need to control polynomial exponential sums of the form
\begin{equation}\label{eq: generic poly exp sum}
\limsup_{N\to \infty} \labs{\frac{1}{N}\sum_{n=0}^{N-1} e\left( P(n)\cdot \alpha\right)}
\end{equation}
where $P: \Z \to \Z^d$ is an integer polynomial with linearly independent component polynomials and $\alpha \in \T^d$. Polynomial Weyl distribution implies that the expression in equation \eqref{eq: generic poly exp sum} is $0$ whenever $\alpha \not \in \Q^d / \Z^d$, indeed this is the content of Lemma \ref{Lemma: Can ignore irrationals}. It was observed in \cite{BuF} that a classical bound of Hua provides sufficient control of the expression in equation \eqref{eq: generic poly exp sum} in the case when $\alpha \in \T^d$ is rational, subject to the constraint that $P$ has bounded \textit{multiplicative complexity}.
\begin{defn}[Multiplicative complexity of polynomials]
An integer polynomial $P:\Z \to \Z^d$ has multiplicative complexity $Q$ if for all $a_1,\ldots,a_d,q\in \Z$ with $\gcd(a_1,\ldots,a_d,q)=1$ the polynomial
\[ \sum_{i=1}^D b_i n^i := (P(n)-P(0))\cdot (a_1,\ldots,a_d)\]
has that $\gcd(b_1,\ldots,b_D,q)\leq Q$.
\end{defn}
\begin{lem}[{\cite{BuF}[Proposition 2.2]}\footnote{The authors of \cite{BuF} provide more quantitative information about the nature of the function $\psi_P$, but the weaker formulation presented here suffices for our purposes.}]\label{Lemma: Hua}
Let $P:\Z \to \Z^d$ be an integer polynomial with bounded multiplicative complexity. Then there exists a decreasing function $\psi_P: \N \to [0,1]$ with $\lim_{q\to \infty}\psi_P(q) = 0$ such that every $\alpha \in \Q^d/ \Z^d$ with $\denom(\alpha)=q$ satisfies that
\[ \limsup_{N\to \infty} \labs{\frac{1}{N}\sum_{n=0}^{N-1} e\left( P(n)\cdot \alpha\right)} \leq \psi_P(q).\]
\end{lem}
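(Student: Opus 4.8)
The plan is to recognize that, for a rational $\alpha$ of denominator $q$, the phase $P(n)\cdot\alpha$ is (mod $1$) a polynomial in $n$ with rational coefficients whose common denominator is essentially $q$, to pass from the incomplete exponential sum to a complete exponential sum modulo that denominator, and then to invoke the classical bound of Hua for complete polynomial exponential sums. The \emph{bounded multiplicative complexity} hypothesis is exactly what is needed to guarantee that this denominator does not collapse to something much smaller than $q$.

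First I would normalize. Since $|e(P(0)\cdot\alpha)|=1$, factoring this scalar out of the average shows we may assume $P(0)=0$. Now fix $\alpha$ with $\denom(\alpha)=q$ and set $c:=q\alpha\in\Z^d$. Working through the definition of $\denom$ (writing each coordinate in lowest terms and using $\gcd(q/q_1,\dots,q/q_d)=q/\lcm(q_1,\dots,q_d)=1$) one checks $\gcd(c_1,\dots,c_d,q)=1$. Then $P(n)\cdot\alpha\equiv\frac1q\,P(n)\cdot c\pmod 1$, where $P(n)\cdot c=\sum_{i=1}^{D}b_i n^i$ is an integer polynomial with zero constant term and $1\le D\le\deg P$. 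By bounded multiplicative complexity $Q$ of $P$, applied to this very $c$, we get $g:=\gcd(b_1,\dots,b_D,q)\le Q$. Dividing out $g$ rewrites the phase as $\frac1m\sum_i b_i' n^i$ with $m:=q/g\ge q/Q$ and $\gcd(b_1',\dots,b_D',m)=1$.

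Next, the map $n\mapsto e\big(\frac1m\sum_i b_i' n^i\big)$ is periodic in $n$ with period dividing $m$ (a shift $n\mapsto n+m$ changes $\sum_i b_i' n^i$ by a multiple of $m$). Splitting $\{0,\dots,N-1\}$ into $\lfloor N/m\rfloor$ full periods plus a remainder of fewer than $m$ terms and letting $N\to\infty$ yields $\limsup_{N\to\infty}\big|\frac1N\sum_{n=0}^{N-1}e(P(n)\cdot\alpha)\big|\le\frac1m\big|\sum_{n\bmod m}e\big(\frac1m\sum_{i=1}^{D}b_i' n^i\big)\big|$. At this point Hua's inequality bounds the complete sum on the right by $C_{D,\varepsilon}\,m^{1-1/D+\varepsilon}$ for every $\varepsilon>0$ (this is where $\gcd(b_1',\dots,b_D',m)=1$ is used), so the left-hand side is $\le C_{D,\varepsilon}\,m^{-1/D+\varepsilon}\le C_{D,\varepsilon}(q/Q)^{-1/D+\varepsilon}$. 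Fixing $\varepsilon=\tfrac1{2D}$ and putting $\psi_P(q):=\min\{1,\;C\,(q/Q)^{-1/(2\deg P)}\}$ for the resulting constant $C$ gives a decreasing function $\N\to[0,1]$ with $\psi_P(q)\to0$ that dominates the $\limsup$ uniformly over all $\alpha$ with $\denom(\alpha)=q$, which is exactly the assertion.

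The single non-elementary ingredient, and the step I expect to be the real obstacle rather than routine bookkeeping, is Hua's estimate $\sum_{n\bmod m}e(f(n)/m)\ll_{\deg f,\varepsilon}m^{1-1/\deg f+\varepsilon}$ under the coprimality condition on the nonconstant coefficients of $f$; I would simply quote it (it is the classical bound referenced in \cite{BuF}). Everything else — the normalization $P(0)=0$, the denominator bookkeeping, the incomplete-to-complete reduction, and the final step making $\psi_P$ monotone with limit $0$ — is elementary. One point to flag is that the argument tacitly needs $P(n)\cdot c$ to be a genuine polynomial of positive degree $D$ (equivalently, the component polynomials of $P-P(0)$ must be linearly independent), since otherwise the displayed $\limsup$ can equal $1$ and no such $\psi_P$ exists; this degenerate case is excluded by the hypotheses under which the lemma is used.
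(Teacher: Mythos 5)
The paper itself offers no proof of this lemma --- it is imported verbatim from \cite{BuF} (Proposition 2.2) --- so there is nothing internal to compare against; what you have written is the standard proof of that cited result, and it is correct. The chain normalize to $P(0)=0$, write $\alpha=c/q$ with $\gcd(c_1,\dots,c_d,q)=1$, invoke bounded multiplicative complexity to keep the reduced denominator $m=q/g\ge q/Q$ after clearing the common factor $g\le Q$ from the coefficients of $P(n)\cdot c$, pass from the incomplete average to the complete sum modulo $m$ by periodicity, and apply Hua's bound is exactly the route the quantitative statement in \cite{BuF} is built on, and your final monotonization $\psi_P(q):=\min\{1,\,C(q/Q)^{-1/(2\deg P)}\}$ does give a decreasing function tending to $0$ that dominates the $\limsup$ uniformly over $\alpha$ with $\denom(\alpha)=q$. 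One small correction to your closing caveat: the degenerate case in which $P(n)\cdot c$ collapses to a constant is not merely excluded ``by the hypotheses under which the lemma is used'' --- it is excluded by the hypothesis of the lemma itself. If $(P(n)-P(0))\cdot a$ vanished identically for some $a$ with $\gcd(a_1,\dots,a_d)=1$, then all $b_i$ would be zero and $\gcd(b_1,\dots,b_D,q)=q$ for every $q$, contradicting bounded multiplicative complexity; hence for $q>Q$ the phase polynomial always has positive degree (and for $q\le Q$ the trivial bound $\psi_P(q)=1$ suffices). So your argument is complete as stated.
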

\begin{lem}\label{Lemma: Independence implies bounded complexity}
If $P=(P_1,\ldots,P_d): \Z \to \Z^d$ is an integer polynomial with linearly independent component polynomials, then $P$ has bounded multiplicative complexity.
\end{lem}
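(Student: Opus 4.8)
The plan is to convert the linear independence of $P_1,\dots,P_d$ into the invertibility over $\Q$ of a fixed integer coefficient matrix attached to $P$, and then apply Cramer's rule (via the adjugate) to get a uniform bound on the relevant gcd.

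Concretely, I would write $P_j(n)-P_j(0)=\sum_{i=1}^{D}c_{ji}n^i$ with $c_{ji}\in\Z$ and $D=\deg P$, and form the $d\times D$ integer matrix $C=(c_{ji})_{1\le j\le d,\,1\le i\le D}$. Using that $P_1-P_1(0),\dots,P_d-P_d(0)$ are linearly independent (see the remark at the end), the rows of $C$ are linearly independent over $\Q$, so $d\le D$ and $\operatorname{rank}C=d$; hence I may fix columns $i_1<\cdots<i_d$ for which the $d\times d$ minor $C_I$ has $\Delta:=\det C_I\neq 0$. The claim will be that $P$ has multiplicative complexity at most $|\Delta|$, a quantity depending only on $P$.

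To prove the claim, fix $a_1,\dots,a_d,q\in\Z$ with $\gcd(a_1,\dots,a_d,q)=1$ and write $(P(n)-P(0))\cdot(a_1,\dots,a_d)=\sum_{i=1}^{D}b_in^i$, so $b_i=\sum_{j=1}^{d}c_{ji}a_j$. Viewing $a=(a_1,\dots,a_d)$ and $b_I=(b_{i_1},\dots,b_{i_d})$ as row vectors this says $b_I=a\,C_I$. Let $g=\gcd(b_1,\dots,b_D,q)$. Multiplying $b_I=a\,C_I$ on the right by the integer adjugate $\operatorname{adj}(C_I)$ and using $C_I\operatorname{adj}(C_I)=\Delta I_d$ gives $b_I\operatorname{adj}(C_I)=\Delta\,a$; since $g$ divides every $b_{i_l}$ and $\operatorname{adj}(C_I)$ has integer entries, $g\mid\Delta a_j$ for each $j$, hence $g\mid\Delta\cdot\gcd(a_1,\dots,a_d)$. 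On the other hand $g\mid q$, and $\gcd\big(\gcd(a_1,\dots,a_d),q\big)=\gcd(a_1,\dots,a_d,q)=1$, so $g$ is coprime to $\gcd(a_1,\dots,a_d)$ and therefore $g\mid\Delta$, i.e.\ $g\le|\Delta|$. As $a$ and $q$ were arbitrary, this gives the claim and hence the lemma, with the bound $Q=|\Delta|$ (or, if one wants the optimal constant, the $d$-th determinantal divisor of $C$).

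The only step needing care is the passage from linear independence of the components to $\operatorname{rank}C=d$: subtracting constant terms can in principle destroy linear independence, so what is actually used is that $P_1-P_1(0),\dots,P_d-P_d(0)$ are linearly independent. This is automatic when $P(0)=0$, as in every application of this lemma in the paper; without it the statement genuinely fails (e.g.\ for $P=(P_1,P_2)$ with $P_1$ a nonzero constant, the term $P_1-P_1(0)=0$ contributes nothing and $q$ can be taken arbitrarily large while keeping $\gcd(a_1,a_2,q)=1$). Granting this, everything else is elementary linear algebra over $\Z$ together with the coprimality bookkeeping in the last step.
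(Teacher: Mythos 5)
Your proof is correct. It follows the same high-level strategy as the paper---linear independence of the components forces the coefficient matrix to have rank $d$, and one then inverts the map $a\mapsto b$ over $\Z$ at the cost of a determinant depending only on $P$---but the mechanism is different. The paper puts the $D\times d$ coefficient matrix $B$ into Smith normal form $B=LDR$ and tracks $\gcd(Ba,q)$ through the unimodular factors, bounding it by an elementary divisor of $B$; you instead select a nonsingular $d\times d$ minor $C_I$, multiply $b_I=aC_I$ by the integer adjugate to get $g\mid \Delta\,\gcd(a_1,\dots,a_d)$, and then use $\gcd(\gcd(a_1,\dots,a_d),q)=1$ to conclude $g\mid\Delta$. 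The two routes give comparable constants (your $|\Delta|$ is a multiple of the relevant elementary divisor), and your adjugate computation makes the final coprimality step arguably more transparent than the paper's chain of gcd inequalities through $L$, $D$ and $R$. Your closing caveat is also well taken: since multiplicative complexity is defined via $P(n)-P(0)$, what is genuinely needed is linear independence of the $P_i-P_i(0)$; the paper dispatches this with ``it suffices to assume $P(0)=0$,'' which is harmless in every application (the lemma is only invoked for polynomials with zero constant term), but, as your constant-component example shows, the statement read literally would fail without that reduction.
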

\begin{proof}
It suffices to assume that $P(0)= 0$.
We must show there exists a constant $Q = Q(P)$ such that for any $a_1\ldots a_d, q\in \Z$ with $\gcd(a_1,\ldots,a_d,q)=1$ the polynomial
\[\sum_{j=1}^D b_j n^j := P(n)\cdot (a_1,\ldots,a_d),\]
where $D$ is the degree of $P$, satisfies that $\gcd(b_1,\dots,b_D,q)\leq Q$. Indeed let each
\[P_i(n) = c^i_1 n + \ldots +c^i_Dn^D\]
and let $B$ be the $D\times D$ matrix whose $i^\text{th}$ column is the coefficient vector $(c^i_1,\ldots,c^i_D)$ of $P_i$.
The coefficients $b =(b_1,\ldots, b_D)^\top$ are given by $B a$ where $a = (a_1,\ldots,a_d)^\top$. We can place $B$ into Smith normal form to obtain a decomposition $B=LDR$ for some $L \in \mathrm{SL}_D(\Z)$, $R \in \mathrm{SL}_d(\Z)$, and some diagonal matrix $D\in \mathrm{Mat}_{D\times d}(\Z)$ of the form $D= (D_1,\ldots,D_m,0,\ldots,0)$ for non-zero integers $D_1, D_2,\ldots D_m$ satisfying that $D_i$ divides $D_{i+1}$ for each $i=1,\ldots,m-1$. Since the components of $P$ are linearly independent then $\mathrm{rank}(B) = d$. It follows that
\[ d = \mathrm{rank}(B) \leq \min\{\mathrm{rank}(L),\mathrm{rank}(D),\mathrm{rank}(R)\},\]
and so $d \leq m$. It is easy to see that $\gcd(Ax) = \gcd(x)$ for all $x\in \Z^D$ and all $A\in \mathrm{SL}_D(\Z)$, hence
\begin{align*}
\gcd(Ba,q) &= \gcd(LDRa,q) \\
&= \gcd(DRa,q) \leq D_1 \gcd(Ra,q) = D_1\gcd(a,q) = D_1.
\end{align*}
\end{proof}

\begin{lem}[\cite{BuF}{[Lemma 4.3]}]\label{Lemma: Can ignore irrationals}
Let $P=(P_1,\ldots,P_d): \Z \to \Z^d$ be an integer polynomial with zero constant term such that $P_1,\ldots,P_d$ are linearly independent and let $T: \Z^d\acts (X,\mu)$ be a measure preserving action. Suppose that $f\in \Eig_{T}(\Q^d/\Z^d)^\perp$ i.e. $\langle f,f_{\alpha}\rangle = 0$ for all rational $\alpha \in \T^d$. Then
\[ \lim_{N\to \infty} \lnorm{\frac{1}{N}\sum_{n=0}^{N-1} T^{P(n)}f }_{L^2(X,\mu)}= 0.\]
\end{lem}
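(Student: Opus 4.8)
The plan is to move to the frequency side by attaching to $f$ its spectral measure and then to invoke Weyl's equidistribution theorem for polynomial sequences, in direct analogy with the spectral computations of Section \ref{Section: Measure Increment}. The one genuinely new point is that the linear independence of $P_1,\dots,P_d$ is what lets us upgrade the statement ``all the coefficients of $P(n)\cdot\alpha$ are rational'' to ``$\alpha$ is rational''.

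First I would introduce the spectral measure of $f$: since $v\mapsto\langle T^{v}f,f\rangle$ is positive definite on $\Z^{d}$, Bochner's theorem produces a unique finite positive Borel measure $\sigma_{f}$ on $\T^{d}$ with $\langle T^{v}f,f\rangle=\int_{\T^{d}}e(v\cdot\alpha)\,d\sigma_{f}(\alpha)$ for all $v\in\Z^{d}$. Expanding the square and using $\langle T^{P(n)}f,T^{P(m)}f\rangle=\langle T^{P(n)-P(m)}f,f\rangle$ yields the identity
\[
\lnorm{\frac1N\sum_{n=0}^{N-1}T^{P(n)}f}_{L^{2}(X,\mu)}^{2}=\int_{\T^{d}}\labs{\frac1N\sum_{n=0}^{N-1}e\pan{P(n)\cdot\alpha}}^{2}d\sigma_{f}(\alpha).
\]
Since the integrand is bounded by $1$ and $\sigma_{f}$ is finite, dominated convergence reduces the lemma to showing that $\frac1N\sum_{n=0}^{N-1}e(P(n)\cdot\alpha)\to0$ for $\sigma_{f}$-almost every $\alpha\in\T^{d}$.

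To locate the bad set, fix $\alpha$ and write $P_{i}(n)=\sum_{j=1}^{D}c_{i}^{j}n^{j}$ with $D=\deg P$, so that $P(n)\cdot\alpha=\sum_{j=1}^{D}c_{j}(\alpha)n^{j}$ with $c_{j}(\alpha)=\sum_{i=1}^{d}c_{i}^{j}\alpha_{i}$. By Weyl's equidistribution theorem, if some $c_{j}(\alpha)$ is irrational then $\pan{P(n)\cdot\alpha\bmod1}_{n}$ is equidistributed in $\T$ and the average tends to $0$; hence the average fails to vanish only when every $c_{j}(\alpha)$, $1\le j\le D$, is rational. Letting $B$ be the $D\times d$ integer matrix whose $i$-th column is $(c_{i}^{1},\dots,c_{i}^{D})^{\top}$, linear independence of $P_{1},\dots,P_{d}$ forces $\mathrm{rank}(B)=d$, so $B$ contains an invertible $d\times d$ rational submatrix; arguing exactly as in the Smith-normal-form computation in the proof of Lemma \ref{Lemma: Independence implies bounded complexity}, this shows that for any lift of $\alpha$ to $\R^{d}$ the condition $B\alpha\in\Q^{D}$ forces $\alpha\in\Q^{d}$, i.e.\ $\alpha\in\Q^{d}/\Z^{d}$. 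Thus $\{\alpha\in\T^{d}:\frac1N\sum_{n=0}^{N-1}e(P(n)\cdot\alpha)\not\to0\}\subseteq\Q^{d}/\Z^{d}$.

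Finally I would use the hypothesis to annihilate this set. The assumption $f\in\Eig_{T}(\Q^{d}/\Z^{d})^{\perp}$ means $f\perp\Eig_{T}(\alpha)$ for every rational $\alpha$, and the mean ergodic theorem computation in the proof of Lemma \ref{Lemma: Spectral mass is L2}, applied to $f$ in place of $1_{A}$ (that part of the proof does not use ergodicity), gives $\sigma_{f}(\{\alpha\})=\lnorm{\mathcal{P}_{\Eig_{T}(\alpha)}f}^{2}=0$ for each rational $\alpha$; since $\Q^{d}/\Z^{d}$ is countable this forces $\sigma_{f}(\Q^{d}/\Z^{d})=0$, and combining with the previous paragraph and dominated convergence completes the proof. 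The only step that needs real care is the implication ``all $c_{j}(\alpha)$ rational $\Rightarrow\alpha$ rational'': this is exactly the place where linear independence of the components of $P$ is indispensable, since without it one can have $P(n)\cdot\alpha\in\Z$ for all $n$ at a suitable irrational $\alpha$ (for instance $P(n)=(n,2n)$), and then $\sigma_{f}$ could legitimately put mass at such an $\alpha$.
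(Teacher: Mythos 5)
Your proof is correct; note that the paper does not reprove this lemma but cites it to \cite{BuF}[Lemma 4.3], and your argument (Bochner/spectral measure of $f$, the identity $\lVert\frac1N\sum_n T^{P(n)}f\rVert^2=\int_{\T^d}\lvert\frac1N\sum_n e(P(n)\cdot\alpha)\rvert^2\,d\sigma_f$, Weyl equidistribution off $\Q^d/\Z^d$, and $\sigma_f(\{\alpha\})=\lVert\mathcal{P}_{\Eig_T(\alpha)}f\rVert^2=0$ for rational $\alpha$) is exactly the standard route taken there and is consistent with the spectral computations of the surrounding sections. You also correctly isolate the one place linear independence is used: $\mathrm{rank}(B)=d$ forces all Weyl coefficients rational $\Rightarrow\alpha$ rational, which fails for dependent components such as $P(n)=(n,2n)$.
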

\begin{proof}[Proof of Proposition \ref{Proposition: Small Rat(M) gives polynomial conclusion}]
Fix $\delta,\varepsilon>0$ and let $P:\Z^r \to \Z^d$ be an integer polynomial in the $r$ variables $x_1,\ldots,x_r$. Suppose further that $P$ has zero constant term and that the component polynomials $P_1,\ldots,P_d$ are linearly independent. 
Let $\mathrm{deg}(P) = D$. We claim that the map sending $x_j \mapsto n^{{(D+1)}^j}$ is injective on the monomials appearing in the components of $P$. Indeed if $x_1^{i_1}\ldots x_r^{i_r}$ is a monomial appearing in some component of $P$ then we can calculate
\[ \prod_{j=1}^r x_j^{i_j} \mapsto \prod_{j=1}^r \left(n^{{(D+1)}^j}\right)^{i_j} = n^{\sum_{j=1}^r i_j (D+1)^j},
\]
and since each $i_j < (D+1)$ the claim then follows by the uniqueness of representations of integers in base $D+1$.
For each $i=1,\ldots, d$ we define a polynomial $Q_i\in \Z[n]$ via the formula
\[ Q_i(n): = P_i\left(n^{{(D+1)}},n^{{(D+1)}^2},\ldots,n^{{(D+1)}^r}\right)\]
and set
\[Q=(Q_1,\ldots,Q_d):\Z \to \Z^d.\]
Our claim ensures that each $Q_i$ has the same coefficients as $P_i$, and so our assumption that the components polynomials of $P$ are linearly independent implies that the component polynomials of $Q$ are also linearly independent. Lemma \ref{Lemma: Independence implies bounded complexity} then implies that $Q$ has bounded multiplicative complexity. We can then invoke Lemma \ref{Lemma: Hua} to obtain $\psi_{Q}$ as in the statement of the lemma, and let $M=M(\delta,\varepsilon,Q)$ be the smallest positive integer such that
\begin{equation}\label{eq: choice of M}
\psi_Q(q)< \frac{\delta\varepsilon}{2} \quad \text{for all } q>M.
\end{equation}
Set $\kappa:= \left(\frac{\delta\varepsilon}{2}\right)^2$. Let $T:\Z^d \acts (X,\mu)$ act ergodically and let $A\subset X$ have $\mu(A)\geq \delta$, spectral measure $\sigma$ and
\begin{equation}\label{eq: suppose ratM small}
\sigma(\Rat(M)) < \kappa.
\end{equation}
Suppose in order to derive a contradiction that the desired conclusion does not hold. Then there exists some $B\subset X$ with $\mu(B) \geq  \varepsilon$ satisfying
\begin{equation*}
0=\mu\left( \bigcup_{x\in \Z^r} T^{P(x)}A \cap B\right),
\end{equation*}
which in particular implies that
\begin{equation*}
0=\mu\left( \bigcup_{n\in \Z} T^{Q(n)}A \cap B\right).
\end{equation*}
It follows that $\langle T^{Q(n)} 1_A,1_B\rangle = 0$ for every $n\in \Z$ and so 
\[0=\lim_{N\to \infty} \Big\langle \frac{1}{N} \sum_{n=0}^{N-1}T^{Q(n)} 1_A,1_B\Big \rangle.\]
By Lemma \ref{Lemma: Can ignore irrationals} then
\begin{equation}\label{eq: inner prod is 0}
0 =  \lim_{N\to \infty} \Big\langle \frac{1}{N}\sum_{n=0}^{N-1} T^{Q(n)}\mathcal{P}_{\Eig_T(\Q^d / \Z^d)}1_A ,1_B \Big\rangle.
\end{equation}
By expanding $\mathcal{P}_{\Eig_T(\Q^d / \Z^d)}1_A$ into an orthonormal basis of rational eigenfunctions we can write
\[\mathcal{P}_{\Eig_T(\Q^d / \Z^d)}1_A = \sum_{q=1}^\infty \sum_{\denom(\alpha) = q} \langle 1_A,f_\alpha \rangle f_\alpha\]
where the second sum is over all rational $\alpha \in \T^d$ with $\denom(\alpha)=q$. Applying these observations to equation \eqref{eq: inner prod is 0} allows us to calculate
\begin{align*}
0 &= \lim_{N\to \infty}\Big\langle \frac{1}{N}\sum_{n=0}^{N-1} T^{Q(n)}\mathcal{P}_{\Eig_T(\Q^d / \Z^d)}1_A ,1_B \Big\rangle\\
&=\lim_{N\to \infty}\Big\langle \frac{1}{N}\sum_{n=0}^{N-1} \sum_{q=1}^\infty \sum_{\denom(\alpha) = q} \langle 1_A,f_\alpha \rangle T^{Q(n)}f_\alpha ,1_B \Big\rangle\\
&=\lim_{N\to \infty}\sum_{q=1}^\infty \sum_{\denom(\alpha) = q} \left(\frac{1}{N}\sum_{n=0}^{N-1}  e(Q(n)\cdot \alpha)\right) \langle  f_\alpha ,1_B \rangle \langle 1_A,f_\alpha \rangle.
\end{align*}
For each $N\in \N$ denote
\[ T_N(q) :=\sum_{\denom(\alpha) = q}\left(\frac{1}{N}\sum_{n=0}^{N-1}  e(Q(n)\cdot \alpha)\right) \langle  f_\alpha ,1_B \rangle \langle 1_A,f_\alpha \rangle.\]
The only rational in $\T^d$ with denominator $1$ is $0$ so
\[T_N(1)=\mu(A)\mu(B)>\delta \varepsilon \quad \text{for every } N \in \N.\]
We can then re-write the last line of our calculation as
\begin{equation} \label{eq: splitting into three}
0 = \mu(A)\mu(B) + \lim_{N\to \infty} \left[\sum_{1<q\leq M} T_N(q) + \sum_{q>M}T_N(q)\right].
\end{equation}

We will show that the later two terms in equation \eqref{eq: splitting into three} are small enough to ensure that equation \eqref{eq: splitting into three} is in fact a contradiction, which will finish the proof. More precisely we claim that
\[ \limsup_{N\to \infty}\labs{\sum_{1<q\leq M} T_N(q)} < \frac{\delta\varepsilon}{2} \quad \text{and} \quad \limsup_{N\to \infty}\labs{\sum_{q> M} T_N(q)} < \frac{\delta\varepsilon}{2}.\]
Let us deal with the small denominators first. Recall from our proof of Lemma \ref{Lemma: Increment} that
\begin{equation}\label{eq: formula for spectral mass}
|\langle 1_A,f_\alpha \rangle|^2 = \sigma(\{\alpha\}).
\end{equation}
Now using the triangle inequality, the trivial bound on the exponential sum, Cauchy Schwarz, the Bessel inequality, equation \eqref{eq: formula for spectral mass} and equation \eqref{eq: suppose ratM small} we can estimate
\begin{align}
\limsup_{N\to \infty}\labs{\sum_{1<q\leq M} T_N(q)} &\leq  \sum_{\alpha \in \Rat(M)}  |\langle  f_\alpha ,1_B \rangle| |\langle 1_A,f_\alpha \rangle| \nonumber \\
& \leq \left(\sum_{\alpha \in \Rat(M)}\labs{\langle 1_A,f_\alpha\rangle}^2 \sum_{\alpha \in \Rat(M)} \labs{\langle f_\alpha,1_B\rangle}^2\right)^{1/2}\nonumber \\
&\leq \sqrt{\sigma(\Rat(M))}\nonumber \\
&<\sqrt{\kappa}=\frac{\delta\varepsilon}{2}. \label{eq: bound on small denoms}
\end{align}
where in the final equality we have used our choice of $\kappa$. For the large denominators we again use the triangle inequality, Cauchy Schwarz and the Bessel inequality, however instead of using the trivial bound for the exponential sum we use equation \eqref{eq: choice of M}. Indeed letting $C:= \Q^d / \Z^d \setminus \Rat(M)$ we have that
\begin{align*}
\limsup_{N\to \infty}\labs{\sum_{q>M} T_N(q)} &\leq \sup_{ \alpha \in C} \limsup_{N\to \infty}\labs{\frac{1}{N}\sum_{n=0}^{N-1}  e(Q(n)\cdot \alpha)} \\
& \qquad \times \left(\sum_{\alpha \in C} \labs{\langle 1_A,f_\alpha\rangle}^2 \sum_{\alpha \in C} \labs{\langle f_\alpha,1_B\rangle}^2\right)^{1/2}\\
&\leq \psi_Q(M) < \frac{\delta \varepsilon}{2}.
\end{align*}
\end{proof}

\section{\textbf{The directional dichotomy}}
Given any $v\in \Z^d$ let $L_v^\perp \subset \T^d$ denote the annihilator of $v$ inside $\T^d$, i.e.
\[L_v^\perp:=\{\alpha \in \T^d \, : \, v\cdot \alpha =0 \in \T^d\}.\]
The following lemma is an important observation from \cite{BjF} that reduces the study of expansive directions for ergodic $\Z^d$-systems to the study of annihilators inside $\T^d$.
\begin{lem}[\cite{BjF}{[Lemma 3.2]}]\label{Lemma: Directional orbit bounded from below by spectral mass of line}
Let $T:\Z^d \acts (X,\mu)$ act ergodically and suppose $A\subset X$ has $\mu(A)>0$ and spectral measure $\sigma$. For any $v\in \Z^d$ we have that
\[\mu \left( \bigcup_{n\in\Z} T^{nv} A\right) \geq \frac{\mu(A)^2}{\sigma(L_v^\perp)}.\]
\end{lem}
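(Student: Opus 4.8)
The plan is to exploit the spectral-theoretic bridge established in Lemma \ref{Lemma: Spectral mass is L2}, which identifies $\sigma(\{\alpha\})$ with $\langle \mathcal{P}_{\Eig_T(\alpha)}1_A,1_A\rangle$, together with the fact that the union $\bigcup_{n\in\Z}T^{nv}A$ is invariant under the sub-action of the cyclic group $\langle v\rangle$. Write $B:=\bigcup_{n\in\Z}T^{nv}A$, so that $T^{v}B=B$ and $A\subset B$. The key point is that an $\langle v\rangle$-invariant function can only see eigenvalues $\alpha$ lying in the annihilator $L_v^\perp$: if $f\in\Eig_T(\alpha)$ with $\alpha\notin L_v^\perp$, then $f\circ T^v=e(\alpha\cdot v)f$ with $e(\alpha\cdot v)\neq 1$, forcing $\langle 1_B,f\rangle=0$ since $1_B\circ T^v=1_B$. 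Hence the orthogonal projection of $1_B$ onto the eigenfunction space $\Eig_T(\Q^d/\Z^d)$ — or more precisely onto $\overline{\Eig_T(\T^d)}$, the Kronecker factor — already lies in $\Eig_T(L_v^\perp)$.

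The main steps, in order, are as follows. First I would record the inclusion $A\subset B$ and the invariance $T^vB=B$, and observe $\mu(A)=\langle 1_A,1_B\rangle$ since $1_A=1_A1_B$. Next, using the mean ergodic theorem for the cyclic action $(T^{nv})_{n\in\Z}$ as in the proof of Lemma \ref{Lemma: Spectral mass is L2}, decompose $1_A$ spectrally: by Bessel/Parseval applied to the orthonormal system $\{f_\alpha\}$ of eigenfunctions, and using that $\langle 1_B,f_\alpha\rangle=0$ whenever $\alpha\notin L_v^\perp$, write
\[
\mu(A)=\langle 1_A,1_B\rangle=\sum_{\alpha\in L_v^\perp}\langle 1_A,f_\alpha\rangle\langle f_\alpha,1_B\rangle,
\]
where the sum runs over those $\alpha\in L_v^\perp$ with non-trivial eigenspace. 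Then apply Cauchy–Schwarz to this sum, bounding it by
\[
\Big(\sum_{\alpha\in L_v^\perp}|\langle 1_A,f_\alpha\rangle|^2\Big)^{1/2}\Big(\sum_{\alpha\in L_v^\perp}|\langle f_\alpha,1_B\rangle|^2\Big)^{1/2}\le \sigma(L_v^\perp)^{1/2}\,\mu(B)^{1/2},
\]
using Lemma \ref{Lemma: Spectral mass is L2} (which gives $|\langle 1_A,f_\alpha\rangle|^2=\sigma(\{\alpha\})$, so the first factor is $\sqrt{\sigma(L_v^\perp)}$, noting $L_v^\perp\ni 0$ and $\sigma(\{0\})=\mu(A)^2$ is accounted for correctly) and Bessel's inequality $\sum_\alpha|\langle f_\alpha,1_B\rangle|^2\le\|1_B\|_2^2=\mu(B)$ for the second factor. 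Squaring and rearranging gives $\mu(A)^2\le\sigma(L_v^\perp)\mu(B)$, i.e. $\mu(B)\ge\mu(A)^2/\sigma(L_v^\perp)$, which is the claim.

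One genuinely needs to be a little careful about the convergence and the measure-theoretic meaning of $\Eig_T(L_v^\perp)$ since $L_v^\perp$ is a positive-dimensional subtorus rather than a discrete set; the resolution is that $\Eig_T(\alpha)$ is non-trivial for only countably many $\alpha$ (each contributing a positive atom to the finite measure $\sigma$), so all sums above are countable and absolutely convergent, and $1_B$'s projection onto the eigenfunction space decomposes as an orthogonal sum over that countable set intersected with $L_v^\perp$. I expect the main obstacle to be purely bookkeeping: justifying that $\langle 1_B,f_\alpha\rangle=0$ for $\alpha\notin L_v^\perp$ via the $T^v$-invariance of $1_B$, and confirming that the Cauchy–Schwarz step correctly pairs the spectral mass of $A$ on $L_v^\perp$ against the total mass of $B$ — there is no deep difficulty here, only the need to invoke the mean ergodic theorem and Bessel's inequality in the right spaces, exactly parallel to the computation already carried out in the proof of Lemma \ref{Lemma: Spectral mass is L2}.
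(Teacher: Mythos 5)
Your overall strategy (pair $1_A$ against $1_B:=1_{\bigcup_n T^{nv}A}$ via Cauchy--Schwarz and identify the relevant spectral mass) is the right one, but the central identity
\[ \mu(A)=\langle 1_A,1_B\rangle=\sum_{\alpha\in L_v^\perp}\langle 1_A,f_\alpha\rangle\langle f_\alpha,1_B\rangle \]
is false in general, and the error goes in the wrong direction for your argument. You correctly observe that $T^v$-invariance of $1_B$ forces $\langle 1_B,f_\alpha\rangle=0$ for $\alpha\notin L_v^\perp$; but $T^v$-invariance does not place $1_B$ inside the closed span of the eigenfunctions of the $\Z^d$-action. The space of $T^v$-invariant functions is in general strictly larger than $\Eig_T(L_v^\perp)$, so $\langle 1_A,1_B\rangle$ also contains the pairing of the components of $1_A$ and $1_B$ orthogonal to the Kronecker factor, which need not vanish. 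Concretely: let $(Y,\nu,S)$ be weakly mixing, let $T:\Z^2\acts (Y\times Y,\nu\otimes\nu)$ be given by $T^{(m,n)}=S^m\times S^n$, let $v=(1,0)$ and $A=A_1\times A_2$ with $0<\nu(A_i)<1$. This action is ergodic with no nonconstant eigenfunctions, so your right-hand side equals $\mu(A)\mu(B)=\nu(A_1)\nu(A_2)^2$, which is strictly less than $\mu(A)=\nu(A_1)\nu(A_2)$; moreover, following your chain to its end would yield $\mu(B)\ge 1$, which is false since $B=Y\times A_2$ up to null sets. Equivalently, your first Cauchy--Schwarz factor only counts the atoms of $\sigma$ at genuine eigenvalues inside $L_v^\perp$, whereas the lemma needs all of $\sigma(L_v^\perp)$, including its continuous part (which is $\nu(A_1)^2\nu(A_2)(1-\nu(A_2))>0$ in the example).

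The repair is to project onto the space $I_v$ of all $T^v$-invariant functions in $L^2(X,\mu)$ rather than onto $\Eig_T(L_v^\perp)$. Since $1_B\in I_v$ and $1_A=1_A1_B$, one has $\mu(A)=\langle 1_A,1_B\rangle=\langle \mathcal{P}_{I_v}1_A,1_B\rangle\le\lnorm{\mathcal{P}_{I_v}1_A}_2\,\mu(B)^{1/2}$, and the mean ergodic theorem along $(T^{nv})_{n\in\Z}$ together with Bochner's formula gives
\[ \lnorm{\mathcal{P}_{I_v}1_A}_2^2=\lim_{N\to\infty}\frac{1}{N}\sum_{n=0}^{N-1}\mu(A\cap T^{nv}A)=\int_{\T^d}\lim_{N\to\infty}\frac{1}{N}\sum_{n=0}^{N-1}e(nv\cdot\alpha)\,d\sigma(\alpha)=\sigma(L_v^\perp), \]
exactly parallel to the computation you cite from Lemma \ref{Lemma: Spectral mass is L2}. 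Squaring then yields the claim. (The paper itself imports this lemma from \cite{BjF} without proof, so there is no in-paper argument to compare against; the above is the standard one.)
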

\begin{defn}[Haystacks]
Let $\mathcal{P}$ denote the set of all primitive vectors in $\Z^d$, i.e. those for which the $\gcd$ of it's components is $1$. An infinite set $H\subset \mathcal{P}$ is called a haystack if any distinct $v_1,\ldots,v_d \in H$ are linearly independent.
\end{defn}
There are many different ways to construct haystacks in $\Z^d$, see for instance Lemma 2.4 in \cite{BjF}. Let us fix some haystack $H$ for the remainder of the section. For each positive integer $M$ define
\begin{equation}\label{eq: Def of bounded part of haystack}
H_M: = \left\{v\in H\, : \, \norm{v}_\infty \leq \left(\frac{M}{d!}\right)^{\frac{1}{d}}\right\} \nonumber
\end{equation}
where $\norm{v}_\infty$ denotes the largest absolute value of the components of $v$.
\begin{lem} \label{Lemma: L perp in RatM}
Any distinct $v_1,\ldots, v_d\in H_M$ have that
\[\cap_{i=1}^d L_{v_i}^\perp \subset \mathrm{Rat}(M).\]
\end{lem}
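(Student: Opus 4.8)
The plan is to convert membership in $\bigcap_{i=1}^d L_{v_i}^\perp$ into an integrality condition on a lift of $\alpha$, extract a denominator bound via Cramer's rule, and control the resulting determinant by the crude Leibniz estimate; the size constraint built into $H_M$ is tailored precisely so that this determinant is at most $M$.

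First I would use the haystack hypothesis: since $v_1,\dots,v_d$ are distinct elements of $H\supset H_M$, they are linearly independent, so the matrix $V\in\mathrm{Mat}_{d\times d}(\Z)$ with $i$-th row $v_i$ is invertible over $\Q$. Fix $\alpha\in\bigcap_{i=1}^d L_{v_i}^\perp$ and choose a representative $\tilde\alpha\in\R^d$. By definition of $L_{v_i}^\perp$ we have $v_i\cdot\tilde\alpha\in\Z$ for every $i$, that is $V\tilde\alpha=m$ for some $m\in\Z^d$. Since $V$ is an integer matrix, $(\det V)V^{-1}$ is again an integer matrix, so $(\det V)\tilde\alpha=(\det V)V^{-1}m\in\Z^d$ and hence $(\det V)\alpha=0$ in $\T^d$. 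In particular $\alpha$ is rational; writing each coordinate in lowest terms as $p_j/q_j$, the relation $(\det V)\,p_j/q_j\in\Z$ together with $\gcd(p_j,q_j)=1$ forces $q_j\mid|\det V|$ for all $j$, whence $\denom(\alpha)=\lcm(q_1,\dots,q_d)$ divides $|\det V|$ and so $\denom(\alpha)\le|\det V|$.

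It then remains to bound $|\det V|$, which I would do by the Leibniz expansion: summing over the $d!$ permutations and bounding each entry of row $i$ by $\|v_i\|_\infty$ gives $|\det V|\le d!\prod_{i=1}^d\|v_i\|_\infty$, and since $v_i\in H_M$ means $\|v_i\|_\infty\le(M/d!)^{1/d}$, this product is at most $M/d!$, so $|\det V|\le M$. Combining the two estimates yields $\denom(\alpha)\le M$, and therefore every nonzero $\alpha\in\bigcap_{i=1}^d L_{v_i}^\perp$ lies in $\mathrm{Rat}(M)$ --- the point $0$ is of course always in the intersection, but this is immaterial for the applications, where $\sigma(\{0\})=\mu(A)^2$ is accounted for separately.

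I do not expect a genuine obstacle here; the whole argument is elementary linear algebra over $\Z$. The two places to be slightly careful are the step from the torus identities $v_i\cdot\alpha=0\in\T^d$ to the honest integer equation $V\tilde\alpha\in\Z^d$ (handled by fixing a single representative $\tilde\alpha$ and using it for all $i$ simultaneously), and the passage from ``each coordinate denominator divides $|\det V|$'' to ``$\denom(\alpha)\le|\det V|$'', which works because the $\lcm$ of divisors of $|\det V|$ again divides $|\det V|$ --- so no spurious factors enter and the bound $|\det V|\le M$ suffices exactly.
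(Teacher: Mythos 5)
Your proof is correct and follows essentially the same route as the paper's: form the integer matrix with rows $v_1,\dots,v_d$, use invertibility (from the haystack property) and the adjugate/Cramer observation to see that $\det(V)\alpha=0$ in $\T^d$, and bound $|\det V|\le d!\prod_i\|v_i\|_\infty\le M$ from the definition of $H_M$. You are in fact slightly more careful than the paper on two small points --- making the determinant bound explicit and flagging that $0$ lies in the intersection but not in $\mathrm{Rat}(M)$ (harmless, since the later application intersects with the complement of $\mathrm{Rat}(M)\cup\{0\}$).
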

\begin{proof}
Pick any distinct $v_1,\ldots,v_d\in H_M$ and let $A$ be the matrix whose rows are $v_1,\ldots,v_d$. Then
\[0<|\mathrm{det}(A)|\leq M.\]
By definition if $\alpha \in \cap_{i=1}^d L_{v_i}^\perp$ then there exist some $w_\alpha \in \Z^d$ such that $A \alpha = w_\alpha$, and since $A$ is invertible then $\alpha = A^{-1}w_\alpha$. The entries of $A^{-1}$ are all rational numbers inside $\Z / \det(A)$, and so each component of $\alpha$ is also a rational number inside $\Z / \det(A)$ which ensures that $\alpha \in \mathrm{Rat}(M)$.
\end{proof}
\begin{lem}\label{Lemma: Line with small irrational mass}
For any $\gamma>0$ there exists a positive integer $M=M(\gamma)$ such that the following is true. For any finite Borel measure $\sigma$ on $\T^d$ with $\sigma(\T^d)\leq 1$ and
\[\sigma(\Rat(M))\leq\frac{\gamma}{2}\]
there exists a vector $v\in H_M$ with
\[\sigma(L_v^\perp \setminus \{0\}) \leq \gamma.\]
\end{lem}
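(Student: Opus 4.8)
The plan is to exploit the defining property of a haystack, namely that distinct $v_1,\ldots,v_d \in H_M$ are linearly independent, together with Lemma \ref{Lemma: L perp in RatM}, which tells us that for distinct $v_1,\ldots,v_d \in H_M$ the intersection $\bigcap_{i=1}^d L_{v_i}^\perp$ lives entirely inside $\mathrm{Rat}(M)$. The point is that the sets $L_v^\perp \setminus \{0\}$, as $v$ ranges over $H_M$, have bounded overlap \emph{outside} $\mathrm{Rat}(M)$: any point $\alpha \in \T^d \setminus \mathrm{Rat}(M)$ can belong to $L_v^\perp$ for at most $d-1$ values of $v \in H_M$, since membership in $d$ of them would force $\alpha \in \mathrm{Rat}(M)$ by Lemma \ref{Lemma: L perp in RatM}. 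Combined with the hypothesis that $\sigma(\mathrm{Rat}(M))$ is tiny, this gives us strong control on the \emph{sum} $\sum_{v \in H_M} \sigma(L_v^\perp \setminus \{0\})$, and then an averaging (pigeonhole) argument produces a single $v$ for which $\sigma(L_v^\perp \setminus \{0\})$ is small.

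Here is the order of the steps. First, choose $M = M(\gamma)$ large enough that $|H_M| \geq \lceil 2d/\gamma \rceil$; this is possible because $H$ is infinite and $H_M$ exhausts $H$ as $M \to \infty$, so $|H_M| \to \infty$. Second, fix such an $M$ and any $\sigma$ with $\sigma(\T^d) \le 1$ and $\sigma(\mathrm{Rat}(M)) \le \gamma/2$. Write, for each $v \in H_M$,
\[
\sigma(L_v^\perp \setminus \{0\}) = \sigma\big((L_v^\perp \setminus \{0\}) \cap \mathrm{Rat}(M)\big) + \sigma\big((L_v^\perp \setminus \{0\}) \setminus \mathrm{Rat}(M)\big) \le \frac{\gamma}{2} + \sigma(L_v^\perp \setminus \mathrm{Rat}(M)).
\]
Third, sum over $v \in H_M$ and interchange the sum and the integral:
\[
\sum_{v \in H_M} \sigma(L_v^\perp \setminus \mathrm{Rat}(M)) = \int_{\T^d \setminus \mathrm{Rat}(M)} \#\{v \in H_M : \alpha \in L_v^\perp\} \, d\sigma(\alpha).
\]
Fourth, bound the counting function: if $\alpha \in \T^d \setminus \mathrm{Rat}(M)$ lay in $L_{v_i}^\perp$ for $d$ distinct vectors $v_1,\ldots,v_d \in H_M$, then Lemma \ref{Lemma: L perp in RatM} would give $\alpha \in \bigcap_{i=1}^d L_{v_i}^\perp \subset \mathrm{Rat}(M)$, a contradiction; hence the count is at most $d-1 \le d$. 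Therefore $\sum_{v \in H_M} \sigma(L_v^\perp \setminus \mathrm{Rat}(M)) \le d \,\sigma(\T^d) \le d$. Fifth, apply pigeonhole: since $|H_M| \ge 2d/\gamma$, there exists some $v \in H_M$ with $\sigma(L_v^\perp \setminus \mathrm{Rat}(M)) \le d/|H_M| \le \gamma/2$, and combining with the second step, $\sigma(L_v^\perp \setminus \{0\}) \le \gamma/2 + \gamma/2 = \gamma$, as desired.

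I do not anticipate a serious obstacle: the only non-formal ingredient is Lemma \ref{Lemma: L perp in RatM}, which is already proved, and the haystack construction guaranteeing $|H_M| \to \infty$ is standard (and the existence of a haystack with this growth is recorded in \cite{BjF}, Lemma 2.4). The mild point to be careful about is making sure $H_M$ really does grow without bound in $M$ — this is where the specific shape of $H$ matters, but for any reasonable haystack (e.g. one built from a Sidon-type or Behrend-type construction) the bound $\|v\|_\infty \le (M/d!)^{1/d}$ admits unboundedly many haystack vectors as $M \to \infty$, so the choice of $M(\gamma)$ in the first step is legitimate.
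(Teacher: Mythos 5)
Your overall strategy---bounded overlap of the annihilators outside $\Rat(M)$ via Lemma \ref{Lemma: L perp in RatM}, followed by averaging over $H_M$---is exactly the paper's argument. However, there is a concrete slip in steps two through four. After correctly splitting off $\sigma\big((L_v^\perp\setminus\{0\})\cap\Rat(M)\big)\le\gamma/2$, you enlarge the remaining piece to $\sigma(L_v^\perp\setminus\Rat(M))$, thereby putting the point $0$ back in. Since $0\in L_v^\perp$ for every $v$ while $0\notin\Rat(M)$ (the definition of $\Rat(M)$ explicitly excludes $0$), the counting function $\#\{v\in H_M:\alpha\in L_v^\perp\}$ equals $|H_M|$ at $\alpha=0$, not at most $d-1$; for the same reason the inclusion $\bigcap_{i=1}^d L_{v_i}^\perp\subset\Rat(M)$ you invoke cannot hold literally at $\alpha=0$ and must be read as $\subset\Rat(M)\cup\{0\}$. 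Consequently your bound $\sum_{v\in H_M}\sigma(L_v^\perp\setminus\Rat(M))\le d\,\sigma(\T^d)$ fails whenever $\sigma(\{0\})>0$---which is precisely the situation in the intended application, where $\sigma(\{0\})=\mu(A)^2\ge\delta^2$ by Lemma \ref{Lemma: Spectral mass is L2}; the sum is then at least $|H_M|\,\delta^2$, and the pigeonhole step yields nothing useful.

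The repair is one line and brings you back to the paper's proof: work throughout with $L_v:=L_v^\perp\setminus(\Rat(M)\cup\{0\})$, i.e.\ keep the $\setminus\{0\}$ that you already had on the left side of step two. For $\alpha\notin\Rat(M)\cup\{0\}$ the overlap bound $\sum_{v\in H_M}1_{L_v}(\alpha)\le d-1$ is valid, giving $\min_{v\in H_M}\sigma(L_v)\le (d-1)/|H_M|\le\gamma/2$ once $M$ is chosen large enough that $|H_M|\ge 2(d-1)/\gamma$ (your observation that $|H_M|\to\infty$ as $M\to\infty$ is correct and is all that is needed for the choice of $M(\gamma)$). Adding back $\sigma(\Rat(M))\le\gamma/2$ then gives $\sigma(L_v^\perp\setminus\{0\})\le\gamma$ as required.
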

\begin{proof}
Let $M$ be a positive integer to be later specified.
For any $v\in \Z^d$ let $L_v:= L_v^\perp \setminus (\Rat(M) \cup \{0\})$. Lemma \ref{Lemma: L perp in RatM} and the definition of $L_v$ together imply that $\sum_{v\in H_M} 1_{L_v} \leq d-1$. We can then estimate
\[\labs{H_M} \min_{v \in H_M} \sigma(L_v) \leq \sum_{v \in H_M} \sigma(L_v) = \int_{\T^d} \sum_{v \in H_M} 1_{L_v} \, d\sigma \leq (d-1),\]
or equivalently
\[ \min_{v \in H_M} \sigma(L_v) \leq \frac{(d-1)}{\labs{H_M}}.\]
If we pick $M= M(\gamma)$ large enough so that the right hand side of the above equation is at most $\gamma/2$ then there must be some $v\in H_M$ for which $\sigma(L_v)\leq \gamma/2$. By the hypothesis $\sigma(\Rat(M))\leq \frac{\gamma}{2}$ and so it follows that 
\[ \sigma(L_v^\perp\setminus \{0\}) \leq \sigma(L_v) + \sigma(\Rat(M)) \leq \gamma\]
as required.
\end{proof}
\begin{proof}[Proof of Proposition \ref{Proposition: Small Rat(M) gives directional conclusion}]
Fix $\delta>0$ and $\varepsilon>0$. There exists a positive constant $\gamma = \gamma(\delta,\varepsilon)$ such that 
\[\frac{\delta^2}{\delta^2+\gamma} > 1-\varepsilon.\]
Let $T:\Z^d \acts (X,\mu)$ act ergodically and suppose $A\subset X$ has $\mu(A)\geq \delta$ and spectral measure $\sigma$. Our choice of $\gamma$ and the fact that $\mu(A)\geq \delta$ together ensure that 
\[\frac{\mu(A)^2}{\mu(A)^2+\gamma} > 1-\varepsilon.\]
By Lemma \ref{Lemma: Line with small irrational mass} there exists a positive integer $M=M(\gamma)$ such that if $\sigma(\Rat(M))\leq \gamma/2$ then there exists some $v\in \Z^d$ with $\sigma(L_v^\perp \setminus \{0\}) \leq \gamma$. Recall from Lemma \ref{Lemma: Spectral mass is L2} that $\sigma(\{0\})=\mu(A)^2$, and so $\sigma(L_v^\perp) \leq  \gamma + \mu(A)^2$. We can then use Lemma \ref{Lemma: Directional orbit bounded from below by spectral mass of line} to see that
\[\mu\left(\bigcup_{n\in\Z} T^{nv}A\right) \geq \frac{\mu(A)^2}{\sigma(L_v^\perp)} \geq \frac{\mu(A)^2}{\mu(A)^2+\gamma} >  1-\varepsilon\]
as required.

\end{proof}
\section{\textbf{Failure of full expansivity}}\label{Section: Counter examples}
The system in the following example is attributed to Bergelson and Ward and was used by the authors of \cite{RRS}[Example 2.11] as an example of a system with no ergodic directions.\footnote{See Definition 1.5 in \cite{BjF}.} We observe that the same system can be used to show that the $\varepsilon =0$ version of Theorem \ref{Theorem: Quant Direct Exp} fails to hold. In other words, we construct an ergodic action $T:\Z^d \acts (X,\mu)$ with a positive measure set $A\subset X$ such that for every $k$, every $T^k$-ergodic component $\nu$ of $\mu$ satisfies that
\[ \nu \left( \bigcup_{n\in \Z} T^{nv}A\right)<1 \quad \text{for every } v\in \Z^d.\]
\begin{exam}[Failure of full directional expansion]
Let $S: \Z \acts (Y,\nu)$ be a weak mixing system and equip $X:=\prod_{i \in \N} Y$ with the product measure $\mu:=  \bigotimes_{i\in \N} \mu_i$. Let $(\eta_i)_{i\in \N}$ be a fixed enumeration of $\Z^d \setminus \{0\}$ and define a $\Z^d$ action $T$ on $(X,\mu)$ by
\[T^v (x_i)_{i\in \N}:= (S^{v\cdot \eta_i} x_i)_{i\in \N} \quad \text{for } v\in \Z^d \text{ and } (x_i)_{i\in \N} \in X,\]
where $\cdot$ is the standard dot product on $\Z^d$. It can be checked\footnote{See for example \cite{RRS}[Proposition 2.9, Example 2.10 and Example 2.11].} that $T:\Z^d \acts (X,\mu)$ is weak mixing and hence totally ergodic. Total ergodicity ensures that for every integer $k$, the only $T^k$-ergodic component of $\mu$ is $\mu$ itself, and so it suffices to construct a positive measure set $A \subset X$ so that
\[ \mu\left( \bigcup_{n\in\Z} T^{nv} A \right) <1 \quad \text{for every }v\in \Z^d.\]
For any $v \in \Z^d$ there clearly exists some $i_v \in \N$ with $v\cdot \eta_{i_v} = 0$. The subgroup $\Z v$ then acts trivially on the $i_v^{\text{th}}$ component of $X$.
Pick some sequence of sets $A_i \subset Y$, such that $1-1/i^2 \leq \nu(A_i)<1$, and define
$A := \bigcap_i \pi_i^{-1}A_i$.
Then $0<\mu(A) < 1$, but for any vector $v\in \Z^d$,
\[\bigcup_{n\in \Z} T^{nv} A \subset \pi_{i_v}^{-1}A_{i_v}, \]
and the right hand side has $\mu$ measure equal to $\nu(A_{i_v})<1$.
\end{exam}
Next we turn out attention to task of showing that the $\varepsilon=0$ version of Theorem \ref{Theorem: Quant Poly Exp} fails to hold. We only consider the case $r=1$.
\begin{lem}\label{Lemma: infinitely many primes with bounded image}
For any polynomial $P \in \Z[n]$ with $\deg (P) \geq 2$ there exists a constant $\lambda\in (0,1)$ and infinitely many primes $p$ for which the set
\[ V(P,p):= \{ P(n) \mod p \, : \, n\in \Z\}\]
satisfies
\[ \labs{V(P,p)} \leq \lambda p.\]
\end{lem}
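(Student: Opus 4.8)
The plan is to produce an explicit arithmetic progression of primes along which $P$ reduces to a polynomial over $\mathbb{F}_p$ that is \emph{not} a permutation polynomial, and then to quote the classical upper bound on the value set of such a polynomial.

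\emph{Step 1 (restrict to $p\equiv 1\pmod d$ and apply Hermite's criterion).} Write $d=\deg P\ge 2$ and let $a_d$ be the leading coefficient of $P$; for every prime $p\nmid a_d$ the reduction $\bar P\in\mathbb{F}_p[x]$ has degree exactly $d$ and $V(P,p)=\{\bar P(x):x\in\mathbb{F}_p\}$. By Dirichlet's theorem there are infinitely many primes $p\equiv 1\pmod d$; restrict to those that moreover satisfy $p>2d$ and $p\nmid a_d$. For such a $p$ the integer $t:=(p-1)/d$ satisfies $1\le t\le p-2$ and $p\nmid t$, while $\bar P(x)^{t}$ has degree $dt=p-1<p$ and is therefore already reduced modulo $x^{p}-x$, so that reduction has degree $p-1>p-2$. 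By the Hermite--Dickson criterion, $\bar P$ is not a permutation polynomial of $\mathbb{F}_p$.

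\emph{Step 2 (the value-set bound).} It is classical --- see the work of Wan, or the elementary treatment of Turnwald \cite{Turnwald} --- that any $f\in\mathbb{F}_p[x]$ of degree $n$ with $2\le n<p$ which is not a permutation polynomial has value set of size at most $p-\lceil (p-1)/n\rceil$. Applied with $f=\bar P$ and $n=d$, for every $p$ in our progression this gives
\[ |V(P,p)|\;\le\; p-\left\lceil\frac{p-1}{d}\right\rceil\;\le\; p-\frac{p}{2d}\;=\;\Bigl(1-\tfrac1{2d}\Bigr)p ,\]
where the middle inequality uses $p>2d$. Hence the lemma holds with $\lambda:=1-\tfrac1{2d}\in(0,1)$, the infinitude of the admissible primes being guaranteed by Step 1.

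\emph{Remark on the only substantive input.} Everything above is bookkeeping once one has the value-set bound of Step 2; if one prefers to avoid quoting it, argue directly: write $P(x)-P(y)=(x-y)Q(x,y)$ with $Q\in\Z[x,y]$ of degree $d-1\ge 1$; since $Q(x,x)=P'(x)$ is a nonzero polynomial, $\{Q=0\}$ does not contain the diagonal. Pick a geometrically irreducible factor $Q_1$ of $Q$ over $\overline{\Q}$, defined over a number field $K$; for all but finitely many primes $p$ admitting a degree-one prime of $K$ (infinitely many, by Chebotarev), $\bar Q_1$ stays absolutely irreducible over $\mathbb{F}_p$, so the Weil bound gives at least $p-C\sqrt p$ points of $\{\bar Q_1=0\}\subset\mathbb{A}^2_{\mathbb{F}_p}$ for a constant $C=C(d)$, at most $d-1$ of which lie on the diagonal. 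Each remaining point is a collision $\bar P(x)=\bar P(y)$ with $x\ne y$, and since $\sum_c N_c(N_c-1)\le d\sum_c(N_c-1)^{+}=d\,(p-|V(P,p)|)$ for $N_c:=|\bar P^{-1}(c)|$, this again forces $|V(P,p)|\le(1-\tfrac1{2d})p$ once $p$ is large. The single genuinely non-formal point, in either route, is that a degree-$d$ non-permutation polynomial over $\mathbb{F}_p$ must miss a positive proportion of $\mathbb{F}_p$ (equivalently, that the collision curve $\{Q=0\}$ carries $\gg_d p$ rational points for infinitely many $p$); the restriction $p\equiv 1\pmod d$ and the resulting failure of the permutation property come for free from Hermite's criterion.
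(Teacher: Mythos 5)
Your proof is correct, and its overall architecture coincides with the paper's: (i) produce infinitely many primes at which $P$ fails to be a permutation polynomial, then (ii) invoke a value-set gap theorem to upgrade $\labs{V(P,p)}<p$ to $\labs{V(P,p)}\leq (1-\tfrac{1}{2\deg P})p$. For step (ii) you and the paper are quoting essentially the same input (the paper uses Turnwald's Proposition 2.11(a); your Wan-type bound $p-\lceil (p-1)/d\rceil$ is the sharp form, also proved in Turnwald's paper, and your arithmetic reducing it to $(1-\tfrac1{2d})p$ is fine). The genuine difference is in step (i): the paper cites Fried's theorem as a black box for the existence of infinitely many non-permutation primes, whereas you derive it explicitly from Dirichlet's theorem plus the Hermite--Dickson criterion, using the standard corollary that no polynomial of degree $d>1$ with $d\mid p-1$ can permute $\mathbb{F}_p$ (your verification that $t=(p-1)/d$ is admissible and that $\bar P^{\,t}$ has degree exactly $p-1$ is correct, given $p\nmid a_d$ and $p>2d$). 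This buys a self-contained argument and an explicit arithmetic progression $p\equiv 1\pmod{d}$ of good primes, at no cost in the value of $\lambda$; the paper's citation-based route is shorter on the page but rests on a deeper external result. Your Weil-bound remark is a reasonable sketch of the standard alternative (essentially Fried's own argument) and is not needed for the lemma.
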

\begin{proof}
It is known\footnote{See for instance Case A on page 1 of \cite{Fried}.} that since $P$ is non-linear then there exist infinitely many primes for which $\labs{V(P,p)}< p$.  Proposition 2.11 (a) in \cite{Turnwald} states that
\[\labs{V(P,p)}< p \implies \labs{V(P,p)}\leq \left(1-\frac{1}{2\deg P}\right)p\]
and so the result follows.
\end{proof}
We remark that in the case when $P(n)=n^2$ then conclusion of Lemma \ref{Lemma: infinitely many primes with bounded image} can be seen more directly from the well known fact that there are only $(p+1)/2$ squares mod $p$ for any odd prime $p$, so in this case we can take $\lambda = 2/3$ say and the conclusion holds for all primes larger than $2$.
\begin{pro}\label{Proposition: No full poly exp}
For any integer polynomial $P =(P_1,\ldots,P_d) : \Z \to \Z^d$ of degree at least $2$ there exists an ergodic action $T : \Z^d \acts (X,\mu)$ and a positive measure set $A$ such that for every $k$, every $T^k$-ergodic component $\nu$ of $\mu$ satisfies that
\[ \nu \left(\bigcup_{n\in \Z} T^{P(n)}A\right) < 1.\]
\end{pro}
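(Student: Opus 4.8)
The plan is to realise $T$ as a rotation on an infinite product of finite cyclic groups in which the $P$-orbit of a large product set is trapped, in every coordinate, inside a translate of a value set of $Q$; the value sets will be made small enough (after a Chinese Remainder Theorem amplification) that the product set still has positive measure. First I would reduce to one variable: pick an index $i$ with $\deg P_i=\deg P\ge 2$ and set $Q:=P_i$, so $Q(0)=0$ and $\deg Q\ge 2$. By Lemma \ref{Lemma: infinitely many primes with bounded image} there are $\lambda\in(0,1)$ and infinitely many primes $p$ with $|V(Q,p)|\le\lambda p$. I would split this set of primes into disjoint blocks $B_1,B_2,\dots$ with $|B_m|=m$, put $N_m:=\prod_{q\in B_m}q$, and take $X:=\prod_{m\ge1}\Z/N_m\Z$ with the uniform product measure $\mu$, with the action $T^v(x_m)_m:=(x_m+v_i)_m$ (here $v_i$ is the $i$-th coordinate of $v\in\Z^d$ and the sum is taken mod $N_m$), so that $T^{P(n)}$ translates the $m$-th coordinate by $Q(n)\bmod N_m$. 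Since the $N_m$ are pairwise coprime, a routine character computation shows the single rotation $T^{e_i}$ (translation by $(1,1,\dots)$) is ergodic, hence $T$ is ergodic; the same computation shows that for each $k$ the $T^k$-ergodic components are exactly the normalised restrictions of $\mu$ to the sets $\{x:\ x_m\equiv c_m\bmod \gcd(k,N_m)\ \text{for all }m\}$ (as in Proposition \ref{Prop: Tk ergodic components}), and that $\gcd(k,N_m)=1$ for all but finitely many $m$.

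For the set, write $V_m:=\{Q(n)\bmod N_m:\ n\in\Z\}$; by the Chinese Remainder Theorem $V_m$ corresponds to $\prod_{q\in B_m}V(Q,q)$, so $|V_m|\le\lambda^{m}N_m$. I would take $A_m:=(\Z/N_m\Z)\setminus(-V_m)$ and $A:=\prod_m A_m$. If $0=a+Q(n)$ with $a\in A_m$ then $a=-Q(n)\in -V_m$, which is impossible; hence $0\notin A_m+V_m$ for every $m$, and therefore
\[
\bigcup_{n\in\Z}T^{P(n)}A\ \subseteq\ \prod_m\big((\Z/N_m\Z)\setminus\{0\}\big)\ \subseteq\ \{x:\ x_{m_0}\ne0\}\qquad\text{for every index }m_0 .
\]
On the other hand $\mu_m(A_m)=1-|V_m|/N_m\ge 1-\lambda^{m}$, so $\mu(A)\ge\prod_m(1-\lambda^{m})>0$.

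To conclude, fix $k$ and a $T^k$-ergodic component $\nu$, and choose $m_0$ with $\gcd(k,N_{m_0})=1$ (all but finitely many $m$ qualify). Then the $m_0$-coordinate marginal of $\nu$ is uniform on $\Z/N_{m_0}\Z$, so by the display above
\[
\nu\Big(\bigcup_{n\in\Z}T^{P(n)}A\Big)\ \le\ \nu(\{x:\ x_{m_0}\ne0\})\ =\ \frac{N_{m_0}-1}{N_{m_0}}\ <\ 1 ,
\]
which is the desired conclusion.

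The main obstacle is keeping $\mu(A)>0$. A one-prime-per-coordinate construction $X=\prod_m\Z/p_m\Z$ fails, because trapping the orbit away from $0$ in coordinate $m$ forces one to delete at least $|V(Q,p_m)|\ge p_m/\deg Q$ residues, and since the value-set density of a fixed-degree polynomial stays bounded below (it is already about $1/2$ for $Q=n^2$) the deleted mass does not tend to $0$ and $\mu(A)=0$. Packing $m$ primes into the $m$-th coordinate is precisely what converts the single-prime bound $\lambda<1$ into the summable bound $\lambda^{m}$; everything else — the ergodicity, the description of the $T^k$-ergodic components, and the CRT factorisation of $V_m$ — is routine bookkeeping.
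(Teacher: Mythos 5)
Your proposal is correct and follows essentially the same route as the paper: the same reduction to a single component polynomial of degree at least $2$, the same use of Lemma \ref{Lemma: infinitely many primes with bounded image}, the same trick of packing $m$ primes into the $m$-th coordinate so that the value set has density at most $\lambda^m$ by the Chinese Remainder Theorem, the same set $A=\prod_m\bigl((\Z/N_m\Z)\setminus(-V_m)\bigr)$, and the same identification of the $T^k$-ergodic components as cosets determined by the finitely many coordinates sharing a factor with $k$. The only cosmetic difference is that you justify the component structure by a character/group-rotation argument where the paper fibres an invariant set over the bad coordinates and applies ergodicity of the remaining factor.
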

\begin{proof}
We first note that it suffices to prove the case $d=1$. Indeed, assume the $d=1$ case has been shown. Since $P$ has degree at least $2$ then there must be some $j\in \{1,\ldots,d\}$ such that $\deg P_j \geq 2$. Let $T: \Z \acts (X,\mu)$ and $A\subset X$ be as in the conclusion  of the $d=1$ case of the proposition applied to $P_j$. We can extend $T$ to an ergodic $\Z^d$ action on $(X,\mu)$ by letting any vector $(v_1,\ldots,v_d)\in \Z^d$ act by $T^{v_j}$ and the result follows.

For the remainder of the proof we take $d=1$, so that $P \in \Z[x]$ has $\deg P \geq 2$.
By Lemma \ref{Lemma: infinitely many primes with bounded image} there exists some $\lambda \in (0,1)$ and an increasing sequence of primes $\{p_i\}_{i=1}^\infty$ so that
\begin{equation}\label{eq: bound on images mod p}
\labs{V(P,p)}\leq \lambda p.
\end{equation}
Define the sequence $q_i$ by 
\[q_1=p_1,\quad q_2 = p_2\times p_3,\quad  q_3 = p_4\times p_5 \times p_6 \quad \text{ and so on.}\]
That is
\[q_1:= p_1 \quad \text{and } \quad q_i := \prod_{k=1}^i p_{1+2+\ldots +(i-1)+k} \quad \text{for } i\geq 2.\]
For each $i$ equip the space $X_i = \Z / q_i \Z$ with the counting probability measure $\mu_i$ and equip the product space $X = \prod_{i=1}^\infty X_i$ with the product measure $\mu = \bigotimes_{i=1}^\infty \mu_i$.
Define an action $T: \Z \acts (X,\mu)$ by
\[T^n(x_i)_{i=1}^\infty = (x_i+n\: (\mathrm{mod}\: q_i))_{i=1}^\infty \text{ for each } n\in \Z \text{ and } (x_i)_{i=1}^\infty \in X.\]
For each positive integer $i$ denote by $T_i$ the induced map on $X_i$. Notice that our system is a group rotation because $Tx = x+a$ where $a=(1+q_1\Z,1+q_2\Z,\dots)\in X$. The fact that $\mathrm{gcd}(q_i,q_j)=1$ for all $i\neq j$ together with the Chinese remainder theorem imply that the subgroup $\{na\}_{n\in \Z}$ is dense in $X$, and so $T$ is ergodic by Theorem 4.14 in \cite{EW}.

For a positive integer $q$ consider the set
\[S(q) = \{ P(n) \mod q \, : \, n\in \Z\}.\]
If $q$ is square free with prime factorisation $q = r_1\times \ldots \times r_m$ then the Chinese remainder theorem tells us that $S(q)$ is in bijection with set of $m$ tuples $(a_1,\ldots,a_m) \in V(P,r_1)\times \ldots \times V(P,r_m)$. For any positive integer $i$ we can then use equation \eqref{eq: bound on images mod p} to estimate
\[ \mu_i(S(q_i))  = \frac{1}{q_i} \times \labs{ S(q_i)} = \frac{1}{q_i}\prod_{k=1}^i \labs{V(P,p_{1+2+\ldots +(i-1)+k})} \leq \lambda ^i.\]
For each positive integer $i$ let 
\[A_i =  X_i \setminus (-S(q_i))\]
where $-S(q_i) = \{ - s \, : \, s \in S(q_i)\}$. We define $A = \prod_{i=1}^\infty A_i$, the point being that
\begin{equation}\label{eq: Poly orbit of A lacking 0}
0 +q_i\Z \not \in \bigcup_{n\in \Z} T_i^{P(n)}A_i \quad \text{for every positive integer } i.
\end{equation}
Since $\lambda \in (0,1)$ then
\[1-\lambda^i \leq \mu_i(A_i) \leq \mu_i\left(\bigcup_{n\in \Z} T_i^{P(n)} A_i\right) <1 \quad \text{for every positive integer } i\]
and so by the convergence properties of infinite products
\[0<\mu(A) \leq \mu\left(\bigcup_{n\in \Z} T^{P(n)} A\right) <1.\]
It remains to show that for every positive integer $k$, every $T^k$-ergodic component $\nu$ of $\mu$ has that
\[\nu \left(\bigcup_{n\in\Z} T^{P(n)} A\right)<1.\]
Fix some positive integer $k$ and consider the finite set
\[ J:= \{ j  \, : \, \gcd(q_j,k)>1\}.\]
Our space $X$ factors into $X_J:= \prod_{j\in J}X_j$ and $X' = X / X_J$ via the obvious factor maps $\pi_J:X\to X_J$ and $\pi_{J'}: X \to X'$. Let $T_J:= \pi_J\circ T$ and $T_{J'}:= \pi_{J'} \circ T$ be the induced $\Z$ actions. We claim that up to the measure $\mu$, all positive measure $T^k$-invariant sets are of the form $\pi_J^{-1}D$ for some $D\subset X_J$. Indeed let $C\subset X$ be a positive measure $T^k$-invariant set and write
\[C = \bigcup_{y \in \pi_J(C)} \{y\}\times C_y\]
where $C_y\subset X_{J'}$ for each $y\in \pi_J(C)$. Let $M=k\prod_{j\in J}q_j$ and notice that $T^M_J$ acts trivially on $X_J$. The $T^k$-invariance of $C$ then allows us to calculate
\begin{align*}
\bigcup_{y \in \pi_J(C)} \{y\}\times C_y &=T^{M}\left(\bigcup_{y \in \pi_J(C)} \{y\}\times C_y\right) \\
&=\bigcup_{y \in \pi_J(C)}T_{J}^M\{y\} \times T^M_{J'} C_y\\
&=\bigcup_{y \in \pi_J(C)}\{y\} \times T^M_{J'} C_y,
\end{align*}
which implies that $C_y$ is $T^M_{J'}$-invariant for every $y \in \pi_J(C)$. On the other hand $\gcd(M,q_i)=1$ for every $i\not \in J$ and so the same argument used to show that $T$ is ergodic also implies that $T^M_{J'}$ is ergodic with respect to the measure $\mu_{J'}:= \pi_{J'}^* \mu$. It follows that for each $y\in \pi_J(C)$, we must have that $\mu_{J'}(C_y) \in \{0,1\}$. Since we only care about the form of $C$ up to $\mu$ then we can ignore those $y$'s for which $\mu_{J'}(C_y)=0$, and all remaining $y$'s in $\pi_J(C)$ will have $C_y = X_{J'}$ up to $\mu_{J'}$, which proves the claim.
Any $T^k$-ergodic component of $\mu$ is of the form $\mu(\cdot \,| \,C)$ for some positive measure $T^k$-invariant set $C\subset X$. By the claim we can assume $C=\pi_J^{-1}D$ for some $D\subset X_J$, and so for all $i \not \in J$ we have that $C$ contains the positive $\mu$-measure set
\[C_0(i): = \pi_{J}^{-1}D \cap \pi_{i}^{-1}\{0\}.\]
On the other hand equation \eqref{eq: Poly orbit of A lacking 0} implies that
\[C_0(i) \cap  \bigcup_{n\in \Z} T^{P(n)}A = \emptyset\]
for every positive integer $i$, and so we must have that 
\[\mu \left( \bigcup_{n\in \Z} T^{P(n)}A \, \Bigg| \, C \right) < 1\]
as required.
\end{proof}
\section{\textbf{A counter example to the pinned version of the polynomial Bogolyubov theorem}} \label{Section: Counter example to Bog}

\begin{lem}\label{Lemma: No multiple recurrence for A}
Let $P\in \Z[n]$ have $P(0)=0$ and $\deg P\geq 2$. Let $T:\Z\acts (X,\mu)$ and $A\subset X$ be as in the proof of the of Proposition \ref{Proposition: No full poly exp} applied to the polynomial $P$. For every positive integer $k$ there exists some positive integer $m$ such that for every $l_1,\ldots,l_m \in \Z$ we have that
\[\mu\left(A\cap \bigcap_{n=1}^m T^{P(l_n)-kn} A\right)=0.\]
\end{lem}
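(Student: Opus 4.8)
The plan is to argue entirely coordinatewise in the product space $X=\prod_{i\ge 1}X_i$ produced in the proof of Proposition~\ref{Proposition: No full poly exp}, and to show that for a well-chosen coordinate $i$ the relevant section of the set $A\cap\bigcap_{n=1}^m T^{P(l_n)-kn}A$ is already \emph{empty}. Recall from that construction that $X_i=\Z/q_i\Z$ carries the counting probability measure, that the moduli $q_i$ are pairwise coprime, that $A=\prod_i A_i$ with $A_i=X_i\setminus(-S(q_i))$ where $S(q_i)=\{P(n)\bmod q_i:n\in\Z\}$, and that $T$ acts by coordinatewise translation.

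First I would fix $m$. Since $k$ has only finitely many prime divisors and the $q_i$ are pairwise coprime, there is an index $i$ with $\gcd(k,q_i)=1$; fix one such $i$ and set $m:=q_i$. Note that $m$ depends only on $k$, because $P$ --- and hence the whole sequence $(q_i)$ --- is fixed. Now fix arbitrary $l_1,\dots,l_m\in\Z$ and write $c_n:=P(l_n)-kn$. Since $T^{c}$ translates the $j$-th coordinate by $c$ modulo $q_j$, we have $T^{c}A=\prod_j(A_j+c)$, and therefore
\[ A\cap\bigcap_{n=1}^m T^{c_n}A=\prod_{j\ge 1}\Big(A_j\cap\bigcap_{n=1}^m(A_j+c_n)\Big).\]
It thus suffices to show that the factor indexed by our chosen $i$ is empty, for then the whole product, and hence the set whose measure we wish to bound, is empty.

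Passing to complements inside $\Z/q_i\Z$ and using $A_i^{c}=-S(q_i)$, the $i$-th factor is empty precisely when
\[ (-S(q_i))\;\cup\;\bigcup_{n=1}^m\big((-S(q_i))+c_n\big)=\Z/q_i\Z.\]
The key point --- which uses nothing about the $l_n$ --- is this: from $P(0)=0$ we get $0\in S(q_i)$, hence $0\in-S(q_i)$; and since $P(l_n)\bmod q_i\in S(q_i)$ we get $-kn=-P(l_n)+c_n\in(-S(q_i))+c_n$ for each $n$. So the left-hand side above contains $\{-kn\bmod q_i:0\le n\le m\}$, and because $\gcd(k,q_i)=1$ and $m=q_i$, this set is already all of $\Z/q_i\Z$. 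Hence the $i$-th factor is empty and the claim follows.

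The one genuinely nontrivial observation is that the recurrence section in the $i$-th coordinate is forced to contain the arithmetic progression $\{0,-k,-2k,\dots\}$ regardless of how $l_1,\dots,l_m$ are chosen --- this is exactly what neutralises the adversary's freedom in the $l_n$ --- combined with the simple remark that one may work at a modulus $q_i$ coprime to $k$. Everything else is bookkeeping; in particular the quantitative bound $|S(q_i)|\le\lambda^i q_i$ from the construction plays no role here.
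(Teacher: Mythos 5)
Your proof is correct and follows essentially the same route as the paper's: both reduce to a coordinate $i$ with $\gcd(k,q_i)=1$ and exploit that the forced elements $0,-k,\dots,-mk$ (coming from $P(0)=0$ and $P(l_n)\in S(q_i)$) cover $\Z/q_i\Z$ once $m\geq q_i$. The only cosmetic difference is that the paper argues by contradiction via the auxiliary set $A'=\bigcup_{n}T^{P(n)}A$ and "arbitrarily long progressions," whereas you work with complements directly and obtain the slightly stronger conclusion that the set is empty, with the explicit choice $m=q_i$.
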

\begin{proof}
Let
\[A_i' = \bigcup_{n\in \Z} T_i^{P(n)} A_i \quad \text{and} \quad A' = \bigcup_{n\in \Z} T^{P(n)} A \subseteq \prod_{i=1}^\infty A_i'.\]
Let $k$ be a positive integer. For any positive integers $m$ and $i$, and any $l_1,\ldots,l_m \in \Z$ we have that
\[\mu\left(A\cap \bigcap_{n=1}^m T^{P(l_n) - kn}A\right) \leq \mu\left(A' \cap \bigcap_{n=1}^m T^{-kn} A'\right) \leq \mu_i\left(\bigcap_{n=1}^m T_i^{-kn}A_i'\right).\]
Suppose in order to derive a contradiction that for each positive integer $m$ there existed $l_1,\ldots,l_m \in \Z$ for which the left hand side of the above equation was positive. Then for every $i$, the set $A_i'$ would admit arbitrarily long arithmetic progressions with common difference $k$. There exist some (of course many) $i$'s for which $\mathrm{gcd}(k,q_i)=1$, which in particular ensures that the multiples of $k$ generate all of $\Z/ q_i \Z$. It follows that for these $i$'s, the set $A_i'$ can only have arbitrarily long arithmetic progressions of common difference $k$ if $A_i' = \Z / q_i \Z$, but by construction every $i$ satisfies that $\mu_i(A_i')<1$, hence this cannot be.
\end{proof}
\begin{proof}[Proof of Theorem \ref{Theorem: Counter-example to pinned Bog}]
For a set $F\subset \Z^2$ define 
\[\Delta(F): = \{ x+P(y) \, : \, (x,y)\in F\}. \]
We must show that there exists some $E\subset \Z$ with $d^*(E)>0$ satisfying that for every positive integer $k$, there exists some positive integer $m$ such that
\[ \{k,2k,\ldots,mk\} \not \subset \Delta((E-a)\times (E -b)) \quad \text{for every } a,b\in E.\]
So let $T:\Z \acts (X,\mu)$ and $A\subset X$ be as in Proposition \ref{Proposition: No full poly exp} applied to the polynomial $P$. Using Lemma \ref{Lemma: No multiple recurrence for A} and the pointwise ergodic theorem, for almost every point $x\in X$, the set of return times of $x$ to $A$,
\[E_x:=\{n\in \Z \, : \, T^nx\in A\}\]
satisfies the following property.
For every positive integer $k$ there exists some positive integer $m$ for which
\[ E_x \cap (E_x+(k-P(l_1)))\cap \ldots \cap (E_x+(km-P(l_m))) = \emptyset\]
for every $l_1,\ldots,l_m \in \Z$. This implies that
\[ \{k,2k,\ldots,mk\} \not \subset \Delta((E_x-a) \times \Z) \quad \text{for every } a\in E_x.\]
Clearly however we have that
\[\Delta((E_x-a) \times \Z) \supset \Delta((E_x-a)\times(E_x-b))\quad \text{for every } a,b\in E_x\]
and so the result follows.
\end{proof}
\section{\textbf{Appendix}}\label{Section: Appendix}
\begin{pro}
The requirement in Theorem \ref{Theorem: Multidimensional Bog} that no non-trivial linear combination of the components of $P$ be a linear polynomial is necessary.
\end{pro}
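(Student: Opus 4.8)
The plan is to turn a degenerate linear relation among the components of $P$ into an obstruction living on a single copy of $\Z$, and then to realize that obstruction with an explicit positive--density set coming from an irrational rotation. First I would set up the reduction. If the hypothesis of Theorem~\ref{Theorem: Multidimensional Bog} fails, there is a primitive $a=(a_1,\dots,a_d)\in\Z^d$ for which $\ell:=\sum_{i=1}^d a_iP_i$ has degree at most $1$; here $\ell$ is an integer linear form with $\ell(0)=0$, and $\ell(x)=a\cdot P(x)$. Writing $\phi(x):=a\cdot x$, applying the linear functional $w\mapsto a\cdot w$ to every element of $E-E+P(E-E)\subseteq\Z^d$ and using linearity of $\ell$ gives, for every $E\subseteq\Z^d$,
\[ \{\, a\cdot w:\ w\in E-E+P(E-E)\,\}=\phi(E)-\phi(E)+\ell(E)-\ell(E).\]
Since $a$ is primitive, $a\cdot(k\Z^d)=k\Z$, so $k\Z^d\subseteq E-E+P(E-E)$ forces $k\Z\subseteq\phi(E)-\phi(E)+\ell(E)-\ell(E)$. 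Thus it would suffice to exhibit a single $E\subseteq\Z^d$ with $d^*(E)>0$ for which $\phi(E)-\phi(E)+\ell(E)-\ell(E)$ omits a multiple of $k$ for every positive integer $k$; this refutes Theorem~\ref{Theorem: Multidimensional Bog} for $\delta:=d^*(E)$. A concrete $P$ one may keep in mind is $d=2$, $P(x_1,x_2)=(x_1,x_1^2)$, whose first component is linear.

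Next I would build such an $E$. Fix an irrational $\beta$ and, writing $\{t\}$ for the fractional part of $t\in\R$, an interval $I\subseteq[0,1)$ of small length $\eta=\eta(P)$, and set $S:=\{n\in\Z:\{n\beta\}\in I\}$; then $S$ has density $\eta$ and $S-S$ meets only $\{n:\{n\beta\}\in I-I\}$, an arc of length $2\eta$. If $\ell$ is a rational multiple $c\phi$ of $\phi$, then evaluating at a point with $\phi=1$ shows $c\in\Z$, and I would take $E:=\phi^{-1}(S)=\{x:\{(a\cdot x)\beta\}\in I\}$; here $\phi(E)=S$ and $\ell(E)=cS$, so $\phi(E)-\phi(E)+\ell(E)-\ell(E)$ lies inside $\{n:\{n\beta\}\in J\}$ for an arc $J$ of length at most $2(1+|c|)\eta$, and since $a\ne0$, Weyl's criterion gives $d^*(E)=\eta$. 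Otherwise $\phi$ and $\ell$ are $\Q$--linearly independent (so $d\ge 2$), and I would take $E:=\{x:\{\phi(x)\beta\}\in I\ \text{and}\ \{\ell(x)\beta\}\in I\}$; then $\phi(E),\ell(E)\subseteq S$, so $\phi(E)-\phi(E)+\ell(E)-\ell(E)$ lies inside an arc of length at most $4\eta$, and Weyl's criterion applied to $x\mapsto(\{\phi(x)\beta\},\{\ell(x)\beta\})$ (using that $j_1\phi+j_2\ell$ is a nonzero integer linear form for $(j_1,j_2)\ne0$) gives $d^*(E)=\eta^2$. In either case, taking $\eta$ small enough (depending only on $P$) puts $\phi(E)-\phi(E)+\ell(E)-\ell(E)$ inside $\{n\in\Z:\{n\beta\}\in J\}$ for a fixed closed arc $J\subsetneq[0,1)$, while keeping $d^*(E)>0$.

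To conclude, fix any positive integer $k$. Since $k\beta$ is irrational, the sequence $(\{mk\beta\})_{m\ge1}$ is dense in $[0,1)$, so some $mk$ falls outside $J$; then $mk\notin\{n:\{n\beta\}\in J\}$, hence $mk\notin\phi(E)-\phi(E)+\ell(E)-\ell(E)$, and therefore $k\Z^d\not\subseteq E-E+P(E-E)$. Thus the fixed positive--density set $E$ violates the conclusion of Theorem~\ref{Theorem: Multidimensional Bog} for every $k$, which is exactly the necessity of the degree requirement. I expect the main --- though still routine --- obstacle to be the two Weyl equidistribution computations certifying $d^*(E)\in\{\eta,\eta^2\}$, together with the bookkeeping on arc lengths ensuring that $\eta$, and hence $\delta$, is independent of $k$; the genuinely new point is the reduction in the first paragraph, which pushes the whole question onto a single $\Z$.
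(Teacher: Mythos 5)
Your proposal is correct and follows essentially the same route as the paper: project $E-E+P(E-E)$ to $\Z$ via the degenerate linear combination and trap the image in a proper Bohr set of an irrational, which cannot contain $k\Z$ for any $k$. The only difference is cosmetic: the paper simply takes $E=B(\alpha,\varepsilon)^d$, a product of Bohr sets in each coordinate, which makes $d^*(E)>0$ immediate and avoids your case split and Weyl equidistribution computations, while the containment in a small Bohr set follows from the same triangle-inequality bookkeeping.
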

\begin{proof}
Let $P=(P_1,\ldots,P_d) : \Z^d \to \Z^d$ be an integer polynomial with zero constant term and suppose that there exists $\alpha_1,\ldots,\alpha_d\in \Z$ not all zero satisfying that
\[ \sum_{i=1}^d \alpha_i P_i(x_1,\ldots,x_d) = \sum_{i=1}^d \beta_i x_i\]
for some $\beta_1,\ldots,\beta_d \in \Z$. Consider the product set
\[E:= B(\alpha,\varepsilon) \times \ldots \times B(\alpha,\varepsilon) \subset \Z^d\]
where $B(\alpha,\varepsilon)$ is the Bohr set
\[ B(\alpha,\varepsilon):= \{n \in \Z \, : \, n\alpha \in (-\varepsilon,\varepsilon) \: (\mathrm{mod} \,1)\}\]
for some irrational $\alpha$ and some small $\varepsilon>0$.
Since $d^*(E)>0$ then if the theorem holds for the polynomial $P$ there must be some positive integer $k$ such for every $m= (m_1,\ldots,m_d) \in \Z^d$  there exist $x=(x_1,\ldots,x_d)$ and $y=(y_1,\ldots,y_d)$ both in $E-E$ with
\[ km_i = x_i+P_i(y)\quad \text{for each }i=1,\ldots , d.\]
The above equations can be rearranged to read
\[ k\sum_{i=1}^d \alpha_i m_i = \sum_{i=1}^d \alpha_i (P_i(y) + x_i) = \sum_{i=1}^d \left(\beta_i y_i + \alpha_i x_i\right),\]
which in particular implies that
\[ \sum_{i=1}^d  \beta_i(B(\alpha,\varepsilon)-B(\alpha,\varepsilon)) + \alpha_i (B(\alpha,\varepsilon)-B(\alpha,\varepsilon))\]
contains a subgroup. On the other hand the triangle inequality implies that the above set is contained inside the Bohr set
\[B\left(\alpha,2\varepsilon\sum_{i=1}^d \left(\labs{\alpha_i} + \labs{\beta_i}\right)\right)\]
and so cannot contain a subgroup provided that $\varepsilon$ is sufficiently small.
\end{proof}
The following argument is identical to the one presented in \cite{Bu}[Proposition A.2], however we have chosen to include it for the sake of completeness.
\begin{proof}[Proof of Proposition \ref{Prop: Tk ergodic components}]
Let $T: \Z^d \acts(X,\mu)$ be ergodic and consider the collection
\[ \mathcal{C}:= \{C\subset X\, : \, \mu(C)>0 \text{ and } k\Z^d C \subset C\}.\]
Set
\[\kappa:= \inf_{C \in \mathcal{C}} \mu(C).\]
We first show that $\kappa \geq 1/k^d$.
Indeed pick coset representatives $v_1,\ldots, v_{k^d}$ for $k\Z^d$ inside $\Z^d$ and let $C\in \mathcal{C}$. Then the set
\[A:= \bigcup_{i=1}^{k^d} T^{v_i}C\]
is invariant under all of $\Z^d$ so ergodicity implies that $\mu(A)=1$. Since $T$ preserves $\mu$ then
\[1=\mu(A) \leq \sum_{i=1}^{k^d} \mu(T^{v_i}C) = k^d \mu(C)\]
as required.
By definition of $\kappa$ there exists some $C\in \mathcal{C}$ with
\[\kappa \leq \mu(C)< \kappa + \kappa/2.\]
We claim that $k\Z^d$ acts ergodically on $C$, so that actually $\mu(C) = \kappa$. Indeed, if the claim fails then there exists some $C'\subset C$ with $k\Z^dC' \subset C'$ and $\mu(C')\in (0,\mu(C))$. This implies that $C' \in \mathcal{C}$ and so $\mu(C')\in [\kappa,\mu(C))$. However the set $C\setminus C'$ is also $k\Z^d$-invariant and satisfies
\[ \mu(C \setminus C') = \mu(C) - \mu(C') \in \left(0,\frac{\kappa}{2}\right)\]
which contradicts the definition of $\kappa$, proving the claim.
One can then easily check that translates of $C$ by some non-empty subset $J\subset \{v_1,\ldots,v_{k^d}\}$ disjointly cover $X$ up to $\mu$, and $k\Z^d$ acts ergodically on each translate. The result then follows with $\{\mu(\cdot \, | \, T^{v_j}C)\}_{j\in J}$ as the $T^k$-ergodic components of $\mu$.
\end{proof}

\end{document}